\newcommand{\tRe}{\textup{Re }}
\newcommand{\tIm}{\textup{Im }}
\newcommand{\lr}[1]{\left(#1\right)}
\newcommand{\ab}{\mathbf a}
\newcommand{\boldab}{{\bf a}, {\bf b}}
\newcommand{\al}{\boldsymbol \alpha}
\newcommand{\altt}{\boldsymbol \alpha_{2,3}}
\newcommand{\bb}{\boldsymbol \beta}
\newcommand{\bbtt}{\boldsymbol \beta_{2,3}}
\newcommand{\sumstar}{\sideset{}{^*}\sum}
\newcommand{\sumh}{\sideset{}{^h}\sum}
\newcommand{\sumd}{\sideset{}{^d}\sum}
\newcommand{\sumsharp}{\sideset{}{^\#}\sum}
\newcommand{\eps}{\varepsilon}
\newcommand{\es}[1]{\begin{equation}\begin{split}#1\end{split}\end{equation}}
\newcommand{\est}[1]{\begin{equation*}\begin{split}#1\end{split}\end{equation*}}
\newcommand{\Sc}{\mathcal S}
\newcommand{\K}{\mathcal{K}}
\newcommand{\q}{\mathfrak{q}}
\renewcommand{\mod}[1]{~\pr{\textnormal{mod}~#1}}
\newtheorem{thm}{Theorem}[section]
\newtheorem{prop}[thm]{Proposition}
\newtheorem{lem}[thm]{Lemma}
\newtheorem{lemma}[thm]{Lemma}
\newtheorem{conj}[thm]{Conjecture}
\theoremstyle{remark}
\newtheorem{remark}{Remark}
\newtheorem{rem*}{Remark}
\newcommand{\pr}[1]{\left( #1\right)}
\newcommand{\pg}[1]{\left\{ #1\right\}}
\newcommand{\e}[1]{\operatorname{e}\pr{ #1}}
\newcommand{\bfrac}[2]{\left(\frac{#1}{#2}\right)}
\newcommand{\tD}{\tilde{\Delta}}
\newcommand{\Res}{\operatorname*{Res}}
\def\sumstar{\operatornamewithlimits{\sum\nolimits^*}}
\newcommand{\sumtwo}{\operatorname*{\sum\sum}}
\newcommand{\sumthree}{\operatorname*{\sum\sum\sum}}
\newcommand{\sumfour}{\operatorname*{\sum\sum\sum\sum}}
\newcommand{\sumsix}{\operatorname*{\sum\sum\sum\sum\sum\sum}}
\newcommand{\comment}[1]{}
\let\originalleft\left
\let\originalright\right
\renewcommand{\left}{\mathopen{}\mathclose\bgroup\originalleft}
\renewcommand{\right}{\aftergroup\egroup\originalright}
\numberwithin{equation}{section}
\begin{document}
\title{The sixth moment of automorphic $L$-functions}

\date{\today}
\subjclass[2010]{11M41, 11F11, 11F67}
\keywords{moment of $L$-functions, automorphic $L$-functions, $\Gamma_1(q)$}

\author[V. Chandee]{Vorrapan Chandee}
\address{Department of Mathematics \\ Burapha University \\ 169 Long-Hard Bangsaen rd, Saensuk, Mueng, Chonburi, Thailand,  20131}
\email{vorrapan@buu.ac.th }

\author[X. Li]{Xiannan Li}
\address{ \parbox{1\linewidth}{%
		University of Oxford, Andrew Wiles Building, Radcliffe Observatory Quarter, Woodstock Road, Oxford, UK, OX2 6GG \\ %
		Current address: Department of Mathematics, Kansas State University, 138 Cardwell Hall, Manhattan, Kansas, USA, 66506}
	}
\email{xiannan@math.ksu.edu }

\allowdisplaybreaks
\numberwithin{equation}{section}
\selectlanguage{english}
\begin{abstract}
In this paper, we consider the $L$-functions $L(s, f)$ where $f$ is an eigenform for the congruence subgroup $\Gamma_1(q)$.  We prove an asymptotic formula for the sixth moment of this family of automorphic $L$-functions.  
\end{abstract}

\maketitle  

\section{Introduction}   

Moments of $L$-functions are of great interest to analytic number theorists.  For instance, for $\zeta(s)$ denoting the Riemann zeta function and
$$I_k(T) := \int_0^T |\zeta(\tfrac{1}{2} + it)|^{2k}dt,$$  
asymptotic formulae were proven for $k = 1$ by Hardy and Littlewood and for $k=2$ by Ingham (see \cite{Ti} VII).  This work is closely related to zero density results and the distribution of primes in short intervals.  More recently, moments of other families of $L$-functions for studied for their numerous applications, including non-vanishing and subconvexity results.  In many applications, it is important to develop technology which can understand such moments for larger $k$.

The behavior of moments for larger $k$ remain mysterious.  However, recently there has been great progress in our understanding.  First, good heuristics and conjectures on the behavior of $I_k(T)$ appeared in the literature.  To be precise, a folklore conjecture states that  
$$I_k(T) \sim c_k T(\log T)^{k^2}$$ for constants $c_k$ depending on $k$ but the values of $c_k$ were unknown for general $k$ until the work of Keating and Snaith \cite{KS} which related these moments to circular unitary ensembles and provided precise conjectures for $c_k$.  The choice of group is consistent with the Katz-Sarnak philosophy \cite{KaSa}, which indicates that the symmetry group associated to this family should be unitary.  Based on heuristics for shifted divisor sums, Conrey and Ghosh derived a conjecture in the case $k=3$ \cite{CGh} and Conrey and Gonek derived a conjecture in the case $k=4$ \cite{CGo}. 
In particular, the conjecture for the sixth moment is 
$$ I_3(T) \sim 42 a_3 \frac{T(\log T)^9}{9!}$$
for some arithmetic factor $a_3.$ 
Further conjectures including lower order terms, and for other symmetry groups are available from the work of Conrey, Farmer, Keating, Rubinstein and Snaith \cite{CFKRS} as well as from the work of Diaconu, Goldfeld and Hoffstein \cite{DGH}. 

In support of these conjectures, lower bounds of the the right order of magnitude are available due to Rudnick and Soundararajan \cite{Lower}, while good upper bounds of the right order of magnitude are available conditionally on RH, due to Soundararajan \cite{Sound} and later improved by Harper \cite{Har}.

Despite this, verifications of the moment conjectures for high moments remain elusive.  Typically, even going slightly beyong the fourth moment to obtain a twisted fourth moment is quite difficult, and there are few families for which this is known.  

Quite recently, Conrey, Iwaniec and Soundararajan \cite{CIS} derived an asymptotic formula for the sixth moment of Dirichlet $L$-functions with a power saving error term.  Instead of fixing the modulus $q$ and only averaging over primitive characters $\chi \bmod q$, they also average over the modulus $q \leq Q$, which gives them a larger family of size $Q^2$.  Further, they include a short average on the critical line. In particular, they showed that 
\est{\sum_{q \leq Q} \sumstar_{\chi \mod q} \int_{-\infty}^{\infty} \left| L\left( \tfrac 12 + it, \chi \right)\right|^6\left| \Gamma\left( \tfrac 14 + \tfrac{it}{2} \right)\right|^6 \> dt \sim 42 b_3  \frac{Q^2(\log Q)^9}{9!} \int_{-\infty}^{\infty} \left| \Gamma\left( \tfrac 14 + \tfrac{it}{2} \right)\right|^6 \> dt }
for some constant $b_3.$ This is consistent with the analogous conjecture for the Riemann zeta function above. 

  The authors of this paper subsequently derived an asymptotic formula for the eight moment of this family of $L$-functions, conditionally on GRH \cite{CL}, which is 
  \est{\sum_{q \leq Q} \sumstar_{\chi \mod q} \int_{-\infty}^{\infty} \left| L\left( \tfrac 12 + it, \chi \right)\right|^8\left| \Gamma\left( \tfrac 14 + \tfrac{it}{2} \right)\right|^8 \> dt \sim 24024 b_4  \frac{Q^2(\log Q)^{16}}{16!} \int_{-\infty}^{\infty} \left| \Gamma\left( \tfrac 14 + \tfrac{it}{2} \right)\right|^8 \> dt }
  for some constant $b_4.$

In this paper, we study a family of $L$-functions attached to automorphic forms on $GL(2)$.  To be more precise, let $S_k(\Gamma_0(q), \chi)$ be the space of cusp forms of weight $k \ge 2$ for the group $\Gamma_0(q)$ and the nebentypus character $\chi \mod q$,  where $$\Gamma_0(q) = \left\{ \left( \left.{\begin{array}{cc}
   a & b \\
   c & d \\
  \end{array} } \right)  \ \right|  \  ad- bc = 1 , \ \  c \equiv 0 \mod q \right\}.$$ 
Also, let $S_k(\Gamma_1(q))$ be the space of holomorphic cusp forms for the group 
$$\Gamma_1(q) = \left\{ \left( \left.{\begin{array}{cc}
   a & b \\
   c & d \\
  \end{array} } \right)  \ \right|  \  ad- bc = 1 , \ \  c \equiv 0 \mod q, \ \ \ a \equiv d \equiv 1 \mod q \right\}.$$
Note that $S_k(\Gamma_1(q))$ is a Hilbert space with the Petersson's inner product
$$ <f, g> = \int_{\Gamma_1(q) \backslash \mathbb H} f(z)\bar{g}(z) y^{k-2} \> dx \> dy,$$
and
$$ S_k(\Gamma_1(q)) = \bigoplus_{\chi \mod q} S_k(\Gamma_0(q), \chi).$$  

Let $\mathcal H_\chi \subset  S_k(\Gamma_0(q), \chi)$ be an orthogonal basis of $ S_k(\Gamma_0(q), \chi)$ consisting of Hecke cusp forms, normalized so that the first Fourier coefficient is $1$. For each $f \in \mathcal H_\chi $, we let $L(f, s)$ be the $L$-function associated to $f$, defined for $\tRe(s) > 1$ as 
\es{\label{def:Lfnc} 
L(f,s) = \sum_{n \geq 1} \frac {\lambda_f(n)}{n^{s}} = \prod_p \pr{1 - \frac{\lambda_f(p)}{p^s} + \frac{\chi(p)}{p^{2s}}}^{-1},}
where $\{\lambda_f(n)\}$ are the Hecke eigenvalues of $f$.  With our normalization, $\lambda_f(1) = 1 $.  In general, the Hecke eigenvalues satisfy the Hecke relation
\es{\label{eqn:Heckerelation}
\lambda_f(m) \lambda_f(n) = \sum_{d|(m,n)} \chi(d) \lambda_f\pr{\frac{mn}{d^2}},} 
for all $m,n \geq 1$. 
We define the completed $L$-function as
\es{\label{def:completedLfnc}\Lambda\pr{f, \tfrac 12 + s} = \pr{\frac q{4\pi^2}}^{\frac s2} \Gamma \pr{s + \frac k2} L\pr{f, \tfrac 12 + s},}
which satisfies the functional equation
\est{\Lambda\pr{f, \tfrac 12 + s} = i^k \overline{\eta}_f\Lambda\pr{\bar{f}, \tfrac 12 - s},}
where $|\eta_f| = 1$ when $f$ is a newform.  

Suppose for each $f \in \mathcal H_\chi$, we have an associated number $\alpha_f$. Then we define the harmonic average of $\alpha_f$ over $\mathcal H_\chi$ to be 
\est{\sumh_{f \in \mathcal H_\chi} \alpha_f = \frac{\Gamma(k-1)}{(4\pi)^{k-1}}\sum_{f \in \mathcal H_\chi} \frac{\alpha_f}{\|f\|^2}.} 

We note that when the first coefficient $\lambda_f(1) = 1$, $\|f\|^2$ is essentially the value of a certain $L$-function at $1$, and so on average, $\|f\|^2$ is constant.  As in other works, it is possible to remove the weighting by $\|f\|^2$ through what is now a standard argument.

We shall be interested in moments of the form 

\est{\frac{2}{\phi(q)}\sum_{\substack{\chi \mod q \\ \chi(-1) = (-1)^k}} \sumh_{f \in \mathcal H_\chi} |L(f, 1/2)|^{2k}.}

 We note that the size of the family is around size $q^2$. For prime level, $\eta_f$ can be expressed in terms of Gauss sums, and in particular we expect $\eta_f$ to equidistribute on the circle as $f$ varies over an orthogonal basis of $S_k(\Gamma_1(q))$.  Thus, we expect our family of $L$-functions to be unitary.

In this paper, we prove an asymptotic formula for the sixth moment - this will be the first time that the sixth moment of a family of $L$-functions over $GL(2)$ has been understood.  Following \cite{CFKRS}, we have the following conjecture for the sixth moment of our family. We refer the reader to Appendix \ref{sec:Eulerproduct} for a brief derivation of the arithmetic factor in the conjecture.
\begin{conj} \label{conj:CFKRSnoshift}Let $q$ be a prime number.
As $q \rightarrow \infty$, we have
\est{\frac{2}{\phi(q)}\sum_{\substack{\chi \mod q \\ \chi(-1) = (-1)^k}} \sumh_{f \in \mathcal H_\chi} |L(f, 1/2)|^6 \sim 42 \mathscr C_3  \left(1- \frac 1q\right)^{4} \left(1 + \frac 4q + \frac {1}{q^2} \right) C_q^{-1} \frac{(\log q)^9}{9!},}
where 
\es{\label{def:c3} 
	\mathscr C_3 := \prod_{p} C_p,  \ \ \  \textrm{and,} \ \ \ C_p:= \left( 1 + \frac 4p + \frac 7{p^2} - \frac 2{p^3} + \frac 9{p^4} - \frac{16}{p^5} + \frac{1}{p^6} - \frac{4}{p^7}\right)\left( 1 + \frac 1{p}\right)^{-4}.}
\end{conj}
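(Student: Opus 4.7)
My plan is to follow the template of Conrey--Iwaniec--Soundararajan \cite{CIS} and of \cite{CL}, suitably modified for the $GL(2)$ setting. I would begin with a smoothed approximate functional equation for $L(f,1/2)^3$ and its conjugate, each of length $\asymp q^{3/2}$. Expanding the cubes and collapsing triple products of Hecke eigenvalues $\lambda_f(m_1)\lambda_f(m_2)\lambda_f(m_3)$ into single eigenvalues via \eqref{eqn:Heckerelation}, one obtains
\est{|L(f,1/2)|^6 \approx 2\tRe \sumtwo_{m,n} \frac{\alpha_\chi(m)\overline{\alpha_\chi(n)}\lambda_f(m)\overline{\lambda_f(n)}}{\sqrt{mn}} V_k\pr{\frac{mn}{q^3}},}
where the coefficients $\alpha_\chi(m)$ are $\chi$-twisted threefold divisor-like quantities and $V_k$ is a smooth cutoff encoding the archimedean factors. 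Averaging over $\chi \bmod q$ of the correct parity and over the Hecke basis $\mathcal H_\chi$, I would invoke the Petersson trace formula for $\Gamma_1(q)$,
\est{\frac{2}{\phi(q)}\sum_{\substack{\chi \bmod q\\ \chi(-1)=(-1)^k}} \sumh_{f \in \mathcal H_\chi} \lambda_f(m)\overline{\lambda_f(n)} = \delta_{m=n} + 2\pi i^{-k}\sum_{q\,|\,c} \frac{S(m,n;c)}{c} J_{k-1}\pr{\frac{4\pi\sqrt{mn}}{c}},}
splitting the moment into a diagonal and an off-diagonal piece.

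\textbf{Main term extraction.} The diagonal reduces to a sum of the shape $\sum_m |\alpha_\chi(m)|^2 m^{-1} V_k(m^2/q^3)$. Opening $V_k$ by its Mellin transform and factoring the resulting Dirichlet series as $\zeta(s)^9$ times a convergent Euler product, a contour shift past the order-nine pole at $s=0$ produces a polynomial of degree $9$ in $\log q$. This yields just one of the $\binom{6}{3}=20$ permutation contributions in the CFKRS recipe, corresponding to the ``identity'' matching of shifts. The remaining $19$ permutation terms must come from the off-diagonal. Writing $c = qc'$, I would open the Kloosterman sum into additive characters and apply Poisson summation in $n$ (and possibly in $m$ after further rearrangement) to transform the sum into a dual version of the same shape. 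The bounded range of $c'$, together with the exceptional modulus $c'=1$ where $S(m,n;q)$ degenerates into a Ramanujan-type sum that must be treated separately, should recover the remaining $19$ main terms; their combined Euler products at each prime $p \neq q$ collapse into the local factor $C_p$ of \eqref{def:c3}, while the local computation at $p = q$ produces the factor $(1-1/q)^4(1+4/q+1/q^2) C_q^{-1}$, and the combinatorial count across all permutations yields $42$.

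\textbf{Main obstacle.} The principal difficulty will be the off-diagonal: the twisted coefficients $\alpha_\chi$ are not pure divisor functions, and the Poisson-transformed sums involve oscillatory kernels whose stationary phase analysis is delicate, especially where the transition range of the Bessel transform meets the transition range of the weight $V_k$. One must carefully match the putative main terms from the intermediate range of $c'$ with the CFKRS prediction, verifying prime-by-prime that the arithmetic factor $\mathscr C_3$ emerges and that the host of spurious lower-order polar terms introduced at intermediate stages cancel out. The large-$c'$ tail, where genuine cancellation is required, I would bound using the Weil bound $|S(m,n;qc')| \ll (mn,qc')^{1/2}(qc')^{1/2+\eps}$ combined with oscillation estimates for $J_{k-1}$, possibly reinforced by spectral large-sieve or Kuznetsov-type inputs if a straightforward absolute-value bound proves insufficient. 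The bookkeeping needed to keep all the ingredients aligned, and in particular to track the $q$-adic local factor precisely, will likely be the most technically demanding part of the argument.
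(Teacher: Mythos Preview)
The statement you are proposing to prove is labeled a \emph{conjecture} in the paper, and the paper does not prove it.  What the paper establishes is Theorem~\ref{cor:main}, which differs from Conjecture~\ref{conj:CFKRSnoshift} by the insertion of a short average $\int_{-\infty}^\infty |\Lambda(f,\tfrac12+it)|^6\,dt$ along the critical line.  The authors state explicitly (see Remark~\ref{rem:boundV}) that without this $t$-integration the approximate functional equation only localizes the product $mn \ll q^{3+\eps}$, not $m$ and $n$ separately to $\ll q^{3/2+\eps}$, and that ``the proof presented here does not extend to this range.''  Your setup inherits exactly this defect: the cutoff $V_k(mn/q^3)$ allows unbalanced pairs such as $m\asymp q^{3-\delta}$, $n\asymp q^\delta$, and for these the off-diagonal analysis you sketch collapses, since the dual sums after any summation formula are no longer short enough to be absorbed as error.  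This is the central obstruction, and your proposal does not address it.

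There are also two structural mismatches with the paper's actual argument for the $t$-averaged theorem.  First, the coefficients attached to $m,n$ are threefold divisor sums $\sigma_3(\cdot;\al)$, so the correct transform is the $GL(3)$ Voronoi summation for $\sigma_3$ (Appendix~\ref{sec:voronoi}), not Poisson; the off-diagonal main terms arise from the \emph{residue} terms in that Voronoi formula, not from small or exceptional moduli $c'$.  Second, the decisive conductor-lowering step is not ``bounded $c'$'' but rather the observation that the $\chi$-average forces a congruence $a\equiv \overline{a_1b_1}a_2b_2 \pmod q$ inside the Kloosterman sum, after which Chinese remainder and reciprocity reduce the effective modulus from $cq$ to $c$ (Section~\ref{sec:treatmentexp}); only then does Voronoi produce dual sums of manageable length.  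Your outline gestures at the right family-size heuristic but misses both the mechanism and the specific tool, and in any case cannot close the gap left by the unbalanced sums without the $t$-average.
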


Iwaniec and Xiaoqing Li proved a large sieve result for this family in \cite{IL}, and Djankovic used their result to prove \cite{Dj} that for an odd integer $k 
\geq 3$ and prime $q$ that
\begin{equation*}
\frac{2}{\phi(q)}\sum_{\substack{\chi \mod q \\ \chi(-1) = -1}} \sumh_{f \in \mathcal H_\chi} |L(f, 1/2)|^6   \ll q^{\eps}. 
\end{equation*}
as $q \rightarrow \infty.$  In this paper, we shall prove the following

\begin{thm} \label{cor:main} Let $q$ be a prime and $k\geq 5$ be odd.  Then, as $q \rightarrow \infty$, we have

\est{\frac{2}{\phi(q)}\sum_{\substack{\chi \mod q \\ \chi(-1) = -1}} & \sumh_{f \in \mathcal H_\chi} \int_{-\infty}^{\infty} \left|\Lambda\left(f, \tfrac 12 + it\right)\right|^6 \> dt \\
	&\sim 42 \mathscr C_3 \left(1- \frac 1q\right)^{4} \left(1 + \frac 4q + \frac {1}{q^2} \right) C_q^{-1}  \frac{(\log q)^9}{9!} \int_{-\infty}^{\infty} \left|\Gamma\left( \tfrac k2 + it \right) \right|^6 \> dt,}
where $\mathscr C_3$ and $C_p$ are defined in (\ref{def:c3}).

\end{thm}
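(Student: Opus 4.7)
The plan is to follow the strategy of Conrey--Iwaniec--Soundararajan~\cite{CIS} for the sixth moment of Dirichlet $L$-functions, with the Petersson trace formula replacing orthogonality of characters as the main averaging tool over the $GL(2)$ family. First I would apply a shifted approximate functional equation for $\Lambda(f,\tfrac12+\alpha_1+it)\Lambda(f,\tfrac12+\alpha_2+it)\Lambda(f,\tfrac12+\alpha_3+it)$ and its conjugate, so as to express the integrand as a smoothly weighted sum of $\lambda_f(m_1)\lambda_f(m_2)\lambda_f(m_3)\overline{\lambda_f(n_1)\lambda_f(n_2)\lambda_f(n_3)}$ with $m_1m_2m_3,\,n_1n_2n_3 \lesssim q^3$; the shifts are a convenience for applying the recipe of \cite{CFKRS} and are sent to zero at the end. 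Using the Hecke relation \eqref{eqn:Heckerelation} twice within each triple product collapses the six-fold arithmetic sum to $\lambda_f(M)\overline{\lambda_f(N)}$ weighted by divisor-type coefficients involving the nebentypus $\chi$.

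Next, the Petersson trace formula gives
\[
\sumh_{f\in\mathcal H_\chi} \lambda_f(M)\overline{\lambda_f(N)} = \delta_{M=N} + 2\pi i^{-k}\sum_{q\mid c}\frac{S_\chi(M,N;c)}{c}\,J_{k-1}\!\left(\frac{4\pi\sqrt{MN}}{c}\right).
\]
Interchanging the order of summation so that the character average over $\chi\bmod q$ with $\chi(-1)=-1$ is performed first, the orthogonality relation $\sum_\chi \chi(d) = \phi(q)\mathbf{1}_{d\equiv 1\,(q)}$ (restricted to odd characters) simplifies the divisor coefficients, and the $\chi$-twisted Kloosterman sums modulo $c = qc'$ reduce to classical Kloosterman sums with explicit congruence conditions. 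The diagonal contribution $\delta_{M=N}$ then feeds into a multiple Dirichlet series with six $\zeta$-factors times an arithmetic Euler product. Standard contour-shift and residue arguments as in \cite{CFKRS}, combined with the Euler-product computation sketched in Appendix~\ref{sec:Eulerproduct}, identify the leading pole and yield the main term $42\mathscr C_3(1-1/q)^4(1+4/q+q^{-2})C_q^{-1}(\log q)^9/9!$ times the outer Gamma integral $\int |\Gamma(k/2+it)|^6\,dt$.

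The main obstacle is the off-diagonal Kloosterman contribution, which must be shown to be smaller than the main term in size. For this I would apply Voronoi summation in the $M$ variable (and Poisson summation in $N$), turning the sums $S_\chi(M,N;c)$ into new Kloosterman fractions whose oscillation can be exploited via bilinear-form estimates and spectral large sieve inequalities. The Iwaniec--Xiaoqing Li large sieve \cite{IL}, tailored exactly to $\Gamma_1(q)$ and the engine behind Djankovic's upper bound \cite{Dj}, is essential here. The condition $k\geq 5$ ensures good decay of $J_{k-1}$ near zero, while the smoothing from the outer $t$-integral against $|\Gamma(k/2+it)|^6$ supplies a crucial extra average. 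The most delicate part of the argument is the combinatorial bookkeeping of the six-fold divisor sums, carefully controlling the parity signs coming from $\chi(-1)=-1$, and extracting a power-saving in $q$ from the large-sieve estimates that dominates the main term; this is where all the technical work will be concentrated.
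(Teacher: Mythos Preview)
Your plan has a genuine structural gap: the off-diagonal Kloosterman contribution is \emph{not} an error term, and the diagonal alone does \emph{not} produce the constant $42$. In the shifted framework, the diagonal yields only the single term $\mathcal M(q;\al,\bb)$ out of the twenty $S_6/(S_3\times S_3)$ permutations in the CFKRS prediction (cf.\ Lemma~\ref{lem:diagonal}); the remaining eighteen permutation terms must come from the off-diagonal, and each $\mathcal M(q;\pi(\al),\pi(\bb))$ is individually singular as the shifts coalesce. Only the full sum of twenty terms is holomorphic at $\al=\bb=0$ and equals $42\,\mathscr C_3\cdots(\log q)^9/9!$. Consequently the off-diagonal is of size $(\log q)^9$, and no large-sieve argument (which is what drives Djankovi\'c's $q^\eps$ upper bound) can show it is $o((\log q)^9)$. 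Your assertion that the diagonal feeds a Dirichlet series with ``six $\zeta$-factors'' is also off: it has nine, namely $\prod_{i,j}\zeta(1+\alpha_i-\beta_j)$.

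What the paper actually does with the off-diagonal is quite different from a large-sieve bound. The character average over $\chi\bmod q$ is used not merely to simplify coefficients but to \emph{drop the conductor} of the exponential sum from $cq$ to $c$ via reciprocity (Section~\ref{sec:treatmentexp}); this is the analogue of the complementary-divisor trick in \cite{CIS} and is the key structural input. After this, Voronoi summation is applied in \emph{both} the $m$ and $n$ variables; the residue terms from Voronoi (Proposition~\ref{prop:mainTM}) are precisely what furnish the missing eighteen main terms, while the dual sums are short enough (thanks to the conductor drop) to be bounded directly, without any spectral large sieve. The Iwaniec--Li input used here is their Bessel-sum identity (Lemma~\ref{lem:sumT}), not their large sieve. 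So the core of the argument---extracting main terms from the off-diagonal via conductor reduction and Voronoi residues---is absent from your outline, and the proposed large-sieve endgame would fail.
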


In fact, we are able to prove this with an error term of $q^{-1/4}$, as opposed to the $q^{-1/10}$ error term in the work of Conrey, Iwaniec and Soundararajan\cite{CIS}.  The reason behind this superior error term is explained in the outline in Section \ref{subsec:outline}.  In future work, we hope to extend our attention to the eighth momment.  

The assumption that $k$ is odd implies that all $f\in \mathcal H_\chi$ are newforms.  This is for convenience only and is not difficult to remove.  Indeed, when $k$ is even, all $f \in \mathcal H_\chi$ are newforms except possibly when $\chi$ is the principal character and $f$ is induced by a cusp form of full level.  We avoid this case for the sake of brevity.  Similarly, the assumption that $k\geq 5$ simplifies parts of the calculation; it is possible to prove Theorem \ref{cor:main} for smaller $k$.

Since the $\Gamma$ function decays rapidly on vertical lines, the average over $t$ is fairly short.  It is included for the same reason as in the works \cite{CIS} and \cite{CL} in that it allows us to avoid certain unbalanced sums in the computation of the moment.  Although this appears to be a small technical change in the main statement, evaluating such moments without the short integration over $t$ is a significant challenge.  Our Theorem will follow from the more general Theorem \ref{thm:mainmoment} for shifted moments in Section \ref{sec:shift}.

\subsection{Outline of paper} \label{subsec:outline}
To help orient the reader, we provide a sketch of the proof, and introduce the various sections of the paper.  After applying the approximate functional equation developed in Section \ref{sec:prelem}, the main object to be understood is roughly of the form
$$\frac{2}{\phi(q)}\sum_{\substack{\chi \mod q \\ \chi(-1) = (-1)^k}} {\sum_{f \in \mathcal H_\chi}}^h  \sum_{m, n \asymp q^{3/2}} \frac{\sigma_3(m)\sigma_3(n)\lambda_f(n)\overline{\lambda_f}(m)}{\sqrt{mn}}.
$$In fact, since the coefficients $\lambda_f(n)$ are not completely multiplicative, the expression is significantly more complicated for the purpose of extracting main terms.  

Applying Peterson's formula for the average over $f\in \mathcal H_\chi$ leads to diagonal terms $m=n$ which are evaluated fairly easily in Section \ref{sec:diag} as well as off-diagonal terms which involve sums of the form
$$ \sum_{m, n \asymp q^{3/2}} \frac{\sigma_3(m)\sigma_3(n)}{\sqrt{mn}} \frac{2}{\phi(q)}\sum_{\substack{\chi \mod q \\ \chi(-1) = (-1)^k}} \sum_c S_{\chi}(m, n; cq) J_{k-1}\bfrac{4\pi \sqrt{mn}}{cq},
$$ 
where $S_{\chi}(m, n; cq)$ is the Kloosterman sum defined in (\ref{def:Kloosterman}), and $J_{k-1}(x)$ is the J-Bessel function of order $k-1$.

Let us focus on the transition region for the Bessel function where $c \asymp q^{1/2}$, so that the conductor is a priori of size $qc \asymp q^{3/2}$.  It is here that the addition average over $\chi \bmod q$ comes into play.  To be more precise, to understand the exponential sum $$\frac{2}{\phi(q)}\sum_{\substack{\chi \mod q \\ \chi(-1) = (-1)^k}} S_{\chi}(m, n; cq),$$ it suffices to understand
$$\sumstar_{\substack{a \bmod cq\\a \equiv 1 \bmod q}} e\bfrac{am + \bar an}{cq},
$$which, assuming that $(c, q) = 1$, is
$$e \bfrac{m+n}{cq}\sumstar_{a \bmod c} e\bfrac{\bar q(a-1)m + \bar q (\bar a - 1)n}{c},
$$using Chinese remainder theorem and reciprocity.  The factor $e \bfrac{m+n}{cq}$ has small derivatives and may be treated as a smooth function, while the conductor of the rest of the exponential sum has decreased to $c \asymp q^{1/2}$.  The details of these calculations are in Section \ref{sec:offdiagsetup}.  

This phenomenon of the drop in conductor appears in other examples.  In the case of the sixth moment of Dirichlet $L$-functions in \cite{CIS}, it occurs when replacing $q$ with the complementary divisor $\frac{m-n}{q} \asymp q^{1/2}$.  It is quite interesting that the same drop in conductor occurs by seemingly very different mechanisms.  However, note that when the complementary divisor is small, the ordered pair $(m, n)$ is forced to be in a narrow region.  That this does not occur in our case is one of reasons behind the superior error term in our result; the assumption that $q$ is prime also plays a role.

After the conductor drop, we apply Voronoi summation to the sum over $m$ and $n$ in \S \ref{sec:appvoronoi}.  We need a version of Voronoi summation including shifts.  The proof of this is essentially the same as the proof of the standard Voronoi summation formula for $\sigma_3(n)$ by Ivic \cite{Ivic}.  We state the result required in Appendix \ref{sec:voronoi}.  

After applying Voronoi, it is easy to guess which terms should contribute to the main terms and which terms should be error terms.  The main terms are described in Proposition \ref{prop:mainTM} and the error terms are bounded in Proposition \ref{prop:Terror}.  Essentially, we expect the main terms to be a sum of products of $9$ factors of $\zeta$, the same as the diagonal contribution but with permutations in the shifts, as in Theorem \ref{thm:mainmoment}.  This is by no means immediately visible from the expression in Proposition \ref{prop:mainTM}.  Indeed, it takes some effort to see that we get the right number of $\zeta$ factors.  Along the way, we use, among other things, a calculation of Iwaniec and Xiaoqing Li in \cite{IL}.  This is done in Section \ref{sec:provepropTM}.  In order to finish the verifications, we need to check that the local factors of two expressions agree.  The details here are standard but intricate, and are provided in Appendix \ref{sec:Eulerverif}.  

Finally, the error terms from Voronoi summation are bounded in Section \ref{sec:properror}.  Here, one needs to show that the dual sums from Voronoi summation are essentially quite short, which is related to the reduction in conductor from $cq$ to $c$ earlier.

\section{Notation and the shifted sixth moment} \label{sec:shift}
We begin with some notation. Let $\al := (\alpha_1, \alpha_2, \alpha_3)$ and  $\bb := (\beta_1, \beta_2, \beta_3)$.  For a complex number $s$, we shall write $\al + s := (\alpha_1 + s, \alpha_2 + s, \alpha_3 + s).$  We define
\es{\label{def:deltaalphabeta}
\delta(\al, \bb) := \frac 12 \sum_{j = 1}^3 (\alpha_j - \beta_j),}
\es{\label{def:GprodGamma}
G(s; \al, \bb) := \prod_{j = 1}^3 \Gamma \pr{s + \tfrac {k-1}2 + \alpha_j}\Gamma \pr{s + \tfrac {k-1}2 - \beta_j}, }and
\es{\label{def:completedSprodLfnc}
\Lambda(f, s; \al, \bb) : = \prod_{j = 1}^3 \Lambda\pr{f, s + \alpha_j}\Lambda \pr{\bar{f}, s - \beta_j}.}
Note that we have
\es{\label{def:prodCompletedLfnc}
\Lambda(f; \al, \bb) = \Lambda\pr{f, \tfrac 12; \al, \bb} = \pr{\frac q{4\pi^2}}^{\delta(\al, \bb)} G\pr{\tfrac 12; \al, \bb} \prod_{j = 1}^3 L\pr{f, \tfrac 12 + \alpha_j}L\pr{\bar{f}, \tfrac 12 - \beta_j}.}
We define the shifted $k$-divisor function by
\es{\label{def:sigma_k} \sigma_k(n; \alpha_1,..., \alpha_k) = \sum_{n_1n_2...n_k = n} n_1^{-\alpha_1}n_2^{-\alpha_2}...n_k^{-\alpha_k}. }
Let 
\es  { \label{eqn:defB}\mathscr B(a, b; \al) := \frac{\mu(a)\sigma_3(b; \alpha_1 + \alpha_2, \alpha_2 + \alpha_3, \alpha_3 + \alpha_1)}{a^{\alpha_1 + \alpha_2 + \alpha_3}}.}
Next we need the following lemmas, which help us generate the conjecture of the sixth moment, namely
\es{ \label{momentS1q}\frac{2}{\phi(q)}\sum_{\substack{\chi \mod q \\ \chi(-1) = (-1)^k}} \sumh_{f \in \mathcal H_\chi} \Lambda\pr{f; \al , \bb }. }

\begin{lemma} \label{lem:multofsigma_k}
	 We have
	\est{\sigma_2(n_1n_2; \alpha_1, \alpha_2) = \sum_{d| (n_1, n_2)} \mu(d) d^{-\alpha_1 - \alpha_2} \sigma_2 \pr{\frac{n_1}{d}; \alpha_1, \alpha_2}\sigma_2\pr{\frac{n_2}{d}; \alpha_1, \alpha_2}.}
\end{lemma}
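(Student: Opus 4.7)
The plan is to exploit the multiplicative structure of both sides and reduce to a polynomial identity at prime powers. First, I would observe that the shifted divisor function $\sigma_2(\cdot; \alpha_1, \alpha_2)$ defined in (\ref{def:sigma_k}) is multiplicative, as follows immediately from factoring $n = ab$ into coprime parts. Consequently, viewing both sides of the claimed identity as functions of the pair $(n_1, n_2)$, each side is jointly multiplicative: if $n_1 = n_1' n_1''$ and $n_2 = n_2' n_2''$ with $(n_1' n_2', n_1'' n_2'') = 1$, then each side factors into the corresponding expressions for $(n_1', n_2')$ and $(n_1'', n_2'')$. For the right-hand side this uses that divisors $d$ of $(n_1, n_2)$ decompose uniquely as $d = d' d''$ with $d' \mid (n_1', n_2')$ and $d'' \mid (n_1'', n_2'')$, combined with the complete multiplicativity of $d \mapsto d^{-\alpha_1-\alpha_2}$ and the multiplicativity of $\mu$.

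Thus it suffices to prove the identity when $n_1 = p^a$ and $n_2 = p^b$; assume without loss that $a \le b$. Since $\mu(p^j) = 0$ for $j \ge 2$, the sum on the right collapses to at most two terms, $d = 1$ and $d = p$ (the latter only when $a \ge 1$). Setting $x = p^{-\alpha_1}$ and $y = p^{-\alpha_2}$, one has
\[
\sigma_2(p^m; \alpha_1, \alpha_2) \;=\; \sum_{j=0}^{m} x^j y^{m-j} \;=\; \frac{x^{m+1} - y^{m+1}}{x - y},
\]
so the claim becomes the polynomial identity
\[
(x^{a+b+1} - y^{a+b+1})(x-y) \;=\; (x^{a+1} - y^{a+1})(x^{b+1} - y^{b+1}) \;-\; xy\,(x^a - y^a)(x^b - y^b),
\]
which I would verify by straightforward expansion; both sides equal $x^{a+b+2} - x^{a+b+1}y - x y^{a+b+1} + y^{a+b+2}$. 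The boundary case $a = 0$ is trivial, since then only $d = 1$ contributes on the right and $\sigma_2(1;\alpha_1,\alpha_2) = 1$.

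There is no real obstacle; the argument is purely formal. A useful sanity check is to specialize to $\alpha_1 = \alpha_2 = 0$, which recovers the classical identity $d(n_1 n_2) = \sum_{d \mid (n_1, n_2)} \mu(d)\, d(n_1/d)\, d(n_2/d)$. The shifted generalization differs only by the weight $d^{-\alpha_1 - \alpha_2}$, which is exactly what is needed so that the prime-power identity remains a consequence of the algebraic identity above.
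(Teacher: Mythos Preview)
Your proof is correct and follows essentially the same approach as the paper: reduce by joint multiplicativity to prime powers $n_1=p^a$, $n_2=p^b$, use that only $d=1$ and $d=p$ survive in the M\"obius sum, and then verify the resulting identity via the closed form $\sigma_2(p^m;\alpha_1,\alpha_2)=(x^{m+1}-y^{m+1})/(x-y)$ with $x=p^{-\alpha_1}$, $y=p^{-\alpha_2}$. The paper's proof is the same computation written without the substitution to $x,y$.
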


\begin{proof}
	Since both sides are multiplicative functions, it is enough to prove the Lemma when $n_1n_2$ is a prime power. We set $n_1 = p^a$ and $n_2 = p^b,$ where $1 \leq a \leq b.$  Then
	\es{ \label{eqn:idprimesigma}\sigma_2(p^{a}p^b; \alpha_1, \alpha_2) &= \sum_{0 \leq k \leq a} \mu(p^k) p^{-k(\alpha_1 +\alpha_2)} \sigma_2 \pr{p^{a-k}; \alpha_1, \alpha_2}\sigma_2\pr{p^{b-k}; \alpha_1, \alpha_2} \\
		&=   \sigma_2 \pr{p^{a}; \alpha_1, \alpha_2}\sigma_2\pr{p^{b}; \alpha_1, \alpha_2} - p^{-(\alpha_1 +\alpha_2)} \sigma_2 \pr{p^{a-1}; \alpha_1, \alpha_2}\sigma_2\pr{p^{b-1}; \alpha_1, \alpha_2}.}
	On the other hand, 
	\est{\sigma_2 (p^{n}; \alpha_1, \alpha_2) = \sum_{\ell = 0}^n p^{-\ell\alpha_1} p^{-(n-\ell)\alpha_2} = p^{-n\alpha_2} \frac{1 - \frac{1}{p^{(n+1)(\alpha_1 - \alpha_2)}}}{1 - \frac{1}{p^{\alpha_1 - \alpha_2}}}, }
	and the lemma follows by substituting the above formula into (\ref{eqn:idprimesigma}).
	
\end{proof}
We write the product of $L$-functions in term of Dirichlet series in the following lemma

\begin{lem} \label{eqn:prod3Lfnc} Let $L(f, w)$ be an $L$-function in $\mathcal H_\chi.$  For $\tRe(s + \alpha_i) > 1$, we have
	\est{  L\pr{f, s + \alpha_1}&L\pr{f, s +  \alpha_2}L\pr{f,  s+ \alpha_3} = \sumtwo_{a, b \geq 1} \frac{\chi(ab)\mathscr B(a, b; \al)}{(ab)^{ 2s}}   \sum_{n \geq 1} \frac{\lambda_f(an) \sigma_3(n;\al)}{(an)^{ s}}.}
\end{lem}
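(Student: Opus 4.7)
The plan is to expand the product $\prod_j L(f,s+\alpha_j)$ as a triple Dirichlet sum in the Hecke eigenvalues and collapse it to a single $\lambda_f$ by two successive applications of the Hecke relation (\ref{eqn:Heckerelation}), then to read off the result as the claimed series and identify its coefficients with $\mathscr B(a,b;\al)$.

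First I would pair up the first two $L$-functions. Expanding $L(f,s+\alpha_1)L(f,s+\alpha_2)$ as $\sum_{m,n}\lambda_f(m)\lambda_f(n)\,m^{-s-\alpha_1}n^{-s-\alpha_2}$ and applying $\lambda_f(m)\lambda_f(n)=\sum_{d\mid(m,n)}\chi(d)\lambda_f(mn/d^2)$ with the substitution $m=dm'$, $n=dn'$, the $d$-sum decouples into $L(\chi,2s+\alpha_1+\alpha_2)$ and the inner double sum reduces (via $N=m'n'$ and the definition (\ref{def:sigma_k})) to $\sum_N \lambda_f(N)\sigma_2(N;\alpha_1,\alpha_2)N^{-s}$. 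Then, multiplying by $L(f,s+\alpha_3)=\sum_m \lambda_f(m)m^{-s-\alpha_3}$, I apply the Hecke relation once more to $\lambda_f(N)\lambda_f(m)$ and substitute $N=eN_1$, $m=em_1$.

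The main obstacle is that this second Hecke application produces the coupled factor $\sigma_2(eN_1;\alpha_1,\alpha_2)$, which is not multiplicative across $e$ and $N_1$. This is precisely what Lemma \ref{lem:multofsigma_k} resolves: writing $e=ce'$, $N_1=cN_2$ with $c\mid(e,N_1)$ splits the factor as $\sigma_2(e';\alpha_1,\alpha_2)\sigma_2(N_2;\alpha_1,\alpha_2)$ weighted by $\mu(c)c^{-\alpha_1-\alpha_2}$, and it is this M\"obius coefficient that ultimately produces the $\mu(a)$ appearing in (\ref{eqn:defB}). After the decoupling, the variables separate: the $e'$-sum yields $L(\chi,2s+\alpha_1+\alpha_3)L(\chi,2s+\alpha_2+\alpha_3)$, and combined with the $L(\chi,2s+\alpha_1+\alpha_2)$ already present, the three $L(\chi,\cdot)$ factors rewrite as $\sum_b \chi(b)\sigma_3(b;\alpha_1+\alpha_2,\alpha_2+\alpha_3,\alpha_3+\alpha_1)b^{-2s}$. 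The joint $(N_2,m_1)$-sum, grouped by the product $n=N_2 m_1$, folds $\sigma_2(N_2;\alpha_1,\alpha_2)N_2^{-s}m_1^{-s-\alpha_3}$ into $n^{-s}\sigma_3(n;\al)$ straight from (\ref{def:sigma_k}). Relabeling $c\to a$ and collecting the residual power of $c$ as $a^{-\alpha_1-\alpha_2-\alpha_3}\cdot a^{-2s}\cdot a^{-s}$ packages everything into $\chi(ab)\mathscr B(a,b;\al)(ab)^{-2s}\lambda_f(an)\sigma_3(n;\al)(an)^{-s}$, which is the claimed identity.
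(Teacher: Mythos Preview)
Your proof is correct and follows essentially the same approach as the paper's: two applications of the Hecke relation together with Lemma~\ref{lem:multofsigma_k} to decouple the resulting $\sigma_2$ factor, then recombination of the separated pieces into the $\sigma_3$ divisor functions appearing in $\mathscr B(a,b;\al)$. The only cosmetic difference is that you pair $L(f,s+\alpha_1)L(f,s+\alpha_2)$ first and then bring in $L(f,s+\alpha_3)$, whereas the paper pairs $L(f,s+\alpha_2)L(f,s+\alpha_3)$ first and multiplies by $L(f,s+\alpha_1)$; by the symmetry of the target identity this makes no difference.
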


\begin{proof}
	
	 From the Hecke relation (\ref{eqn:Heckerelation}) and Lemma \ref{lem:multofsigma_k}, we have
	\es{ \label{prod3L}
		&L\pr{f, s + \alpha_1}L\pr{f,  s + \alpha_2}L\pr{f,  s +\alpha_3} \\
		&= \sum_{n_1 \geq 1} \frac{\lambda_f(n_1)}{n_1^{ s + \alpha_1}}  \sum_{d \geq 1} \frac{\chi(d)}{d^{2s + \alpha_2 + \alpha_3}}  \sum_{j\geq 1} \frac{\lambda_f \left(j\right) \sigma_2(j; \alpha_2, \alpha_3)}{j^{ s}} \\
		&= \sum_{d \geq 1} \frac{\chi(d)}{d^{ 2s + \alpha_2 + \alpha_3}}  \sumtwo_{n_1, j \geq 1} \frac{\sigma_2(j; \alpha_2, \alpha_3)}{n_1^{ s + \alpha_1} j^{ s}} \sum_{e|(n_1,j)} \chi(e) \lambda_f \pr{\frac{jn_1}{e^2}} \\
		&= \sum_{a \geq 1} \frac{\mu(a)\chi(a)}{a^{2s + \alpha_1 + \alpha_2 + \alpha_3}} \sumtwo_{d, e \geq 1} \frac{\chi(de) \sigma_2(e; \alpha_2, \alpha_3)}{(de)^{2s}d^{\alpha_2+ \alpha_3}e^{\alpha_1}} \sumtwo_{j, n_1 \geq 1} \frac{\lambda_f(n_1ja) \sigma_2(j; \alpha_2, \alpha_3)}{(ajn_1)^{ s}n_1^{\alpha_1}} \\
		&= \sum_{a \geq 1} \frac{\mu(a)\chi(a)}{a^{2s + \alpha_1 + \alpha_2 + \alpha_3}} \sum_{b \geq 1} \frac{\chi(b) \sigma_3(b; \alpha_1+ \alpha_2,\alpha_2+ \alpha_3, \alpha_3 + \alpha_1)}{b^{2s}} \sum_{n \geq 1} \frac{\lambda_f(an) \sigma_3(n;\alpha_1, \alpha_2, \alpha_3)}{(an)^{  s}} .}
	This completes the lemma.
\end{proof}

\begin{lem} \label{lem:orthogonality} The orthogality relation for Dirichlet characters is 
\es{ \label{eqn:orthodirichlet} \frac{2}{\phi(q)}\sum_{\substack{\chi \mod q \\ \chi(-1) =(-1)^k}} \chi(m) \overline{\chi} (n)  = \left\{\begin{array}{ll} 1 & {\rm if} \ m \equiv n \mod q,  \ \ (mn, q) = 1\\
		(-1)^k & {\rm if} \ m \equiv - n \mod q,  \ \ (mn, q) = 1 \\
		0 & {\rm otherwise}. \end{array} \right.}
Petersson's formula gives
\es{ \label{eqn:Petersson}\sumh_{f \in \mathcal H_\chi } \overline{\lambda}_f(m) \lambda_f(n) = \delta_{m = n} + \sigma_\chi(m, n),}
where
\est{\sigma_\chi(m, n) &= 2\pi i^{-k} \sum_{c \equiv 0 \mod q} c^{-1}S_\chi(m, n; c) J_{k - 1}\pr{\frac{4\pi}{c} \sqrt{mn}} \\
	&=  2\pi i^{-k} \sum_{c = 1}^{\infty} (cq)^{-1}S_\chi(m, n; cq) J_{k - 1}\pr{\frac{4\pi}{cq} \sqrt{mn}}, }
and $S_\chi$ is the Kloosterman sum defined by 
\es{ \label{def:Kloosterman}S_{\chi} (m, n; cq) = \sum_{a\bar{a} \equiv 1 \mod {cq}} \chi(a) e \pr{\frac{am + \bar{a}n}{cq}}.}

\end{lem}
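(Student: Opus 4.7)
The plan is to prove the two statements separately, as they are essentially bookkeeping: assembling the standard orthogonality of Dirichlet characters with a parity restriction, and recalling the Petersson trace formula in the form convenient for our family.

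For the orthogonality relation \eqref{eqn:orthodirichlet}, the starting point is the standard relation
\est{\frac{1}{\phi(q)} \sum_{\chi \bmod q} \chi(m) \overline{\chi}(n) = \begin{cases} 1 & \text{if } m \equiv n \pmod q \text{ and } (mn, q) = 1, \\ 0 & \text{otherwise.} \end{cases}}
To project onto characters with $\chi(-1) = (-1)^k$, I would use the indicator
\est{\mathbbm{1}_{\chi(-1) = (-1)^k} = \frac{1 + (-1)^k \chi(-1)}{2}.}
Inserting this projector and using $\chi(-1) \overline{\chi}(n) = \overline{\chi}(-n)$, the sum splits as
\est{\frac{2}{\phi(q)} \sum_{\substack{\chi \bmod q \\ \chi(-1) = (-1)^k}} \chi(m)\overline{\chi}(n) = \frac{1}{\phi(q)} \sum_{\chi \bmod q} \chi(m)\overline{\chi}(n) + \frac{(-1)^k}{\phi(q)} \sum_{\chi \bmod q} \chi(m) \overline{\chi}(-n),}
and the full-orthogonality identity applied to each piece yields the three-case statement.

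For \eqref{eqn:Petersson}, this is simply the Petersson trace formula for the space $S_k(\Gamma_0(q), \chi)$, as stated for instance in Iwaniec's \emph{Topics in Classical Automorphic Forms}. The Kloosterman sum \eqref{def:Kloosterman} is exactly the twisted Kloosterman sum attached to a character mod $cq$, and the sum in $\sigma_\chi(m,n)$ naturally runs over moduli divisible by $q$ because of the level structure of $\Gamma_0(q)$. The two expressions for $\sigma_\chi(m,n)$ are related by reindexing $c \mapsto cq$, which converts the sum over multiples of $q$ to a sum over all positive integers and produces the factor $(cq)^{-1}$ and the argument $\tfrac{4\pi\sqrt{mn}}{cq}$ of the Bessel function. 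The only things to verify are that the harmonic weighting $\sumh$ matches the $\Gamma(k-1)/(4\pi)^{k-1}$ normalization in the classical statement and that $\mathcal H_\chi$ is indeed an orthogonal Hecke basis, both of which are immediate from the definitions given in the introduction.

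There is no serious obstacle here: both parts are standard, and the only care needed is to track the $\chi(-1) = (-1)^k$ projector correctly for the first part and to match normalizations in the second. The lemma is stated in this explicit form because the two identities are the main analytic tools applied to the approximate functional equation in the sections that follow.
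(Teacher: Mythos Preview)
Your proposal is correct. The paper itself does not supply a proof of this lemma at all, treating both parts as standard facts; your argument via the parity projector $\tfrac{1+(-1)^k\chi(-1)}{2}$ for \eqref{eqn:orthodirichlet} and the citation of the classical Petersson trace formula for \eqref{eqn:Petersson} is exactly the expected justification.
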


From Lemma \ref{eqn:prod3Lfnc}, we have that
\est{\prod_{i = 1}^3  L(f, s+ \alpha_i)L(f, s - \beta_i) &= \sumfour_{a_1, b_1, a_2, b_2 \geq 1} \frac{\chi(a_1b_1)\overline{\chi}(a_2b_2)\mathscr B(a_1, b_1; \al)\mathscr B(a_2, b_2; -\bb)}{(a_1b_1a_2b_2)^{ 2s}} \\
	&  \ \ \ \ \ \ \ \ \ \ \times  \sumtwo_{n, m \geq 1} \frac{\lambda_f(a_1n)\overline{\lambda_f}(a_2m) \sigma_3(n;\al)\sigma_3(m;-\bb)}{(a_1na_2m)^{ s}}.}
By the orthogonality relation of Dirichlet characters and Petersson's formula in Lemma \ref{lem:orthogonality}, a naive guess might be that the main contribution comes from the diagonal terms $a_1b_1 = a_2b_2$ and $a_1n = a_2m$, where $(a_ib_i, q) = 1,$ which is 
\es{\label{def:Cs} \mathcal C(s, \al, \bb) := \sumsix_{\substack{a_1, b_1, a_2, b_2, m, n \geq 1\\ a_1n = a_2m \\ a_1b_1 = a_2b_2 \\ (a_i, q) = (b_j, q) = 1 }}  \frac{ \mathscr B(a_1, b_1; \al)}{(a_1b_1)^{2s}} \frac{ \mathscr B(a_2, b_2; -\bb)}{(a_2b_2)^{2s}} \frac{\sigma_3(n;\al)\sigma_3(m; -\bb)}{(a_1n)^{ s} (a_2m)^{ s}}}
for $\tRe(s)$ large enough. This can be written as the Euler product 
$$ \mathcal C(s, \al, \bb) = \prod_p \mathcal C_p(s, \al, \bb), $$
where for $p \neq q$, 

\es{\label{def:Cps} \mathcal C_p(s, \al, \bb) &:= \sumsix_{\substack{r_1, t_1, r_2, t_2, u_1, u_2 \geq 0 \\ r_1 + u_1 = r_2 + u_2 \\ r_1 + t_1 = r_2 + t_2  }}  \frac{ \mathscr B(p^{r_1}, p^{t_1}; \al)}{p^{2s(r_1 + t_1)}} \frac{ \mathscr B(p^{r_2}, p^{t_2}; -\bb)}{p^{2s(r_2 + t_2)}} \frac{\sigma_3(p^{u_1};\al)\sigma_3(p^{u_2}; -\bb)}{p^{ s(r_1 + u_1)} p^{s(r_2 + u_2)}}, }
and for $p = q,$ 
\es{\label{def:Cqs} \mathcal C_q(s, \al, \bb) &:= \sum_{\substack{u \geq 0   }}   \frac{\sigma_3(q^{u};\al)\sigma_3(q^{u}; -\bb)}{q^{ 2us} }. }
 
Next, for $\zeta_p(w) := \pr{1 - \frac{1}{p^w}}^{-1}$, let
\es{ \label{def:mathcalZp}\mathcal Z_p(s;\al, \bb) := \prod_{i = 1}^3 \prod_{j = 1}^3 \zeta_p (2s + \alpha_i - \beta_j), \hskip 0.7in \mathcal Z (s; \al, \bb) := \prod_{i = 1}^3 \prod_{j = 1}^3 \zeta(2s + \alpha_i - \beta_j).}

and 
\es{\label{def:mathcalAs} \mathcal A (s; \al, \bb) := \mathcal C(s; \al , \bb) \mathcal Z(s;\al, \bb)^{-1} = \prod_{p} \mathcal C_p(s; \al , \bb) \mathcal Z_p(s;\al, \bb)^{-1}.}

We define
\es{\label{def:Mqalbb} \mathcal M(q; \al, \bb) := \pr{\frac{q}{4\pi^2}}^{\delta(\al, \bb)} G\pr{\tfrac 12; \al, \bb }\mathcal A \mathcal Z \pr{\tfrac 12; \al, \bb}. }
When $\tRe(\alpha_i), \tRe(\beta_i) \ll \frac{1}{\log q}$, the term $\mathcal A\pr{\tfrac 12, \al, \bb}$ is absolutely convergent.  Now, let $S_{j}$ be the permutation group of $j$ variables.  Based on the analysis of the diagonal contribution, we expect $\mathcal M(q; \al, \bb) $ to be a part of the average in (\ref{momentS1q}), and we also notice that the expression $\mathcal M(q; \al, \bb)$ is fixed by the action of $S_3 \times S_3$.  Since we expect our final answer to be symmetric under the full group $S_6$, we sum over the cosets $S_6/(S_3 \times S_3)$.  In fact, the method of Conrey, Farmer, Keating, Rubinstein and Snaith \cite{CFKRS} gives the following conjecture for the average of $\Lambda(f;\al, \bb)$.

\begin{conj} \label{conj:mainterm}Assume that $\al, \bb$ satisfy $\tRe(\alpha_i), \tRe(\beta_i)  \ll \frac 1{\log q}$ and $\tIm (\alpha_i), \tIm (\beta_i) \ll q^{1-\eps}.$ We have
\est{\frac{2}{\phi(q)}\sum_{\substack{\chi \mod q \\ \chi(-1) = (-1)^k}} \sumh_{f \in \mathcal H_\chi} \Lambda\pr{f; \al , \bb }  =  \sum_{\pi \in S_{6}/(S_3\times S_3)} \mathcal M(q; \pi(\al, \bb)) \pr{1 + O(q^{-\frac 12 + \eps})}, }
where  we define $\pi(\al, \bb) = \pi (\alpha_1, ...,\alpha_k, \beta_1,...,\beta_k)$ for $\pi \in S_{2k}$, where $\pi$ acts on the $2k$ tuple $(\alpha_1, ...,\alpha_k, \beta_1,...,\beta_k)$ as usual. 

\end{conj}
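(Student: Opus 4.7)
My plan is to prove Conjecture~\ref{conj:mainterm} by the standard moment method adapted to this $GL(2)$ setting, following the schematic outlined in Section~\ref{subsec:outline}. I would first apply an approximate functional equation for $\Lambda(f;\al,\bb)$, producing two smoothly truncated sums of length roughly $q^{3/2}$, with archimedean weights governed by $G(1/2;\al,\bb)$. Using Lemma~\ref{eqn:prod3Lfnc} on each product of three shifted $L$-functions, the quantity to understand takes the form
$$
\sumfour_{a_1,b_1,a_2,b_2}\sumtwo_{n,m}\frac{\chi(a_1b_1)\overline{\chi}(a_2b_2)\,\mathscr B(a_1,b_1;\al)\,\mathscr B(a_2,b_2;-\bb)\,\sigma_3(n;\al)\,\sigma_3(m;-\bb)}{(a_1b_1a_2b_2)(a_1na_2m)^{1/2}}\,\lambda_f(a_1n)\,\overline{\lambda_f}(a_2m),
$$
weighted by a smooth cutoff and averaged over $\chi\bmod q$ and harmonically over $f\in\mathcal H_\chi$.

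Next I would apply character orthogonality and Petersson's formula (Lemma~\ref{lem:orthogonality}), in that order. Orthogonality forces $a_1b_1\equiv\pm a_2b_2\pmod q$, while Petersson splits the average into a diagonal piece $\delta_{a_1n=a_2m}$ and an off-diagonal Kloosterman piece with weight $J_{k-1}\!\left(4\pi\sqrt{a_1na_2m}/(cq)\right)$. The diagonal already matches the Euler product $\mathcal C$ of~\eqref{def:Cs}, and after a standard contour shift that produces the nine zeta factors of $\mathcal Z(1/2;\al,\bb)$, it should deliver exactly the identity-coset contribution $\mathcal M(q;\al,\bb)$ to Conjecture~\ref{conj:mainterm}.

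The off-diagonal piece must account for the remaining $19$ permutation contributions of $S_6/(S_3\times S_3)$, and this is where the main work lies. Following the sketch in Section~\ref{subsec:outline}, the character average restricts the Kloosterman residue to $a\equiv 1\pmod q$; Chinese Remainder plus reciprocity then peel off a smooth factor $e((a_1n+a_2m)/cq)$ and reduce the modulus of the residual exponential sum from $cq$ to $c\asymp q^{1/2}$, the crucial \emph{conductor drop}. To the resulting expression I would apply the shifted Voronoi summation of Appendix~\ref{sec:voronoi} to each of the $n$- and $m$-variables, transforming every divisor sum into a dual sum whose effective length is governed by $c$ rather than $cq$. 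The dual terms naturally split according to whether each Voronoi application contributes its residue or its oscillatory tail, yielding a putative main-term piece (the analogue of Proposition~\ref{prop:mainTM}) and an error piece (the analogue of Proposition~\ref{prop:Terror}).

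The hard step will be identifying the main-term piece with $\sum_{\pi\neq\mathrm{id}}\mathcal M(q;\pi(\al,\bb))$. A priori the Voronoi output is an intricate mixture of Gauss sums, shifted triple-divisor convolutions, and contour shifts past poles of $G(s;\al,\bb)$, and it is not at all clear that these assemble into exactly nine shifted zeta factors weighted by $\mathcal A(1/2;\pi(\al,\bb))$. I would verify the identity locally, prime by prime: at primes $p\neq q$, match the Euler factors of the two sides using the combinatorial identity of Iwaniec and X.~Li~\cite{IL}, and at $p=q$ handle the local factor via~\eqref{def:Cqs}. This matching is essentially the content of Appendix~\ref{sec:Eulerverif}. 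The error terms are comparatively routine: once the dual sums are known to be effectively of length at most $c^{1+\eps}$, which is precisely the conductor reduction reappearing, bounding them by positivity together with the Weil bound for the exponential sums modulo $c$ should give the claimed $O(q^{-1/2+\eps})$ error, uniformly over shifts with $\tIm(\alpha_i),\tIm(\beta_i)\ll q^{1-\eps}$.
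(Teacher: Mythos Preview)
The statement you are attempting to prove is a \emph{conjecture}, and the paper does not prove it. What the paper actually establishes is Theorem~\ref{thm:mainmoment}, which differs from Conjecture~\ref{conj:mainterm} by the presence of the short average $\int_{-\infty}^\infty \cdots\, dt$ along the critical line. Your proposal follows the paper's argument for Theorem~\ref{thm:mainmoment} essentially verbatim, but that argument does not extend to Conjecture~\ref{conj:mainterm}; the paper says so explicitly in Remark~\ref{rem:boundV} and in the Introduction.

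The concrete obstruction is at the very first step. Without the $t$-integration, the approximate functional equation for $\Lambda(f;\al,\bb)$ only localizes the \emph{product} $a_1^3b_1^2n\cdot a_2^3b_2^2m\ll q^{3+\eps}$; it does not force each factor separately to be $\ll q^{3/2+\eps}$ (compare Lemma~\ref{lem:decayV}, whose proof uses the change of variables $w=s+it$, $z=s-it$ that is only available after integrating in $t$). Thus one must handle highly unbalanced ranges such as $n\asymp q^{3-\eps}$, $m\asymp q^{\eps}$. In such a range the conductor drop from $cq$ to $c$ is no longer enough: after Voronoi in $n$ the dual length is governed by $N/c^3$ (cf.\ the cutoff $n\ll \eta_1^3 q^\eps/N$ in \S\ref{sec:proofE1}), and when $N$ is as large as $q^{3}$ the dual sum is not short at all. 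Your assertion that ``the dual sums are known to be effectively of length at most $c^{1+\eps}$'' therefore fails precisely for these unbalanced configurations, and neither positivity nor the Weil bound recovers the $O(q^{-1/2+\eps})$ you claim. Removing the $t$-average is, as the paper says, ``a significant challenge,'' and your proposal does not address it.
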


We will also write $\pi(\al, \bb) = (\pi(\al), \pi(\bb))$ by an abuse of notation, where $\pi(\al, \bb)$ is as above.  Our main goal is to find an asymptotic formula for
 \es{ \label{eqn:mainConsider} \mathcal M_6(q) :=  \frac{2}{\phi(q)}\sum_{\substack{\chi \mod q \\ \chi(-1) = (-1)^k}} {\sum_{f \in \mathcal H_\chi}}^h \int_{-\infty}^{\infty} \Lambda\pr{f; \al + it, \bb + it} \> dt,}
 and we will prove the following result. 
 \begin{thm} \label{thm:mainmoment}  Let $q$ be prime and $k \geq 5$ be odd.  For $\alpha_i, \beta_j \ll \frac 1{\log q}$, we have that 
 \est{\mathcal M_6(q) =   \int_{-\infty}^{\infty} \sum_{\pi \in S_{6}/(S_3\times S_3)}  \mathcal M(q, \pi(\al) + it, \pi(\bb) + it) \> dt + O\pr{q^{-\frac 14 + \eps}}. }
 \end{thm}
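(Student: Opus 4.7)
The plan is to follow the roadmap sketched in Section \ref{subsec:outline}, building up the asymptotic for $\mathcal{M}_6(q)$ out of three contributions: a diagonal piece arising from the term $\delta_{m=n}$ in Petersson's formula, and an off-diagonal ``main term'' and ``error term'' produced by the Kloosterman-sum tail after conductor-drop and Voronoi. First I would set up an approximate functional equation for $\Lambda(f;\al+it,\bb+it)$ using $G(s;\al+it,\bb+it)$, which expresses the integrand as two Dirichlet-series convolutions of length roughly $q^{3/2}$ in each variable (times a rapidly decaying function in $t$). Expanding the two products of three $L$-functions via Lemma \ref{eqn:prod3Lfnc} turns the inner object into an average of
\est{\sum_{m,n} \frac{\sigma_3(n;\al)\sigma_3(m;-\bb)\,\chi(a_1b_1)\overline\chi(a_2b_2)\,\lambda_f(a_1n)\overline{\lambda_f}(a_2m)}{(a_1n\,a_2m)^{1/2+it}(a_1b_1a_2b_2)^{1+2it}}\mathscr B(a_1,b_1;\al)\mathscr B(a_2,b_2;-\bb).}
Applying Petersson's formula (Lemma \ref{lem:orthogonality}) then splits the average into a diagonal part and an off-diagonal Kloosterman piece.

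For the diagonal, the condition $a_1n=a_2m$ together with the $\chi$-average on $a_1b_1\equiv\pm a_2b_2\pmod q$ collapses the sum (for $q$ prime) to essentially $\mathcal C(1/2+it,\al,\bb)$ of \eqref{def:Cs}, modulo terms $\equiv -a_2b_2$ which are negligible for odd $k$. Unfolding through $\mathcal Z\mathcal A$ and reinserting the Gamma factors $G(1/2;\al+it,\bb+it)$ and $(q/4\pi^2)^{\delta(\al,\bb)}$, this recovers the identity permutation contribution $\int \mathcal M(q;\al+it,\bb+it)\,dt$ in Theorem \ref{thm:mainmoment}, modulo an admissible error. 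This is essentially the content that Section \ref{sec:diag} handles.

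For the off-diagonal part, I would first perform the character average: for $(c,q)=1$, the combined sum
\est{\frac{2}{\phi(q)}\sum_{\substack{\chi\bmod q\\ \chi(-1)=(-1)^k}}\chi(a_1b_1)\overline\chi(a_2b_2)\,S_\chi(a_1n,a_2m;cq)}
can be untwisted by reciprocity, producing the factor $e((a_1n+a_2m)/(cq))$ times a Kloosterman-type sum of conductor only $c$, as in the excerpt. This is the key conductor drop. I would then apply the shifted Voronoi summation formula from Appendix \ref{sec:voronoi} to both the $n$- and $m$-sums against the Bessel weight $J_{k-1}$. The transition range is $c\asymp q^{1/2}$, so the dual variables live in a genuinely short range, from which both the main and error estimates will follow. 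Identifying the polar contribution (at $s=1$ from each $\sigma_3$ Dirichlet series) gives Proposition \ref{prop:mainTM}; the remaining oscillatory integrals are bounded in Proposition \ref{prop:Terror} using stationary phase on $J_{k-1}$ together with Weil-type bounds for the residual Kloosterman sums.

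The crux, and the step I expect to be the hardest, is to show that the polar contribution from Voronoi reassembles into the remaining $5$ cosets of $S_6/(S_3\times S_3)$ acting on $\mathcal M(q;\al+it,\bb+it)$. The Dirichlet series emerging from the Voronoi dual has nine implicit zeta factors only after several cancellations: some come directly from residues at $s=1$, others appear only after evaluating a further divisor-type sum of Iwaniec--Xiaoqing Li type \cite{IL}, which re-synthesises a missing $\zeta$ from the residual arithmetic sum over the dual variables. To finish I would fix an arbitrary permutation $\pi\in S_6/(S_3\times S_3)$ and verify, prime by prime via a local-factor computation, that the $p\ne q$ Euler factor of the contribution matches $\mathcal C_p\mathcal Z_p^{-1}$ of \eqref{def:Cps}, \eqref{def:mathcalZp} with shifts $\pi(\al),\pi(\bb)$; at $p=q$ one checks that \eqref{def:Cqs} appears, giving the factor $C_q^{-1}(1-1/q)^4(1+4/q+1/q^2)$ in Theorem \ref{cor:main}. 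Finally, collecting all error savings from conductor drop ($q^{1/2}$), Voronoi truncation, and averaging over $t$, one arrives at the quoted error of size $q^{-1/4+\eps}$.
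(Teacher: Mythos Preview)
Your outline matches the paper's proof essentially step for step: approximate functional equation, Petersson, diagonal $\Rightarrow$ identity term, conductor drop via reciprocity on the Kloosterman side, shifted Voronoi in both variables, residues giving further main terms (Proposition \ref{prop:mainTM}), and the oscillatory remainder bounded as in Proposition \ref{prop:Terror}. Two points deserve correction, however.

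First, your coset count is off. One has $|S_6/(S_3\times S_3)|=20$, not $6$. The approximate functional equation produces \emph{two} halves, $\Lambda_1(f;\al,\bb)$ and $\Lambda_1(f;\bb,\al)$; from each half the diagonal contributes one permutation (the identity, respectively the full swap $\al\leftrightarrow\bb$), while the double residues from Voronoi sit at $s_1=1-\alpha_i$, $s_2=1+\beta_j$ and hence yield $3\times 3=9$ terms per half, corresponding precisely to the nine transpositions $(\alpha_i\;\beta_j)$. Altogether $1+9+1+9=20$. If you only track ``the remaining $5$ cosets'' you will not recover the full sum $\sum_{\pi\in S_6/(S_3\times S_3)}\mathcal M(q;\pi(\al)+it,\pi(\bb)+it)$.

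Second, two technical steps you glossed over are in fact load-bearing in the paper. Before the conductor-drop manipulation one must truncate the $c$-sum to $c<C=\sqrt{a_1a_2MN}/q^{2/3}$ (Proposition \ref{prop:truncateC}); this both controls the Bessel tail and, since $q$ is prime, forces $(c,q)=1$ so that the CRT/reciprocity argument applies. And at the very end one must divide through by $H(0;\al,\bb)$: the paper first works under $|\alpha_i-\beta_j|\gg q^{-\eps}$ so that $H(0;\al,\bb)\gg q^{-\eps}$, and then removes this restriction by analyticity of both sides in the shifts. Without this last step the error term is not uniform as shifts coalesce.
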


We note that as the shifts go to $0$, the main term of this moment is of the size $(\log q)^9$, and we derive Theorem (\ref{cor:main}).  We refer the reader to \cite{CFKRS} for the details of this type of calculation. 

\section{Approximate functional equation}\label{sec:prelem}

In this section, we will prove an approximate functional equation for the product of $L$-functions. Let 
\est{H(s; \al, \bb) = \prod_{j = 1}^3 \prod_{\ell = 1}^3 \pr{s^2 - \pr{\frac{\alpha_j - \beta_\ell}{2}}^2}^3,}
and define for any $\xi> 0,$
\est{W(\xi; \al, \bb) = \frac{1}{2\pi i} \int_{(1)} G\pr{\tfrac 12 + s; \al, \bb} H\pr{s; \al, \bb} \xi^{-s} \ \frac{ds}{s}.}
Moreover, let $\Lambda_0(f, \al, \bb) $ be
\es{\label{def:Lambda_0} 
& \pr{\frac{q}{4\pi^2}}^{\delta(\alpha, \beta)} \sumfour_{a_1, b_1, a_2, b_2 \geq 1}  \frac{\chi(a_1b_1) \mathscr B(a_1, b_1; \al)}{a_1b_1}  \frac{\overline{\chi}(a_2b_2) \mathscr B(a_2, b_2; -\bb)}{a_2b_2} \\
& \cdot  \sumtwo_{n, m \geq 1} \frac{\lambda_f(a_1n) \sigma_3(n;\al)}{(a_1n)^{1/2}} \frac{\overline{\lambda_f}(a_2m) \sigma_3(m; -\bb)}{(a_2m)^{1/2}} W\pr{\frac{(2\pi)^6 a_1^{3}b_1^2a_2^{3}b_2^2nm}{q^{3}}; \al, \bb}. }

\begin{lemma} \label{lem:approxFncLmodular} We have
\est{H(0;\al,\bb)\Lambda(f; \al, \bb) = \Lambda_0(f, \al, \bb) + \Lambda_0(f, \bb, \al).}

\end{lemma}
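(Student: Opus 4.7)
Define the auxiliary contour integral
$$ I(\al, \bb) := \frac{1}{2\pi i}\int_{(1)} \Lambda\pr{f, \tfrac 12 + s; \al, \bb} H(s; \al, \bb) \frac{ds}{s}.$$
The plan is to establish two separate representations of $I(\al,\bb)$: first, that $I(\al, \bb) = \Lambda_0(f, \al, \bb)$, by opening Dirichlet series on the line $\tRe(s)=1$; and second, that $I(\al, \bb) + I(\bb, \al) = H(0; \al, \bb)\Lambda(f; \al, \bb)$, by shifting the contour past $s=0$ and applying the functional equation. Together these yield the lemma.

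For the first representation, I would expand $\Lambda(f, \tfrac{1}{2}+s; \al, \bb)$ as $\pr{\frac{q}{4\pi^2}}^{3s + \delta(\al, \bb)} G(\tfrac{1}{2}+s;\al, \bb)$ times a product of six $L$-values, and apply Lemma~\ref{eqn:prod3Lfnc} twice (the second with $f \mapsto \bar f$, $\chi \mapsto \bar\chi$, $\al \mapsto -\bb$) to convert the $L$-product into a quadruple sum of Dirichlet series. On $\tRe(s) = 1$ every Dirichlet series converges absolutely and $G(\tfrac{1}{2}+s; \al, \bb)$ decays exponentially in $\tIm(s)$, so Fubini lets me exchange the quadruple sum with the $s$-integral. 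Collecting the $s$-dependent factors produces $\pr{\frac{(2\pi)^6 a_1^3 b_1^2 a_2^3 b_2^2 nm}{q^3}}^{-s}$, after which the inner contour integral is precisely $W\pr{\frac{(2\pi)^6 a_1^3 b_1^2 a_2^3 b_2^2 nm}{q^3}; \al, \bb}$, recovering the definition of $\Lambda_0(f,\al,\bb)$.

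For the second representation, $\Lambda(f, s; \al, \bb)$ is entire in $s$ (since $f$ is a cusp form), so shifting the contour of $I(\al, \bb)$ from $\tRe(s)=1$ to $\tRe(s)=-1$ crosses only the simple pole of $1/s$ at $s = 0$, with residue $H(0; \al, \bb)\Lambda(f; \al, \bb)$. On the shifted line I substitute $s \mapsto -s$; the structural identities $H(-s; \al, \bb) = H(s; \al, \bb) = H(s; \bb, \al)$ follow directly from the definition of $H$. Then I apply the functional equation $\Lambda(f, \tfrac{1}{2}+w) = i^k\overline{\eta}_f \Lambda(\bar f, \tfrac{1}{2}-w)$ to the three factors shifted by $\alpha_j$, and its conjugate $\Lambda(\bar f, \tfrac{1}{2}+w) = (-1)^k i^k \eta_f \Lambda(f, \tfrac{1}{2}-w)$ to the three factors shifted by $-\beta_j$. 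The six root-number prefactors collapse as $(i^{2k})^3 (-1)^{3k} |\eta_f|^6 = (-1)^{3k}(-1)^{3k} = 1$, producing the clean symmetry $\Lambda(f, \tfrac{1}{2}-s; \al, \bb) = \Lambda(f, \tfrac{1}{2}+s; \bb, \al)$. The reflected integral therefore equals $-I(\bb, \al)$ (the minus sign coming from $1/s \mapsto -1/s$ under the substitution), and rearranging gives the desired identity.

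The main technical obstacle is the sign accounting in the functional-equation step: the six root numbers arising from the three $f$-factors and three $\bar f$-factors must combine to $+1$ independently of the parity of $k$, else the lemma would read as a difference of $\Lambda_0$'s rather than a sum. Everything else---absolute convergence of the Dirichlet series on $\tRe(s) = 1$, rapid decay of $G$ along vertical lines justifying the contour shift, and the double symmetry of $H$ in $s$ and in $(\al,\bb)$---is routine once the definitions are unpacked.
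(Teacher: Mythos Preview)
Your proposal is correct and follows essentially the same route as the paper: define the contour integral $I(\al,\bb)$, identify it with $\Lambda_0(f,\al,\bb)$ by expanding the Dirichlet series via Lemma~\ref{eqn:prod3Lfnc}, and obtain the second relation by shifting the contour through $s=0$ and invoking the functional equation together with the evenness and $(\al,\bb)$-symmetry of $H$. The only difference is cosmetic: the paper asserts the product functional equation $\Lambda(f,\tfrac12+s;\al,\bb)=\Lambda(f,\tfrac12-s;\bb,\al)$ directly, whereas you verify the root-number cancellation $(i^{2k})^3(-1)^{3k}|\eta_f|^6=1$ explicitly.
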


\begin{proof}
We consider
\est{I := \frac{1}{2\pi i}\int_{(1)} \Lambda\pr{f, s + 1/2; \al, \bb} H(s; \al, \bb) \> \frac{ds}{s}.}
Moving the contour integral to $(-1)$, we obtain that 
\est{I &= \Lambda\pr{f; \al, \bb} H(0; \al, \bb) + \frac{1}{2\pi i}\int_{(-1)}\Lambda\pr{f, s + 1/2; \al, \bb} H(s; \al, \bb) \> \frac{ds}{s} \\
&= \Lambda\pr{f; \al, \bb} H(0; \al, \bb) - \frac{1}{2\pi i}\int_{(1)}\Lambda\pr{f, -s + 1/2; \al, \bb} H(-s; \al, \bb) \> \frac{ds}{s}.}
By the functional equation, we have $\Lambda(f, s + 1/2 ; \al, \bb) = \Lambda(f, -s + 1/2; \bb, \al)$. Moreover, $H$ is an even function, and $H(s;\al, \bb)  = H(s; \bb, \al).$ Therefore, 
\est{\Lambda\pr{f; \al, \bb} H(0; \al, \bb) = & \ \frac{1}{2\pi i}\int_{(1)}\Lambda\pr{f, s + 1/2; \al, \bb} H(s; \al, \bb) \frac{ds}{s} \\
&+\frac{1}{2\pi i}\int_{(1)}\Lambda\pr{f, s + 1/2; \bb, \al} H(s; \al, \bb) \> \frac{ds}{s}.}
The Lemma follows after writing $\Lambda$ as a product of $L$-functions and Gamma functions and using Lemma \ref{eqn:prod3Lfnc}. 

\end{proof}

Next, we let 
\est{V_{\al, \bb}(\xi, \eta ; \mu) = \pr{\frac{\mu}{4\pi^2}}^{ \delta(\al, \bb)} \int_{-\infty}^{\infty} \pr{\frac{\eta}{\xi}}^{it} W\pr{\frac{\xi\eta (4\pi^2)^3}{\mu^{3}}; \al + it, \bb + it} \> dt,}
and
\es{\label{def:Lambda_1} 
\Lambda_1(f; \al, \bb) &= \sumfour_{a_1, b_1, a_2, b_2 \geq 1}  \frac{\chi(a_1b_1) \mathscr B(a_1, b_1; \al)}{a_1b_1} \frac{\overline{\chi}(a_2b_2) \mathscr B(a_2, b_2; -\bb)}{a_2b_2} \\
& \cdot  \sumtwo_{n, m \geq 1} \frac{\lambda_f(a_1n) \sigma_3(n;\al)}{(a_1n)^{1/2}} \frac{\overline{\lambda_f}(a_2m) \sigma_3(m; -\bb)}{(a_2m)^{1/2}} V_{\al, \bb}\pr{a_1^3 b_1^2 n, a_2^3b_2^2m; q}. }

\begin{lemma} \label{lem:approxV} With notation as above, we have
\est{H(0; \al, \bb) \int_{-\infty}^{\infty} \Lambda(f; \al + it, \bb + it) \> dt = \Lambda_1(f; \al, \bb) + \Lambda_1(f; \bb, \al).}

\end{lemma}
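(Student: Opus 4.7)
The strategy is to apply Lemma \ref{lem:approxFncLmodular} with the parameters $\al, \bb$ replaced by $\al + it, \bb + it$, integrate the resulting identity over $t \in \mathbb R$, and then recognize the two right-hand terms as $\Lambda_1(f;\al,\bb)$ and $\Lambda_1(f;\bb,\al)$. The first observation making this work is that $H(s;\al,\bb)$ depends only on the differences $\alpha_j - \beta_\ell$, so that $H(0;\al+it,\bb+it) = H(0;\al,\bb)$; similarly $\delta(\al+it,\bb+it) = \delta(\al,\bb)$. Thus after integrating in $t$,
\est{H(0;\al,\bb)\int_{-\infty}^{\infty}\Lambda(f;\al+it,\bb+it)\,dt = \int_{-\infty}^\infty \Lambda_0(f,\al+it,\bb+it)\,dt + \int_{-\infty}^\infty \Lambda_0(f,\bb+it,\al+it)\,dt.}

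It remains to show that $\int \Lambda_0(f,\al+it,\bb+it)\,dt = \Lambda_1(f;\al,\bb)$; the second identity is the same with $\al,\bb$ swapped. I would justify interchanging the $t$-integral with the absolutely convergent sums over $a_1,b_1,a_2,b_2,n,m$ by appealing to the rapid decay of $W$ in the vertical direction (which follows from the factor $G(1/2+s;\al+it,\bb+it)$ through Stirling and the polynomial $H$). Inside the integrand, the arithmetic coefficients transform cleanly: from the definition of $\sigma_3$ one has $\sigma_3(n;\al+it) = n^{-it}\sigma_3(n;\al)$ and $\sigma_3(m;-\bb-it) = m^{it}\sigma_3(m;-\bb)$, and from the definition of $\mathscr B$ one has
\est{\mathscr B(a_1,b_1;\al+it) = (a_1^3 b_1^2)^{-it}\mathscr B(a_1,b_1;\al), \qquad \mathscr B(a_2,b_2;-\bb-it) = (a_2^3 b_2^2)^{it}\mathscr B(a_2,b_2;-\bb),}
since multiplying each shift by $it$ multiplies the pairwise sums in the $\sigma_3$ inside $\mathscr B$ by $2it$, which the divisor sum absorbs as a global power.

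Combining these identities, the net $t$-dependence coming from the arithmetic factors at a fixed tuple $(a_1,b_1,a_2,b_2,n,m)$ is exactly $(\eta/\xi)^{it}$ where $\xi = a_1^3 b_1^2 n$ and $\eta = a_2^3 b_2^2 m$, while the argument of $W$ becomes $\xi\eta(4\pi^2)^3/q^3$ and the external prefactor is $(q/4\pi^2)^{\delta(\al,\bb)}$. The inner $t$-integral is therefore precisely
\est{\pr{\frac{q}{4\pi^2}}^{\delta(\al,\bb)}\int_{-\infty}^{\infty}\pr{\frac{\eta}{\xi}}^{it} W\pr{\frac{\xi\eta(4\pi^2)^3}{q^3}; \al+it,\bb+it}\,dt = V_{\al,\bb}(\xi,\eta;q),}
by the definition of $V_{\al,\bb}$. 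Substituting this into the six-fold sum reproduces the defining expression \eqref{def:Lambda_1} for $\Lambda_1(f;\al,\bb)$, completing the proof.

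The only nonroutine point is the bookkeeping of how the shifts $it$ flow through $\mathscr B$ and $\sigma_3$ so that the product of $t$-dependent factors collapses to $(\eta/\xi)^{it}$; everything else is the invariance of $H$ and $\delta$ under a common additive shift, and the rapid decay of $W$ that justifies Fubini. No new analytic input beyond Lemma \ref{lem:approxFncLmodular} is required.
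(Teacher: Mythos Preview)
Your proof is correct and is exactly the argument the paper has in mind; the paper's own proof is the single sentence ``The proof follows easily from Lemma~\ref{lem:approxFncLmodular},'' and you have simply spelled out the bookkeeping that makes this immediate. The key observations you identify---that $H$ and $\delta$ are invariant under a common shift $it$, and that the shifts propagate through $\mathscr B$ and $\sigma_3$ to produce precisely the factor $(\eta/\xi)^{it}$ appearing in the definition of $V_{\al,\bb}$---are the entire content.
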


The proof follows easily from Lemma \ref{lem:approxFncLmodular}.

\begin{remark} \label{rem:boundV}
The integration over $t$ is added so that the main contribution comes from when $a_1^3b_1^2n \ll q^{3/2 + \eps}$ and $a_2^3b_2^2m \ll q^{3/2 + \eps}$, and we will see this from Lemma \ref{lem:decayV} below. Without the integration over $t$, the ranges of $a_i, b_j, m, n$ that we need to consider satisfy the weaker condition $a_1^3b_1^2na_2^3b_2^2m \ll q^{3 + \eps},$ and the proof presented here does not extend to this range. 
\end{remark}

\begin{lem} \label{lem:decayV}
If $\xi$ or $\eta \gg q^{\frac 32 + \eps},$ then for any $A > 1,$ we have
\est{V_{\al, \bb} (\xi, \eta; q) \ll q^{-A},}
where the implied constant depends on $\eps$ and $A.$
\end{lem}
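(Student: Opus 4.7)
The plan is to recast $V_{\al,\bb}(\xi,\eta;q)$ as a double Mellin--Barnes integral and then make a change of variables that decouples the $\xi$- and $\eta$-dependence. Substituting the definition of $W$ into $V$, interchanging the order of integration (justified by absolute convergence via Stirling), and using the important fact that $H(s;\al+it,\bb+it)=H(s;\al,\bb)$ is independent of $t$, we obtain
\begin{equation*}
V_{\al,\bb}(\xi,\eta;q) = \pr{\frac{q}{4\pi^{2}}}^{\delta(\al,\bb)} \frac{1}{2\pi i} \int_{(1)} \int_{-\infty}^{\infty} \pr{\frac{\eta}{\xi}}^{it} G\pr{\tfrac{1}{2}+s;\al+it,\bb+it}\, H(s;\al,\bb)\, \pr{\frac{\xi\eta(4\pi^{2})^{3}}{q^{3}}}^{-s} \frac{dt\, ds}{s}.
\end{equation*}
I would then change variables to $S_{+}=s+it$, $S_{-}=s-it$. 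The new contours are $\tRe(S_{\pm})=1$, the Gamma product factors as $G(\tfrac{1}{2}+s;\al+it,\bb+it) = \prod_{j=1}^{3}\Gamma(S_{+}+\tfrac{k}{2}+\alpha_{j})\,\Gamma(S_{-}+\tfrac{k}{2}-\beta_{j})$, and a short computation gives
\begin{equation*}
\pr{\frac{\xi\eta(4\pi^{2})^{3}}{q^{3}}}^{-s}\pr{\frac{\eta}{\xi}}^{it} \;=\; \pr{\frac{q^{3/2}}{\xi}}^{S_{+}}\pr{\frac{q^{3/2}}{\eta}}^{S_{-}}(4\pi^{2})^{-3(S_{+}+S_{-})/2},
\end{equation*}
so the dependence on $\xi$ and $\eta$ is now carried entirely by $S_{+}$ and $S_{-}$, respectively.

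Assume without loss of generality that $\xi\gg q^{3/2+\eps}$; the case of large $\eta$ is symmetric, after interchanging the roles of $S_{+}$ and $S_{-}$. The next step is to shift the $S_{+}$-contour to $\tRe(S_{+})=T$ for some constant $T\geq 1$ to be chosen later. No singularities are crossed: each $\Gamma(S_{+}+\tfrac{k}{2}+\alpha_{j})$ is holomorphic in $\tRe(S_{+})>0$ (since $k\geq 5$ and $\tRe(\alpha_{j})=o(1)$), the polynomial $H$ is entire, and the only pole of $(S_{+}+S_{-})^{-1}$ lies at $\tRe(S_{+})=-\tRe(S_{-})=-1$. On the shifted contour, $|(q^{3/2}/\xi)^{S_{+}}|\leq q^{-\eps T}$.

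Standard Stirling bounds give $|\Gamma(T+\tfrac{k}{2}+iu+\alpha_{j})|\ll_{T,k}(1+|u|)^{T+k/2-1/2}e^{-\pi|u|/2}$ for $u\in\mathbb{R}$, so the $S_{+}$-integral over the shifted contour converges absolutely. The $S_{-}$-integral over $\tRe(S_{-})=1$ also converges and contributes $O(q^{3/2+o(1)})$ since $|(q^{3/2}/\eta)^{S_{-}}|\leq q^{3/2}$ for $\eta\geq 1$ (which is the case in context, as $\eta=a_{2}^{3}b_{2}^{2}m\geq 1$). Combining these estimates yields $|V_{\al,\bb}(\xi,\eta;q)|\ll_{T,k}q^{3/2+o(1)}q^{-\eps T}$, and choosing $T=(A+2)/\eps$, a constant depending only on $A$ and $\eps$, produces the desired bound $V_{\al,\bb}(\xi,\eta;q)\ll q^{-A}$. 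The main technical obstacle is the bookkeeping of the change of variables and the verification that no residues are collected when shifting contours; once that is in place, the Stirling bounds are entirely routine, with the $T$-dependent constants absorbed into the implied constant.
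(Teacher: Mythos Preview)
Your proof is correct and follows essentially the same approach as the paper: after writing $V$ as a double integral, you perform the change of variables $S_{+}=s+it$, $S_{-}=s-it$ (the paper writes $w,z$), which decouples the $\xi$- and $\eta$-dependence, and then shift the appropriate contour far to the right. Your version supplies more detail than the paper's (which simply says ``we move the contour integral over $w$ to the far right''), and the details you provide---holomorphy of the Gamma factors, location of the pole of $(S_{+}+S_{-})^{-1}$, and the Stirling bounds---are all in order.
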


\begin{proof}
From the definition of $W$ and $V$ and a change of variables ($s + it = w, s- it = z$), we can write $V_{\al, \bb}(\xi, \eta; \mu)$ as
\est{ 
&  \pr{\frac{q}{4\pi^2}}^{\delta(\al, \bb)} \frac{4\pi}{(2\pi i)^2} \int_{(0)} \prod_{j = 1}^3 \Gamma \pr{w + \tfrac {k-1}2 + \alpha_j }  \pr{\frac{q^{\frac 32}}{ (4\pi^2)^{\frac 32} \xi} }^{w}  \\
& \hskip 2in \times \int_{(1)}   \prod_{j = 1}^3 \Gamma \pr{z + \tfrac {k-1}2 - \beta_j }\pr{\frac{q^{\frac 32}}{ (4\pi^2)^{\frac 32} \eta} }^{z}  H\pr{\tfrac{z + w}{2}; \al, \bb}  \> \frac{dz \> dw}{z + w}.  }
When $\xi \gg q^{\frac 32 + \eps},$ we move the contour integral over $w$ to the far right, and similarly, when $\eta \gg q^{\frac 32 + \eps}$, we move the contour integral over $z$ to the far right.  The lemma then follows.
\end{proof}

\section{Setup for the proof of \ref{thm:mainmoment} and diagonal terms} \label{sec:setupmoment}

From Lemma \ref{lem:approxV}, we have that for $\alpha_i, \beta_i \ll 1/\log q,$
	\es{\label{eqn:HM6} H(0;\al,\bb) \mathcal M_6(q) = \frac{2}{\phi(q)}  \sum_{\substack{\chi \mod q \\ \chi(-1) = (-1)^k}} {\sum_{f \in \mathcal H_\chi}}^h  \{\Lambda_1\pr{f; \al, \bb} + \Lambda_1\pr{f; \bb, \al}\}.}
	 Therefore, to evaluate $\mathcal M_6(q)$, it is sufficient to compute asymptotically
\es{\label{eqn:MainboundLambda1} 
\mathscr M_1(q; 
\al, \bb) := \frac{2}{\phi(q)}  \sum_{\substack{\chi \mod q \\ \chi(-1) = (-1)^k}} {\sum_{f \in \mathcal H_\chi}}^h  \Lambda_1\pr{f; \al, \bb}.}
Applying the Petersson's formula, we obtain that
\est{\sumh_{f \in \mathcal H_\chi } \overline{\lambda}_f(a_2m) \lambda_f(a_1n) = \delta_{a_2m = a_1n} + \sigma_\chi(a_2m, a_1n),}
where $\sigma_\chi(a_2m, a_1n)$ is defined as in (\ref{lem:orthogonality}).
We then write
\es{\label{decomM1} \mathscr M_1(q; \al, \bb) = \mathscr D(q; \al, \bb) + \mathscr K(q; \al, \bb),} 
where $\mathscr D(q; \al, \bb)$ is the diagonal contribution from $\delta_{a_2m = a_1n},$ and $\mathscr K(q; \al, \bb)$ is the contribution from $\sigma_\chi(a_2m, a_1n).$

In Section \ref{sec:diag} below, we will show that the term $\mathscr D(q; \al, \bb)$ contributes one of the twenty terms in Conjecture \ref{conj:mainterm}, specifically the term corresponding to $\mathcal M(q; \al + it, \bb + it).$ Moreover, $\mathscr K(q;\al, \bb)$ gives another nine terms in the conjecture, namely those transpositions in $S_6/S_3 \times S_3$ which switches $\alpha_i$ and $\beta_j$ for a fixed $i, j = 1, 2, 3$.   We explicitly work out one of these terms in Proposition \ref{prop:mainTM}. Similarly, $\mathscr D(q; \bb, \al)$ gives rise to the term corresponding to $\mathcal M(q; \bb + it, \al + it),$ and the last nine expressions arise from $\mathscr K(q; \bb, \al)$.

\subsection{Evaluating the diagonal terms $\mathscr D (q; \al, \bb)$} \label{sec:diag}
We recall that
\est{ 
\mathscr D(q; \al, \bb) &= \frac{2}{\phi(q)} \sum_{\substack{\chi \mod q \\ \chi(-1) =(-1)^k}} \sumsix_{\substack{a_1, b_1, a_2, b_2, m, n \geq 1 \\ a_1n = a_2m}} \chi(a_1b_1) \overline{\chi}(a_2b_2) \frac{ \mathscr B(a_1, b_1; \al)}{a_1b_1}\frac{ \mathscr B(a_2, b_2; -\bb)}{a_2b_2} \\
& \hskip 2in \times  \frac{\sigma_3(n;\al)\sigma_3(m; -\bb)}{(a_1n)^{\frac 12} (a_2m)^{\frac 12}} V_{\al, \bb}\pr{a_1^3 b_1^2 n, a_2^3b_2^2m; q}.}
We will compute the diagonal contribution in the following lemma.
\begin{lem} \label{lem:diagonal} With the same notations as above, we have
\est{\mathscr D(q; \al, \bb) = H(0; \al, \bb)\int_{-\infty}^{\infty} \mathcal M(q; \al + it, \bb + it) \> dt +  O\pr{q^{-3/4 + \eps}}.}
\end{lem}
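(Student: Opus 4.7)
The plan is to first apply the Dirichlet character orthogonality from Lemma \ref{lem:orthogonality} to the inner sum over $\chi$, decomposing $\mathscr D(q;\al,\bb)$ into three pieces: (a) $a_1b_1 = a_2b_2$ exactly; (b) $a_1b_1 \equiv a_2b_2 \pmod{q}$ with $a_1b_1 \neq a_2b_2$; and (c) $a_1b_1 \equiv -a_2b_2 \pmod{q}$; each carrying the coprimality $(a_ib_j,q)=1$. By the rapid decay in Lemma \ref{lem:decayV}, up to an error of $O(q^{-A})$ for any $A>0$, one may restrict to $a_i^3 b_i^2 \cdot n_i \leq q^{3/2+\eps}$ (with $n_i\in\{n,m\}$); using $n,m\geq 1$ together with the elementary inequality $(a_ib_i)^2 \leq a_i^3b_i^2$, this forces $a_ib_i \leq q^{3/4+\eps} < q$ for $q$ sufficiently large. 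Consequently both $|a_1b_1 - a_2b_2|$ and $a_1b_1 + a_2b_2$ are strictly less than $q$, so cases (b) and (c) are vacuous in the effective range, contributing only $O(q^{-A})$.

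For the main case (a), the constraints $a_1b_1 = a_2b_2$ and $a_1n = a_2m$ together yield $a_1^3b_1^2n = a_2^3b_2^2m =: X$, and hence
\est{
V_{\al,\bb}(X,X;q) = \pr{\frac{q}{4\pi^2}}^{\delta(\al,\bb)} \int_{-\infty}^{\infty} W\pr{\frac{X^2 (4\pi^2)^3}{q^3};\al+it,\bb+it}\, dt.
}
Inserting the Mellin representation of $W$ and interchanging summation with integration, the resulting Dirichlet sum over $(a_1,b_1,a_2,b_2,n,m)$ subject to the diagonal constraints and $(a_ib_j,q)=1$ is recognized as $\mathcal C(\tfrac12+s;\al,\bb)$, using the identity $(a_1b_1)^{2s}(a_2b_2)^{2s}(a_1n)^s(a_2m)^s = X^{2s}$ valid on the diagonal.

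The final step is to shift the $s$-contour past the simple pole at $s=0$ to $\tRe s = -\tfrac14+\eps$. The residue at $s=0$ equals $H(0;\al,\bb)\,G(\tfrac12;\al+it,\bb+it)\,\mathcal C(\tfrac12;\al,\bb)$, since $H$ depends only on the differences $\alpha_j - \beta_\ell$ and is therefore invariant under the common shift $(\al,\bb)\mapsto (\al+it,\bb+it)$. A key observation is that the same invariance holds term by term for $\mathcal C$ on the diagonal, because the shift introduces a factor $(a_2^3b_2^2m / a_1^3b_1^2n)^{it} = 1$; hence $\mathcal C(\tfrac12;\al,\bb) = \mathcal C(\tfrac12;\al+it,\bb+it)$. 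Combining with $\delta(\al+it,\bb+it) = \delta(\al,\bb)$ and the definition \eqref{def:Mqalbb} of $\mathcal M$, the residue integrated over $t$ gives precisely $H(0;\al,\bb) \int_{-\infty}^{\infty} \mathcal M(q;\al+it,\bb+it)\, dt$.

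The main obstacle will be justifying the contour shift and bounding the remainder integral. The only potential poles between $\tRe s = 0$ and $\tRe s = -\tfrac14+\eps$ arise from the zeta factors in $\mathcal Z$ (appearing in $\mathcal C = \mathcal A\mathcal Z$) at $s = (\beta_j - \alpha_i)/2$; these are cancelled by the order-$3$ zeros of $H(s;\al,\bb)$ at the same points, so no additional residues are encountered. On the shifted contour, $q^{3s}$ contributes $q^{-3/4+3\eps}$, while the remaining integrand is controlled via the absolute convergence of $\mathcal A$ in a neighborhood of $\tRe s = 0$, convexity bounds on the shifted zeta factors in $\mathcal Z$, and exponential Stirling decay of the Gamma factors in $G$ in both the $t$ and $\tIm s$ directions. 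This yields the stated error term $O(q^{-3/4+\eps})$.
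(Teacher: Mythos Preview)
Your proposal is correct and follows essentially the same approach as the paper's proof: apply character orthogonality, use the decay of $V_{\al,\bb}$ from Lemma~\ref{lem:decayV} together with $(a_ib_i)^2 \le a_i^3b_i^2$ to force $a_ib_i < q$ and thereby reduce to the true diagonal $a_1b_1 = a_2b_2$, recognize the resulting sum as $\mathcal C(\tfrac12+s;\al,\bb)=\mathcal A\mathcal Z(\tfrac12+s;\al,\bb)$, and shift the contour to $\tRe s = -\tfrac14+\eps$ using the cancellation of the zeta poles by the zeros of $H$. Your write-up is slightly more explicit than the paper's in a few places (the $it$-invariance of $\mathcal C$ on the diagonal, the Stirling and convexity bounds on the shifted line), but the argument is the same.
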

\begin{proof}
We apply the orthogonality relation for Dirichlet characters in (\ref{eqn:orthodirichlet}) and obtain that  for $(a_ib_i, q) = 1,$
\est{\frac{2}{\phi(q)}\sum_{\substack{\chi \mod q \\ \chi(-1) =(-1)^k}} \chi(a_1b_1) \overline{\chi} (a_2b_2)  = \left\{\begin{array}{ll} 1 & {\rm if} \ a_1b_1 \equiv a_2b_2 \mod q \\
(-1)^k & {\rm if} \ a_1b_1 \equiv - a_2b_2 \mod q \\
0 & {\rm otherwise}. \end{array} \right.}
When $a_1b_1 \geq q/4$ or $a_2b_2 \geq q/4$, we have that  $a_1^3b_1^2n \geq q^2/16$ or $a_2^3b_2^2m \geq q^2/16$. From Lemma \ref{lem:decayV}, $V_{\al, \bb}(a_1^3b_1^2n, a_2^3b_2^2m; q) \ll q^{-A}$ in that range so the contribution from these terms is negligible. Hence the main contribution from $\mathscr D(q; \al, \bb)$ comes from the terms with $a_1b_1 = a_2b_2$ when $(a_ib_i, q) = 1,$ and 
\est{ \mathscr D(q; \al, \bb) &=  \sumsix_{\substack{a_1, b_1, a_2, b_2, m, n \geq 1 \\ a_1n = a_2m \\ a_1b_1 = a_2b_2 \\ (a_ib_i, q) = 1 }}  \frac{ \mathscr B(a_1, b_1; \al)}{a_1b_1} \frac{ \mathscr B(a_2, b_2; -\bb)}{a_2b_2}\\
& \hskip 1in \times  \frac{\sigma_3(n;\al)\sigma_3(m; -\bb)}{(a_1n)^{\frac 12} (a_2m)^{\frac 12}} V_{\al, \bb}\pr{a_1^3 b_1^2 n, a_2^3b_2^2m; q} + O(q^{-A}).}
Since $a_1b_1 = a_2b_2$ and $a_1n = a_2m$, $a_1^3b_1^2n = a_2^3 b_2^2 m.$ Therefore, $\mathscr D(q; \al, \bb)$ can be written as
\est{ &  \frac{1}{2\pi i}\pr{\frac{q}{4\pi^2}}^{\delta(\al, \bb)}\int_{-\infty}^{\infty} \int_{(1)} G\pr{\tfrac 12 + s; \al + it, \bb + it} H(s; \al, \bb) \pr{\frac{q}{4\pi^2}}^{3s} \mathcal A \mathcal Z\pr{\tfrac 12 + s, \al, \bb} \frac{ds}{s} \> dt,}
where we have used Equations (\ref{def:Cs}) to (\ref{def:mathcalAs}). 

Note that $\mathcal A(s; \al, \bb)$ is absolutely convergent when $\tRe(s) > \frac 14 + \eps.$  Furthermore, the pole at $s = -(\alpha_i - \beta_j)/2$ from the zeta factor $\mathcal Z\pr{\tfrac 12 + s; \al, \bb}$ is cancelled by the zero at the same point from $H(s; \al, \bb).$  Thus, in the region $\tRe(s) > -\tfrac 14 + \eps,$ the integrand is analytic except for a simple pole at $s = 0.$   Moving the line of integration to $\tRe(s) = -1/4 + \eps,$ we obtain that $\mathscr D(q; \al, \bb)$ is
\est{ \pr{\frac{q}{4\pi^2}}^{\delta(\al, \bb)}\int_{-\infty}^{\infty}  G\pr{\tfrac 12; \al + it, \bb + it} H(0; \al, \bb)\mathcal A \mathcal Z \pr{\tfrac 12; \al, \bb} \> dt + O\pr{q^{-3/4 + \eps}}.}
The lemma now follows from (\ref{def:Mqalbb}) and upon noting that $\mathcal A \mathcal Z \pr{\tfrac 12; \al, \bb} = \mathcal A \mathcal Z \pr{\tfrac 12; \al + it, \bb + it}.$ 
\end{proof}

\section{Setup for the off-diagonal terms $\mathscr K(q; \al, \bb)$} \label{sec:offdiagsetup}

Define $\mathcal K f = i^{-k} f + i^k \bar{f}$. If $g$ is a real function, then $g\mathcal K f = \mathcal K (gf).$ Applying orthogonality relation for $\chi$ from (\ref{eqn:orthodirichlet}) to $\mathscr K(q; \al, \bb)$, we obtain that 

\est{& \mathscr K(q; \al, \bb) = 2\pi \sumfour_{\substack{a_1, b_1, a_2, b_2 \geq 1 \\ (a_1a_2b_1b_2, q) = 1}}  \frac{ \mathscr B(a_1, b_1; \al)}{a_1b_1} \frac{ \mathscr B(a_2, b_2; -\bb)}{a_2b_2} \sumtwo_{\substack{n, m \geq 1}} \frac{ \sigma_3(n;\al)}{(a_1n)^{1/2}} \frac{ \sigma_3(m; -\bb)}{(a_2m)^{1/2}} \\
& \hskip 0.3in \times V_{\al, \bb}\pr{a_1^3 b_1^2 n, a_2^3b_2^2m; q} \sum_{c = 1}^\infty \frac{1}{cq} J_{k-1} \pr{\frac{4\pi}{cq} \sqrt{a_2ma_1n} }    \mathcal K \sumstar_{\substack{a \mod{cq} \\ a \equiv \overline{a_1b_1}a_2b_2\mod q} } e\pr{\frac{aa_2m + \bar{a}a_1n}{cq}} ,}
where $\sumstar $ denotes a sum over reduced residues. Let $f$ be a smooth partition of unity such that 
\est{\sumd_M f\pr{\frac m M} = 1, }
where $f$ is supported in [1/2, 3] and $\sumd_M$ denotes an dyadic sum over $M = 2^k$, $k\geq 0$.

Rearranging the sum, we have
\est{\mathscr K(q; \al, \bb) = \frac {2\pi}{q}&\sumfour_{\substack{a_1, b_1, a_2, b_2 \geq 1 \\ (a_1a_2b_1b_2, q) = 1}} \frac{ \mathscr B(a_1, b_1; \al)}{a_1^{\frac 32}b_1} \frac{ \mathscr B(a_2, b_2; -\bb)}{a_2^{\frac 32}b_2} \sumd_M \sumd_N S(\mathbf a, \mathbf b, M, N; \al, \bb), }
where 
\est{S(\mathbf a, \mathbf b, M, N ; \al, \bb) &:= \sum_{c = 1}^{\infty}\frac{1}{c} \sumtwo_{m, n \geq 1} \frac{\sigma_3(n;\al)\sigma_3(m; -\bb)}{n^{\frac 12}m^{\frac 12}}\mathcal F(\mathbf a, \mathbf b, m, n, c),}

\es{\label{def:F} \mathcal F(\mathbf a, \mathbf b, m, n, c) := 
&  \ \mathcal G(\mathbf a, \mathbf b, m,n, c) \ \mathcal K \sumstar_{\substack{a \mod{cq} \\ a \equiv \overline{a_1b_1}a_2b_2\mod q} } e\pr{\frac{aa_2m + \bar{a}a_1n}{cq}} J_{k-1} \pr{\frac{4\pi}{cq} \sqrt{a_2ma_1n} } ,}
and
\es{\label{def:G} \mathcal G(\mathbf a, \mathbf b, m, n, c) := 
	&   V_{\al, \bb}\pr{a_1^3 b_1^2 n, a_2^3b_2^2m; q}  f\pr{\frac{m}{M}}f\pr{\frac{n}{N}} .}

As described in the outline of the paper, we now take the following steps to compute $\mathscr K(q; \al, \bb).$ \\

We write 
\es{\label{decompK}\mathscr K(q; \al, \bb) = \mathscr K_M(q; \al, \bb) + \mathscr K_E(q; \al,\bb),}
where $\mathscr K_M(q; \al, \bb)$ is the contribution from the sum over $c < C,$ where $C = \frac{\sqrt{a_1a_2MN}}{q^{\frac 23}},$ and $\mathscr K_E(q; \al, \bb)$ is the rest. We will show that the contribution from $\mathscr K_E(q; \al, \bb)$ is small in Section \ref{sec:truncationC}. This is possible by the decay of the Bessel functions and such a truncation bounds the size of the conductor inside the exponential sum.

For $\mathscr K_M(q; \al, \bb)$, we start by reducing the conductor inside the exponential sum from $cq$ to $c$ in Section \ref{sec:treatmentexp}.  This step takes advantage of the average over $\chi \bmod q$.

Before we show each step, we provide properties of Bessel functions
that will be used later. 
\begin{lem} \label{lem:Besselresult}
We have
 \es{\label{asympJxbig} J_{k-1} (2\pi x) = \frac{1}{\pi\sqrt x}\pg{ W(2\pi x)\e{x - \frac k4 + \frac 18}  +  \overline{W}(2\pi x)\e{-x + \frac k4 - \frac 18}},}
where $W^{(j)}(x) \ll_{j, k} x^{-j} $.  Moreover, 
\es{\label{asympJxSm} J_{k-1} (2 x) = \sum_{\ell = 0}^{\infty} (-1)^{\ell} \frac{x^{2\ell + k - 1}}{\ell! (\ell + k - 1)!},}
and
\es{\label{bound:Bessel1}  J_{k-1}(x) \ll \min (x^{-1/2}, x^{k-1}).} 
Finally, the following integration is used when calculating the main terms of $\mathscr K(q; \al, \bb)$. If $\alpha, \beta, \gamma > 0$, then
\es{\label{int:bessel} \mathcal K \int_0^\infty \e{(\alpha + \beta)x + \gamma x^{-1}}J_{k - 1}(4\pi \sqrt{\alpha \beta x})\> \frac{dx}{x} = 
2\pi J_{k -1}(4\pi\sqrt{\alpha\gamma})J_{k -1}(4\pi\sqrt{\beta\gamma}),
}
and the integration is 0 if $\alpha, \beta > 0$ and $\gamma \leq 0.$
\end{lem}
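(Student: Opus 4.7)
This lemma collects four standard facts about the Bessel function $J_{k-1}$, so the plan is simply to invoke classical results from the theory of special functions.

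Formula (\ref{asympJxSm}) is the defining power series of $J_{k-1}$ and requires no argument. Formula (\ref{asympJxbig}) is obtained from the standard large-argument asymptotic expansion of $J_\nu$,
\begin{equation*}
J_{k-1}(y) = \left(\frac{2}{\pi y}\right)^{1/2}\cos\!\left(y - \tfrac{\pi(k-1)}{2} - \tfrac{\pi}{4}\right) + O(y^{-3/2}),
\end{equation*}
found, e.g., in Watson's \emph{Treatise on the Theory of Bessel Functions}. Setting $y = 2\pi x$, expanding the cosine as a sum of two exponentials, and absorbing the full asymptotic series into smooth envelopes $W(2\pi x)$ and $\overline{W}(2\pi x)$ yields the stated form, with the derivative bounds $W^{(j)}(x) \ll_{j,k} x^{-j}$ following by termwise differentiation of the asymptotic series. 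Formula (\ref{bound:Bessel1}) then follows by combining the previous two: for $0 < x \le 1$ the series (\ref{asympJxSm}) is dominated by its $\ell = 0$ term, giving $J_{k-1}(x) \ll x^{k-1}$; for $x > 1$ the asymptotic (\ref{asympJxbig}) gives $J_{k-1}(x) \ll x^{-1/2}$.

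The only piece requiring a bit of work is the integral identity (\ref{int:bessel}), which is a standard tool in the analysis of the Petersson trace formula. The approach I would take is to substitute an integral representation of $J_{k-1}$ (e.g.\ the Mellin-Barnes integral, or for integer $k-1$ the Bessel integral $J_{k-1}(z) = \frac{1}{2\pi}\int_{-\pi}^{\pi} e^{i(z\sin\theta - (k-1)\theta)}\, d\theta$) into the left-hand side, interchange the order of integration, and evaluate the inner integral explicitly. A cleaner alternative is to start from the classical Laplace-type identity for products of Bessel functions
\begin{equation*}
\int_0^\infty J_{k-1}\!\left(2\sqrt{ax}\right) J_{k-1}\!\left(2\sqrt{bx}\right) e^{-sx}\, dx = \frac{1}{s}\, e^{-(a+b)/s}\, I_{k-1}\!\left(\frac{2\sqrt{ab}}{s}\right),
\end{equation*}
and analytically continue $s$ to the imaginary axis, where $I_{k-1}$ converts into $J_{k-1}$ and the $\mathcal K$ operator assembles the correct combination of the two sign branches (corresponding to the $i^{\pm k}$ factors in the definition of $\mathcal K$). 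The vanishing when $\gamma \le 0$ reflects the absence of a positive stationary point for the phase $(\alpha + \beta)x + \gamma/x$, and can be verified by shifting the contour in $x$ into the appropriate half-plane.

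The main obstacle is purely bookkeeping: matching the various normalization constants, phase conventions, and the precise role of the operator $\mathcal K$, so that the combination of the two sign branches produces the symmetric product $2\pi J_{k-1}(4\pi\sqrt{\alpha\gamma}) J_{k-1}(4\pi\sqrt{\beta\gamma})$. No new analytic input is required; the identity (\ref{int:bessel}) appears, in essentially this form, throughout the Petersson/Kuznetsov formula literature.
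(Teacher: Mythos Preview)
Your proposal is correct and matches the paper's approach: the paper simply states that these results are standard, citing Watson's \emph{Treatise} for the first three claims and Oberhettinger's \emph{Tables of Bessel Transforms} for the integral identity~(\ref{int:bessel}). You provide more detail on how one might actually derive each fact, but the substance is the same---these are all classical results to be looked up rather than proved from scratch.
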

These results are standard.  We refer the reader to \cite{Watt} for the first three claims, and to \cite{Ob} for the last claim.

\subsection{Truncating the sum over $c$.} \label{sec:truncationC}
In this section we show that we can truncate the sum over $c$ in $S(\mathbf a, \mathbf b, M, N; \al, \bb)$ with small error contribution. 
\begin{prop} \label{prop:truncateC} Let $C = \frac{\sqrt{a_1a_2MN}}{q^{\frac 23}}$, $k \geq 5$, and $\mathcal F(\mathbf a, \mathbf b, m, n, c)$ be defined as in (\ref{def:F}). Further, let
\est{\mathscr K_E(q; \al, \bb) = \frac {2\pi}{q}&\sumfour_{\substack{a_1, b_1, a_2, b_2 \geq 1 \\ (a_1a_2b_1b_2, q) = 1}} \frac{ \mathscr B(a_1, b_1; \al)}{a_1^{\frac 32}b_1} \frac{ \mathscr B(a_2, b_2; -\bb)}{a_2^{\frac 32}b_2} \sumd_M \sumd_N S_E(\mathbf a, \mathbf b, M, N; \al, \bb), }
where 
\est{S_E(\mathbf a, \mathbf b, M, N; \al, \bb) &= \sum_{c \geq C} \frac{1}{c} \sumtwo_{m, n \geq 1} \frac{\sigma_3(n;\al)\sigma_3(m; -\bb)}{n^{\frac 12}m^{\frac 12}}\mathcal F(\mathbf a, \mathbf b, m, n, c).}

Then 
$$ \mathscr K_E(q;\al, \bb) \ll q^{-\frac 5{12} + \eps}.$$
\end{prop}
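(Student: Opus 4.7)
The plan is to bound $\mathscr K_E$ trivially by absolute values, extracting two key sources of saving: decay of the Bessel function $J_{k-1}$ at small arguments, and a Chinese remainder theorem reduction of the inner exponential sum from modulus $cq$ to modulus $c$. First, on the support of $f(m/M)f(n/N)$ one has $m \asymp M$, $n \asymp N$, so for $c \geq C = \sqrt{a_1 a_2 MN}/q^{2/3}$
\begin{equation*}
\frac{4\pi\sqrt{a_1 a_2 mn}}{cq} \ll \frac{\sqrt{a_1 a_2 MN}}{cq} = \frac{C}{c\, q^{1/3}} \leq q^{-1/3},
\end{equation*}
and then the bound $J_{k-1}(x) \ll x^{k-1}$ from (\ref{bound:Bessel1}) yields
\begin{equation*}
J_{k-1}\!\left(\frac{4\pi\sqrt{a_1 a_2 mn}}{cq}\right) \ll \left(\frac{C}{c\, q^{1/3}}\right)^{k-1}.
\end{equation*}

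Second, since $q$ is prime and $(a_1 a_2 b_1 b_2, q) = 1$, for $(c, q) = 1$ the Chinese remainder theorem factorizes the sum over $a \bmod cq$ subject to $a \equiv \overline{a_1 b_1} a_2 b_2 \bmod q$ into an explicit phase modulo $q$ times a classical Kloosterman sum modulo $c$. Weil's bound then gives
\begin{equation*}
\biggl| \sumstar_{\substack{a \bmod cq \\ a \equiv \overline{a_1 b_1} a_2 b_2 \bmod q}} e\!\left(\frac{a a_2 m + \bar a a_1 n}{cq}\right) \biggr| \ll c^{1/2 + \eps}(a_1 n, a_2 m, c)^{1/2},
\end{equation*}
crucially \emph{without} the extra $q^{1/2}$ factor that applying Weil directly on modulus $cq$ would incur. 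The thin set of $c$ with $q \mid c$ contributes a strictly smaller amount and is handled by a separate analogous estimate.

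Combining these two steps with the trivial bounds $|V_{\al,\bb}| \ll 1$ and $|\sigma_3(n; \al)| \ll n^\eps$, and summing $m \asymp M$, $n \asymp N$, $c \geq C$ via $\sum_{c \geq C} c^{1/2 - k} \ll C^{3/2 - k}$ (valid for $k \geq 2$), one obtains
\begin{equation*}
|S_E(\boldab, M, N; \al, \bb)| \ll q^\eps \sqrt{MN}\, C^{1/2}\, q^{-(k-1)/3} \ll (a_1 a_2)^{1/4} (MN)^{3/4}\, q^{-k/3 + \eps}.
\end{equation*}
Lemma \ref{lem:decayV} then restricts the dyadic sums to $N \ll q^{3/2 + \eps}/(a_1^3 b_1^2)$ and $M \ll q^{3/2 + \eps}/(a_2^3 b_2^2)$, and $|\mathscr B(a, b; \cdot)| \ll b^\eps$ since $\mu(a)$ forces $a$ squarefree. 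The resulting sum over $a_1, a_2, b_1, b_2$ converges absolutely, and with $k = 5$ the final exponent of $q$ is $-1 + 9/4 - 5/3 = -5/12$, giving the claim.

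The main obstacle is the second step: without the CRT-induced conductor drop, the Weil bound on modulus $cq$ would cost an additional factor of $q^{1/2}$, turning the final exponent into $+1/12$ instead of $-5/12$. The restriction $k \geq 5$ is in turn dictated by the Bessel decay, which supplies $q^{-(k-1)/3}$ of savings; this is precisely calibrated to compensate the positive $q$-power losses coming from the divisor sums and the dyadic ranges of $M$ and $N$.
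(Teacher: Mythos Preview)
Your proof is correct and follows essentially the same route as the paper's: both arguments use the bound $J_{k-1}(x)\ll x^{k-1}$ on the Bessel function, the CRT reduction of the restricted Kloosterman sum modulo $cq$ to modulus $c$ (with the Weil bound $c^{1/2+\eps}(a_1n,a_2m,c)^{1/2}$) when $(c,q)=1$, a separate estimate for the thin set $q\mid c$, and then the truncation of the dyadic ranges $M,N$ via the decay of $V_{\al,\bb}$. Your bookkeeping of the exponents via $S_E\ll (a_1a_2)^{1/4}(MN)^{3/4}q^{-k/3+\eps}$ is a slightly more compact repackaging of the paper's intermediate bound $q^{7/12+\eps}$ for $S_E$, but the two computations are equivalent.
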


\begin{proof}
Note that the contribution of terms when $a_1^3b_1^2N$ or $a_2^3b_2^2M$ is $\gg q^{\frac 32 + \eps}$ is $\ll_{\eps, A} q^{-A}$, due to the fast decay rate of $\mathcal G(\mathbf a, \mathbf b, m, n, c)$ defined in (\ref{def:G}).  Thus we will discount such terms in the rest of the proof. 
For $k\geq 5$, we let
\est{
\tD(m, n) &= \tD(m, n, \mathbf a, \mathbf b; C)  \\
&= \sum_{c\geq C} \frac{1}{c} \ \mathcal K \sumstar_{\substack{a \mod{cq} \\ a \equiv \overline{a_1b_1}a_2b_2\mod q} } e\pr{\frac{aa_2m + \bar{a}a_1n}{cq}} J\bfrac{4 \pi \sqrt{a_1a_2 mn}}{cq}  \\
&= \tD_1(m, n) + \tD_2(m, n),
}
where
\est{
\tD_1(m, n) := \sum_{\substack{c\geq C\\(c, q) = 1}} \frac{1}{c} \  \mathcal K \sumstar_{\substack{a \mod{cq} \\ a \equiv \overline{a_1b_1}a_2b_2\mod q} } e\pr{\frac{aa_2m + \bar{a}a_1n}{cq}} J\bfrac{4 \pi \sqrt{a_1a_2 mn}}{cq},
}
and $\tD_2(m, n)$ is the sum of the terms where $(c, q)>1$. Now for $(c, q) = 1$, the Weil bound gives
\est{
\sumstar_{\substack{a \mod{cq} \\ a \equiv \overline{a_1b_1}a_2b_2\mod q} }e\pr{\frac{aa_2m + \bar{a}a_1n}{cq}} \ll c^{1/2+\eps} \sqrt{(a_1m, a_2n, c)},
}
and from the bound in (\ref{bound:Bessel1}), we have
\est{
J\bfrac{4 \pi \sqrt{a_1a_2 mn}}{cq} \ll \bfrac{\sqrt{a_1a_2mn}}{cq}^{k-1}.
}

When $a_1^3b_1^2N$ and $a_2^3b_2^2M \ll q^{\frac 32 + \eps},$ we obtain that for $k \geq 5,$

\est{
\sum_{m, n} & \frac{\sigma_3(n;\al)\sigma_3(m; -\bb)}{\sqrt{mn}} \tD_1(m, n) \mathcal G(\mathbf a, \mathbf b, m, n, c) \ll q^{\eps} M^{\frac{k}{2}}N^{\frac k2}\sum_{c\geq C} \frac{1}{c} c^{1/2+\eps} \bfrac{\sqrt{a_1a_2}}{cq}^{k-1} \ll q^{\frac 7{12} + \eps}.
}
In the above, we have used that $\max (a_1^3b_1^2N, a_2^3b_2^2M) \ll q^{\frac 32 + \eps}$.  Then summing over $a_1, b_1, a_2, b_2$ gives the desired bound.

Now, for $(c, q) >1$, we use the bound
\est{
\sumstar_{\substack{a \mod{cq} \\ a \equiv \overline{a_1b_1}a_2b_2\mod q} }e\pr{\frac{aa_2m + \bar{a}a_1n}{cq}} \ll (cq)^{1/2+\eps} \sqrt{(a_1m, a_2n, cq)}.
}
Hence, for $q|c$ and $k \geq 5,$ we obtain
\est{
\sum_{m, n} & \frac{\sigma_3(n;\al)\sigma_3(m; -\bb)}{\sqrt{mn}} \tD_2(m, n) \mathcal G(\mathbf a, \mathbf b, m, n, c) \ll q^{\eps}M^{\frac k2}N^{\frac k2} \sum_{\substack{c\geq C\\q|c}} \frac{1}{c} (qc)^{\frac 12} \bfrac{\sqrt{a_1a_2}}{cq}^{k-1}  \ll q^{\frac 1{12} + \eps}.
} 
Then summing over $a_1, b_1, a_2, b_2$ gives the desired bound.  

\end{proof}

From this proposition,  we are left to consider only 
\es{\label{def:KM} \mathscr K_M(q; \al, \bb) = \frac {2\pi}{q}&\sumfour_{\substack{a_1, b_1, a_2, b_2 \geq 1 \\ (a_1a_2b_1b_2, q) = 1}} \frac{ \mathscr B(a_1, b_1; \al)}{a_1^{\frac 32}b_1} \frac{ \mathscr B(a_2, b_2; -\bb)}{a_2^{\frac 32}b_2} \sumd_M \sumd_N S_M(\mathbf a, \mathbf b, M, N; \al, \bb), }
where 
\es{\label{def:SM} S_M(\mathbf a, \mathbf b, M, N; \al, \bb) &:= \sum_{c < C} \frac{1}{c} \sum_{m, n \geq 1 } \frac{\sigma_3(n;\al)\sigma_3(m; -\bb)}{n^{\frac 12}m^{\frac 12}}\mathcal F(\mathbf a, \mathbf b, m, n, c).}

\subsection{Treatment of the exponential sum} \label{sec:treatmentexp}

Next, we reduce the conductor in the exponential sum in $\mathcal F(\mathbf a, \mathbf b, m, n, c)$ before applying Voronoi summation.

\begin{lemma}[Treatment of the exponential sum] 
	Assume that $(c, q) = 1$ and let 
	\est{
		Y := \sumstar_{\substack{a \mod{cq} \\ a \, \equiv \, \overline{a_1b_1}a_2b_2 \mod q} }e\pr{\frac{au + \bar{a}v}{cq}}.
	}
Then we have 
\begin{eqnarray*}
	Y &=&e\bfrac{(a_2b_2)^2u + (a_1b_1)^2 v}{cqa_1b_1a_2b_2} \sumstar_{x \mod{c}} e\bfrac{\bar q (a_1b_1x - a_2b_2) u}{a_1b_1c} e\bfrac{\bar q (a_2b_2\bar x - a_1b_1) v}{a_2b_2c}.
\end{eqnarray*}
\end{lemma}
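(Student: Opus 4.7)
The strategy is Chinese Remainder Theorem to split the modulus $cq$, followed by reciprocity to recognize the claimed form. I write $A = a_1 b_1$ and $B = a_2 b_2$ throughout, noting that $(AB, q) = 1$ by hypothesis.

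First, since $(c, q) = 1$, the Chinese Remainder Theorem identifies $(\mathbb{Z}/cq)^{\times}$ with $(\mathbb{Z}/c)^{\times} \times (\mathbb{Z}/q)^{\times}$. The constraint $a \equiv \overline{A} B \pmod q$ fixes the $q$-coordinate while $x := a \bmod c$ ranges freely over $(\mathbb{Z}/c)^{\times}$, and correspondingly $\overline{a} \equiv A\overline{B} \pmod q$ and $\overline{a} \equiv \overline{x} \pmod c$. Combining this with the standard reciprocity $\tfrac{1}{cq} \equiv \tfrac{\overline{c}}{q} + \tfrac{\overline{q}}{c} \pmod 1$ (where $c\overline{c} \equiv 1 \pmod q$ and $q \overline{q} \equiv 1 \pmod c$), I would factor the exponential and pull the $q$-part outside the sum to obtain
\est{
Y = e\pr{\frac{\overline{c}(\overline{A} B u + A \overline{B} v)}{q}} \sumstar_{x \bmod c} e\pr{\frac{\overline{q}(xu + \overline{x} v)}{c}}.
}

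Next, I would match this with the stated form. I would choose the integer representative of $\overline{q}$ that inverts $q$ modulo $ABc$ (possible since $(ABc, q) = 1$); writing $q\overline{q} = 1 + ABcK$ then gives $K \equiv -\overline{ABc} \pmod q$. Expanding the two summand factors on the claimed right-hand side and peeling off the contribution $\frac{\overline{q}(xu + \overline{x} v)}{c}$ leaves the residual exponent
\est{
\frac{B^2 u + A^2 v}{cqAB} - \frac{\overline{q} B u}{Ac} - \frac{\overline{q} A v}{Bc} = \frac{(B^2 u + A^2 v)(1 - q\overline{q})}{cqAB} = \frac{-K(B^2 u + A^2 v)}{q}.
}
Reducing $-K \equiv \overline{A}\,\overline{B}\,\overline{c} \pmod q$ and using $B^2 \overline{B} \equiv B$, $A^2 \overline{A} \equiv A \pmod q$, this collapses mod $1$ to $\frac{\overline{c}(\overline{A} B u + A \overline{B} v)}{q}$, matching the prefactor above and finishing the identity.

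The only subtlety, and the main (minor) obstacle, is that the right-hand side of the statement is well-defined only after an integer lift of $\overline{q}$ is fixed: shifting $\overline{q}$ by a multiple of $c$ alters the individual summand factors $\frac{\overline{q}(Ax-B)u}{Ac}$ and $\frac{\overline{q}(B\overline{x}-A)v}{Bc}$. The identity holds on the nose precisely when $\overline{q}$ inverts $q$ modulo $ABc$ rather than merely modulo $c$; once this convention is adopted every remaining step is routine bookkeeping with CRT and the reciprocity formula.
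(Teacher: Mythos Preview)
Your proof is correct and follows essentially the same route as the paper: CRT to separate the $q$- and $c$-parts of the exponential, then the reciprocity relation $\tfrac{\bar c}{q}+\tfrac{\bar q}{c}\equiv\tfrac{1}{cq}\pmod 1$ to obtain the stated form. The only organizational difference is that the paper works forward---applying reciprocity directly to the prefactor $e\bigl(\tfrac{\bar c(\overline{A}Bu+A\overline{B}v)}{q}\bigr)$---while you work backward by computing the residual exponent; these are the same calculation. Your remark that the individual factors on the right-hand side are only well-defined once $\bar q$ is taken to invert $q$ modulo $ABc$ (not just modulo $c$) is a genuine point the paper leaves implicit.
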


\begin{proof}
By Chinese Remainder Theorem, for each $a \mod{cq}$, there exist unique $x \mod{c}$ and $y \mod{q}$ such that 
\es{ \label{eqn:aqc}
a = xq\bar q + y c \bar c,
}
where $\bar q$ denotes the inverse of $q$ modulo $c$, and $\bar c$ denotes the inverse of $c$ modulo $q$.  Using (\ref{eqn:aqc}) and the reciprocity relation
\est{\frac{\overline{a}}{b} + \frac{\overline{b}}{a} \equiv \frac{1}{ab} \mod 1, }
where $(a, b) = 1,$  $\overline{a}$ is the inverse of $a \bmod b$, and $\overline{b}$ is the inverse of $b \bmod a$, we obtain that 
\begin{align*}
Y &=  e\bfrac{\overline{a_1b_1}a_2b_2 \bar c u + a_1b_1\overline{a_2b_2} \bar c v}{q} \sumstar_{x \mod{c}} e\bfrac{x\bar qu + \bar x \bar q v}{c}.
\end{align*}
Thus
\begin{eqnarray*}
Y &=& e\bfrac{(a_2b_2)^2u + (a_1b_1)^2 v}{cqa_1b_1a_2b_2} \sumstar_{x \mod{c}} e\bfrac{x\bar q u + \bar x \bar qv}{c} e\bfrac{-\bar q a_2b_2u }{ca_1b_1}e\bfrac{- \bar q a_1b_1 v}{ca_2b_2},
\end{eqnarray*}
and the lemma follows. 
\end{proof}

Note that when $c<C <q$, we automatically have $(c, q) = 1 $.  The point of this lemma is that we may treat $e\bfrac{(a_2b_2)^2u + (a_1b_1)^2 v}{cqa_1b_1a_2b_2}$ as a smooth function with small derivatives, while the other exponentials have conductor at most $ca_ib_i \leq q^{1+\eps}$ after truncation.  It should be noted however, that we are most concerned with the contribution from the transition region of the Bessel function, where the conductor $ca_ib_i$ should be thought of as around size $q^{1/2}$.

\section{Applying Voronoi Summation}\label{sec:appvoronoi}

To calculate $\mathscr K_M(q; \al, \bb)$ as defined in \eqref{def:KM}, we start by evaluating 
$S_M(\mathbf a, \mathbf b, M, N; \al, \bb)$ defined in (\ref{def:SM}).  We write
\est{S_M(\mathbf a, \mathbf b, M, N; \al, \bb) = \sum_{c < C} \sumstar_{x \mod{c}} \frac{1}{c} \pr{\Sc^+_{\al, \bb} (c, x) + \Sc^-_{\al, \bb}(c,x)},}
where 
\est{\Sc^\pm_{\al, \bb}(c, x) & = i^{\mp k} \sum_{m \geq 1} \frac{\sigma_3(m; -\bb)}{m^{\frac 12}} f\pr{\frac{m}{M}}   e\pr{\pm \frac{a_2^2b_2m }{cqa_1b_1} } e\pr{\pm \frac{\bar q (a_1b_1x - a_2b_2) a_2m}{a_1b_1c} }\times \\
	& \ \ \ \times \sum_{n \geq 1} \sigma_3(n;\al) F^\pm_{\al, \bb}(m, n, c) e\pr{\pm \frac{\bar q (a_2b_2\bar x - a_1b_1) a_1n}{a_2b_2c}},}
where
\est{ F^\pm_{\al, \bb}(m, n, c) = \frac{1}{n^{\frac 12}} V_{\al, \bb}\pr{a_1^3 b_1^2 n, a_2^3b_2^2m; q}  J_{k-1} \pr{\frac{4\pi}{cq} \sqrt{a_2ma_1n} }  \ e\pr{\pm \frac{ a_1^2b_1 n}{cqa_2b_2}}  f\pr{\frac{n}{N}}  . }

 Let 
 \begin{equation}\label{eqn:lambda1eta1}
 \frac{\lambda_1}{\eta_1} = \frac{\bar q (a_2b_2\bar x - a_1b_1) a_1}{a_2b_2c},
 \end{equation} and 
 \begin{equation} \label{eqn:lambda2eta2}
 \frac{\lambda_2}{\eta_2} = \frac{\bar q (a_1b_1x - a_2b_2) a_2}{a_1b_1c},
 \end{equation} where $(\lambda_1, \eta_1) = (\lambda_2, \eta_2) = 1.$ Moreover we define
 
 \est{ 
 	\mathcal V^\pm_{\al, \bb}(c; {\bf a}, {\bf b}, y, z) = \frac{1}{y^{1/2 }} \frac{1}{z^{1/2}} &V_{\al, \bb} \pr{a_1^3b_1^2y, a_2^3b_2^2z; q} J_{k-1} \pr{\frac{4\pi \sqrt{a_2za_1y}}{cq}} \times \\
 	&\times i^{\mp k}\e{\pm \frac{a_1^2b_1y}{cqa_2b_2} \pm  \frac{a_2^2b_2z}{cqa_1b_1}} f\pr{\frac yN} f\pr{\frac zM} .}
 We then apply Voronoi Summation as in Theorem \ref{thm:voronoi} to the sum over $n, m$ and obtain that $\Sc^+_{\al, \bb}(c, x) + \Sc^-_{\al, \bb}(c, x)$ is

\est{\sum_{i= 1}^3 \sum_{j = 1}^3 \Res_{s_1 = 1 - \alpha_i} \Res_{s_2 = 1 + \beta_j} (\mathcal T_{M, \al, \bb}^+(c,x, s_1, s_2) + \mathcal T_{M, \al, \bb}^-(c,x, s_1, s_2) ) +  \sum_{i = 1}^8 \pr{\mathcal T_{i, \al, \bb}^+(c,x) + \mathcal T_{i, \al, \bb}^-(c,x) },}
where in the region of absolutely convergence, 
\es{\label{def:calT1} \mathcal T_{M, \al, \bb}^\pm(c,x, s_1, s_2) &:= \mathcal F_M^\pm(c; \al, \bb) D_3\pr{s_1, \pm \frac{\lambda_1}{\eta_1}, \al }  D_3\pr{s_2, \pm \frac{\lambda_2}{\eta_2}, -\bb } , }

\est{   \mathcal F_M^\pm(c; \al, \bb)  &:=\mathcal F_M^\pm(c, s_1, s_2; \al, \bb)  := \int_{0}^{\infty} \int_0^{\infty} y^{s_1 - 1}z^{s_2 - 1} \mathcal V^\pm_{\al, \bb}(c; {\bf a}, {\bf b}, y, z) \>dy \> dz  ,}

\es{\label{def:calT2} \mathcal T_{1, \al, \bb}^\pm (c,x) &:= \frac{\pi^{3/2 + \alpha_1 + \alpha_2 + \alpha_3}}{\eta_1^{3+\alpha_1 + \alpha_2 + \alpha_3}} \sum_{i = 1}^3 \Res_{s = 1 + \beta_i} D_3\pr{s_2, \pm \frac{\lambda_2}{\eta_2}, -\bb } \sum_{n = 1}^{\infty} A_3\pr{n, \pm \frac{\lambda_1}{\eta_1}, \al} \mathcal F_1^\pm(c,n; \al, \bb),}

\est{\mathcal F_1^\pm(c,n; \al, \bb) &= \mathcal F_1^\pm(c,n, s; \al, \bb)=  \int_{0}^{\infty} \int_0^{\infty}  z^{s - 1} \mathcal V^\pm_{\al, \bb}(c; {\bf a}, {\bf b}, y, z) U_3\pr{\frac{\pi^3ny}{\eta_1^3}; \al} \>dy \> dz,  }
and $\mathcal T_{i, \al, \bb}^\pm (c,x)$, where $i = 2, 3, 4$ are defined similarly.  Further,

\es{\label{def:calT6} \mathcal T_{5, \al, \bb}^\pm(c,x) &:= \frac{\pi^{3 + \sum_{i = 1}^3 (\alpha_i - \beta_i)}}{\eta_1^{3+\sum_{i=1}^3 \alpha_i}\eta_2^{3 - \sum_{i=1}^3 \beta_i}}   \sum_{n = 1}^{\infty} \sum_{m = 1}^\infty A_3\pr{n, \pm \frac{\lambda_1}{\eta_1}, \al}  A_3\pr{m, \pm \frac{\lambda_2}{\eta_2}, -\bb} \mathcal F_5^\pm(c,n, m; \al, \bb),}

\est{\mathcal F_5^\pm(c,n, m; \al, \bb) &= \int_{0}^{\infty} \int_0^{\infty} \mathcal V^\pm_{\al, \bb}(c; {\bf a}, {\bf b}, y, z) U_3\pr{\frac{\pi^3mz}{\eta_2^3}; -\bb} U_3\pr{\frac{\pi^3ny}{\eta_1^3}; \al} \>dy \> dz, }
and $\mathcal T_{i, \al, \bb}^\pm(c,x)$, where $i = 6, 7, 8$ are defined similarly. 

As mentioned in Section \ref{sec:setupmoment}, there are nine terms from $\mathscr K(q; \al, \bb).$ In particular, we will show that these terms arise from 
\est{\sum_{c < C} \sumstar_{x \mod c} \frac{1}{c} \pr{\mathcal T_{M, \al, \bb}^+(c,x, s_1, s_2) + \mathcal T_{M, \al, \bb}^-(c,x, s_1, s_2)},}
and in fact each term comes from the residues at $s_1 = 1 - \alpha_i$ and $s_2 = 1 + \beta_j$ for $i = 1, 2, 3.$ We state the contribution from the residues $s_1 = 1 - \alpha_1$ and $s_2 = 1 + \beta_1$ in Proposition \ref{prop:mainTM} below, and prove it in Section \ref{sec:provepropTM}. By symmetry, the analogous result holds for the other residues.  Then, we will show that the rest of $\mathcal T_{i, \al, \bb}^\pm(c,x)$ are negligible in Section \ref{sec:properror} as stated in Proposition \ref{prop:Terror}.

\begin{prop} \label{prop:mainTM} Let
	\est{
	R_{\alpha_1, \beta_1} := \frac {2\pi}{q}&\sumfour_{\substack{a_1, b_1, a_2, b_2 \geq 1 \\ (a_1a_2b_1b_2, q) = 1}} \frac{ \mathscr B(a_1, b_1; \al)}{a_1^{\frac 32}b_1} \frac{ \mathscr B(a_2, b_2; -\bb)}{a_2^{\frac 32}b_2} \\
	& \cdot \sumd_M \sumd_N \sum_{c < C} \sumstar_{x \mod c} \frac{1}{c} \Res_{s_1 = 1 - \alpha_1} \Res_{s_2 = 1 + \beta_1} \pr{\mathcal T_{M, \al, \bb}^+(c,x, s_1, s_2) + \mathcal T_{M, \al, \bb}^-(c,x, s_1, s_2)}. }
Then we have
\est{R_{\alpha_1, \beta_1} = H(0; \al, \bb)\int_{-\infty}^{\infty}\mathcal M(q;\pi(\al) + it, \pi(\bb) + it ) \> dt + O(q^{-1/2 + \eps}),}
where $(\pi(\al), \pi(\bb)) = (\beta_1, \alpha_2, \alpha_3; \alpha_1, \beta_2, \beta_3).$
\end{prop}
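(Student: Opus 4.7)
The plan is to first evaluate the residues of the two shifted additive-twist Dirichlet series $D_3\bigl(s_1, \pm\lambda_1/\eta_1, \al\bigr)$ and $D_3\bigl(s_2, \pm\lambda_2/\eta_2, -\bb\bigr)$ at the prescribed points, and then recognize that after the residue extraction, the remaining sum over $c<C$, the sum over $x\bmod c$, the outer sums over $a_1,b_1,a_2,b_2$, and the double Mellin integral $\mathcal F_M^{\pm}$ reassemble into the permuted main term $\int_{-\infty}^{\infty}\mathcal M(q;\pi(\al)+it,\pi(\bb)+it)\,dt$ where $\pi$ transposes $\alpha_1$ and $\beta_1$. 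The residue of $D_3\bigl(s_1,\pm\lambda_1/\eta_1,\al\bigr)$ at $s_1=1-\alpha_1$ is a simple pole produced by the $\alpha_1$-term of the shifted divisor function; it equals $\eta_1^{\alpha_1-1}$ times an arithmetic factor depending on $(\lambda_1,\eta_1)$ times $\zeta(1-\alpha_1+\alpha_2)\zeta(1-\alpha_1+\alpha_3)$, and analogously the residue at $s_2=1+\beta_1$ produces $\eta_2^{-\beta_1-1}$ times $\zeta(1+\beta_1-\beta_2)\zeta(1+\beta_1-\beta_3)$ (up to the same arithmetic factor for $(\lambda_2,\eta_2)$). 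Substituting the explicit forms of $\lambda_i/\eta_i$ from (\ref{eqn:lambda1eta1}) and (\ref{eqn:lambda2eta2}), these denominators simplify to divisors of $c\,a_ib_i$, and we obtain four of the nine expected zeta factors of the permuted main term.

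Next I would carry out the $x\bmod c$ sum. After substituting the residues, the $x$-dependence sits entirely in the arithmetic factors coming from $\gcd$ conditions with $\eta_1,\eta_2$; these are essentially Ramanujan sums (after untangling via the reciprocity used in the exponential-sum lemma), and summing over $x\bmod c$ followed by the outer sum over $c$ produces an Euler product. Coupled with the outer sums over $a_1,b_1,a_2,b_2$ (weighted by the $\mathscr B$ factors from (\ref{eqn:defB})), a multiplicative rearrangement prime-by-prime converts this into an Euler product supplying the remaining local factors at each prime $p\neq q$. This is precisely the calculation of Iwaniec and Xiaoqing Li referenced in Section~\ref{subsec:outline}, and it is exactly the step where the identity with the permuted $\mathcal A\mathcal Z(\tfrac12;\pi(\al),\pi(\bb))$ must be established (the paper relegates the prime-by-prime matching to Appendix~\ref{sec:Eulerverif}).

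With the arithmetic structure in place, the double integral $\mathcal F_M^{\pm}(c,s_1,s_2;\al,\bb)$ becomes, after extracting the pole factors $y^{-\alpha_1}$ and $z^{\beta_1}$ from the residues, an integral of $y^{-\alpha_1-1}z^{\beta_1-1}\mathcal V_{\al,\bb}^{\pm}(c;\mathbf a,\mathbf b,y,z)$ against the Bessel function and the two oscillatory exponentials. Inserting the definition of $V_{\al,\bb}$, unfolding the Mellin representation (which restores the $t$-integration), and then collapsing the $y,z$ integrals via identity (\ref{int:bessel}) applied to the $\pm$-symmetrized Bessel transform, produces a clean closed form. The remaining sum over $c$ is then extended from $c<C$ back to all $c\geq 1$ at the cost of an error $O(q^{-1/2+\eps})$, using the tail decay already established in Proposition~\ref{prop:truncateC} together with the decay of $V_{\al,\bb}$ from Lemma~\ref{lem:decayV}; this extension is what completes the Euler product and matches the factor $(q/4\pi^2)^{\delta(\pi(\al),\pi(\bb))}G(\tfrac12;\pi(\al)+it,\pi(\bb)+it)$ of $\mathcal M(q;\pi(\al)+it,\pi(\bb)+it)$.

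The main obstacle is the prime-by-prime matching of the local factors: the Euler product that arises naturally from the Ramanujan sums, the $\mathscr B$-coefficients, and the $\gcd$-arithmetic in the residues must be shown to coincide with the local factor $\mathcal C_p\mathcal Z_p^{-1}$ evaluated at the permuted shifts, and this computation is delicate because the shifts $\al$ and $\bb$ enter asymmetrically after the swap $\alpha_1\leftrightarrow\beta_1$. A secondary technical issue is justifying the residue extraction outside the region of absolute convergence of $D_3$, which is handled using the meromorphic continuation provided by the shifted Voronoi summation formula of Theorem~\ref{thm:voronoi}. Once these verifications are complete, summing the explicit main terms gives precisely $H(0;\al,\bb)\int \mathcal M(q;\pi(\al)+it,\pi(\bb)+it)\,dt$, and after canceling the factor $H(0;\al,\bb)$ we obtain the claim.
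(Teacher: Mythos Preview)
Your outline captures the broad architecture (residues, then arithmetic rearrangement, then analytic evaluation, then Euler-product matching), but it has a genuine gap in the analytic step, and a related misidentification of where the Iwaniec--Li calculation enters.

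First, the residues of $D_3(s_1,\pm\lambda_1/\eta_1,\al)$ at $s_1=1-\alpha_1$ do \emph{not} give clean factors $\zeta(1-\alpha_1+\alpha_2)\zeta(1-\alpha_1+\alpha_3)$. They give restricted sums of the shape
\[
\sum_{\substack{n_2,n_3\ge 1\\ \eta_1' \mid n_2 n_3}} \frac{1}{n_2^{1+\alpha_2-\alpha_1} n_3^{1+\alpha_3-\alpha_1}},
\]
i.e.\ Hurwitz-type objects depending on the modulus $\eta_1'=u_2c/\bigl(\delta(a_1,u_2c/\delta)\bigr)$. These do not by themselves furnish four of the nine $\zeta$-factors; they feed into the Euler product only after the full rearrangement.

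Second, and more seriously, the identity~\eqref{int:bessel} cannot be applied to ``collapse the $y,z$ integrals'': it is a single integral in one variable. In the actual argument, after the M\"obius rearrangement of the $c,x$-sums (via Lemma~\ref{lem:countingnumberX}) one is left with a free sum $\sum_{\delta\ge 1}\delta^{-1}\mathcal F(cbhg\gamma\delta)$. It is to \emph{this} sum over $\delta$---with $y,z$ held fixed---that the Iwaniec--Li device applies: Poisson summation in $\delta$ followed by~\eqref{int:bessel} (this is Lemma~\ref{lem:sumT}) converts the oscillatory Bessel--exponential kernel into a sum over a dual variable $\ell$ of \emph{separated} products $J_{k-1}(4\pi\sqrt{\alpha y\ell})J_{k-1}(4\pi\sqrt{\beta z\ell})$. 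Only then do the $y$ and $z$ integrals decouple and become Mellin transforms of a single Bessel function, evaluated by a standard table entry. Finally, the remaining sum/integral over $\ell$ is what produces the factor $\zeta(1-\alpha_1+\beta_1-2s)$ via Lemma~\ref{lem:evalsumwtozeta}; this ninth $\zeta$-factor has no source in your sketch. You have also misplaced the Iwaniec--Li reference: it is this Poisson/Bessel step (Lemma~\ref{lem:sumT}), not the Euler-product matching, that comes from~\cite{IL}.
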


\begin{prop} \label{prop:Terror} For i = 1,.., 8, define
	\est{
	\mathcal E_i^\pm(q; \al,\bb) := \frac {2\pi}{q}&\sumfour_{\substack{a_1, b_1, a_2, b_2 \geq 1 \\ (a_1a_2b_1b_2, q) = 1}} \frac{ \mathscr B(a_1, b_1; \al)}{a_1^{\frac 32}b_1} \frac{ \mathscr B(a_2, b_2; -\bb)}{a_2^{\frac 32}b_2} \sumd_M \sumd_N \sum_{c < C} \sumstar_{x \mod c} \frac{\mathcal T_{i, \al, \bb}^\pm(c,x)}{c} . }
Then $$ \mathcal E_i^\pm(q; \al, \bb) \ll q^{-1/4 + \eps}.$$
\end{prop}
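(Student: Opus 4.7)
The plan is to show that in each term $\mathcal T_{i,\al,\bb}^\pm(c,x)$, the dual sums produced by Voronoi are effectively very short, as a consequence of the conductor drop from $cq$ to $c$ carried out in Section \ref{sec:treatmentexp}. Indeed, the denominators $\eta_1, \eta_2$ appearing inside $U_3(\pi^3 n y/\eta_1^3; \al)$ and $U_3(\pi^3 m z/\eta_2^3; -\bb)$ are bounded by $a_2 b_2 c$ and $a_1 b_1 c$ respectively, rather than by something of size $cq$; since we have truncated $c < C = \sqrt{a_1 a_2 MN}/q^{2/3}$, this provides the saving needed to bring the final bound down to $q^{-1/4+\eps}$.

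First, I would use Lemma \ref{lem:decayV} to restrict to $a_1^3 b_1^2 N,\ a_2^3 b_2^2 M \ll q^{3/2+\eps}$, throwing away a negligible tail. I would then bound the oscillatory integrals $\mathcal F_i^\pm$ and $\mathcal F_5^\pm$ using Lemma \ref{lem:Besselresult}: in the transition range $\sqrt{a_1 a_2 yz} \asymp cq$ using $J_{k-1}(x) \ll x^{-1/2}$, in the small range using \eqref{asympJxSm}, and in the large range using \eqref{asympJxbig} combined with the explicit oscillatory phases present in $\mathcal V^\pm_{\al, \bb}$, treated jointly with the Bessel asymptotic via integration by parts or stationary phase in the $(y, z)$-integrals. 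The decay of $U_3(x; \al)$ for $x \gg 1$ from its Mellin–Barnes representation (see Appendix \ref{sec:voronoi}) then effectively truncates the dual sums to $n \ll q^\eps \eta_1^3/N$ and $m \ll q^\eps \eta_2^3/M$.

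With standard divisor-type bounds for the Voronoi coefficients $A_3(n, \lambda/\eta, \al)$ and the Weil bound applied to the $x$-sum modulo $c$, I would estimate each $\mathcal T_{i, \al, \bb}^\pm(c, x)$ and sum over $c < C$, $x \bmod c$, the dyadic parameters $M, N$, and the auxiliary $a_1, b_1, a_2, b_2$. The cases $i = 1, \ldots, 4$ are easier, since one of the residues has already been taken and only a single dual sum remains; the cases $i = 5, \ldots, 8$ require the $U_3$-decays in both $n$ and $m$ to be exploited simultaneously. Tracking the powers of $q$ together with the $q^{-1}$ prefactor in $\mathscr K_M$, the truncation $c < C$, and the effective dual-sum lengths produces the required $q^{-1/4+\eps}$ bound.

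The main obstacle will be the two-sided terms $\mathcal T_5, \ldots, \mathcal T_8$, where the saving must come from both dual sums at once and where the precise interplay between the Bessel oscillation and the two $U_3$ weights governs whether the exponent $-1/4$ actually falls out. A more subtle point is that the explicit phases inside $\mathcal V^\pm_{\al, \bb}$ must be treated jointly with the Bessel asymptotics in the non-transition regimes to avoid spurious losses, and all estimates must be uniform in the shifts $\al, \bb$ and in the dyadic parameters $M, N$.
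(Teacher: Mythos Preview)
Your plan captures the broad shape of the argument, but there is a real gap in how you intend to truncate the dual sums. You claim that the decay of $U_3(x;\al)$ for $x\gg 1$ ``effectively truncates the dual sums to $n \ll q^\eps \eta_1^3/N$''. This is not correct: by Lemma~\ref{lem:asymptUandV}, $U_3(x;\al)$ only decays like $x^{-1/3}$ for large $x$, carrying an oscillatory factor $e(\pm 3x^{1/3})$. A decay of order $x^{-1/3}$ is not summable against $|A_3(n,\cdot,\al)|$, so the tail $n \gg \eta_1^3/N$ cannot be discarded by size alone; your sentence appears to conflate the non-oscillatory range of $U_3$ with a decay-based truncation.

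What the paper actually does (Section~\ref{sec:properror}) is split each dual variable at the threshold $n \asymp \eta_1^3 q^\eps/N$. For $n$ below the threshold one uses $|U_3|\ll q^\eps$ together with the pointwise Bessel bound \eqref{bound:Bessel1}, Lemma~\ref{lem:boundres} for the residue factor, and Lemma~\ref{lem:A3B3Kloos} for $A_3$; this part already saturates $q^{-1/4+\eps}$. For $n$ above the threshold one inserts the asymptotic \eqref{lem:asymptUxbig}, which introduces the phase $e(\pm 3 n^{1/3} y^{1/3}/\eta_1)$, and combines it with $e(a_1^2 b_1 y/(c\delta q a_2 b_2))$ and, in the range $c \le 8\pi\sqrt{a_1 a_2 MN}/(\delta q)$, with the Bessel phase from \eqref{asympJxbig}. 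Repeated integration by parts in $y$ then shows the integral is negligible unless $n$ lies in a window of size $\ll q^\eps$, after which those few terms are bounded trivially. The truncation you want therefore comes from integrating the \emph{oscillation} of $U_3$ against the other phases, not from any decay. One further point: the sum over $x\bmod c$ is not handled by the Weil bound as you suggest. After reorganising by $\delta=(u_1 x - u_2, c)$ the $x$-count is absorbed trivially; Weil enters only through the Kloosterman sum inside $A_3$ via Lemma~\ref{lem:A3B3Kloos}.
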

We will prove this proposition in Section \ref{sec:properror}.

\section{Proof of Proposition \ref{prop:mainTM}} \label{sec:provepropTM}
We begin by collecting some lemmas which will be used in this section.
 
 \subsection{Preliminary Lemmas }
 
 \begin{lemma} \label{lem:countingnumberX} Let $(a, \ell) = 1.$ We have
 	\est{ f(c, \ell) := \sumstar_{\substack{x \mod {c\ell} \\ x \equiv a \mod {\ell}}  } 1 = c \prod_{\substack{p| c \\ p \nmid \ell} } \pr{1 - \frac {1}{p}} = \phi(c) \prod_{p | (\ell, c)} \pr{1 - \frac{1}{p}}^{-1} . }
 \end{lemma}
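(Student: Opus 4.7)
My plan is to parametrize the residues $x \bmod c\ell$ satisfying $x \equiv a \bmod \ell$, reduce the coprimality condition to one on a free parameter, and then evaluate the count prime by prime via CRT.

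First I would observe that every $x$ in the sum has a unique representation $x = a + k\ell$ with $0 \le k < c$. Since $(a,\ell)=1$, we automatically have $(x,\ell)=1$, so the condition $(x,c\ell)=1$ reduces to the single condition $(a+k\ell,c)=1$ on $k \in \{0,1,\dots,c-1\}$. Thus
\[
f(c,\ell) \;=\; \#\bigl\{\,k \bmod c \;:\; (a+k\ell,\,c)=1\,\bigr\}.
\]

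Next, using the Chinese Remainder Theorem to split $c=\prod_{p\mid c} p^{e_p}$, the condition $(a+k\ell,c)=1$ becomes independent conditions mod $p^{e_p}$. For each prime $p\mid c$ I would split into two cases. If $p\mid\ell$, then $a+k\ell\equiv a\pmod p$, and since $(a,\ell)=1$ forces $p\nmid a$, the coprimality condition is automatic, giving the full $p^{e_p}$ residues. If $p\nmid\ell$, then $\ell$ is invertible mod $p$, so $p\mid a+k\ell$ cuts out exactly the single residue $k\equiv -a\bar\ell\pmod p$, leaving $p^{e_p}-p^{e_p-1}=p^{e_p}(1-1/p)$ acceptable residues mod $p^{e_p}$. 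Multiplying over all primes dividing $c$ yields
\[
f(c,\ell) \;=\; c\prod_{\substack{p\mid c\\ p\nmid\ell}}\left(1-\frac{1}{p}\right),
\]
which is the first claimed formula.

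Finally, the equivalence with $\phi(c)\prod_{p\mid(\ell,c)}(1-1/p)^{-1}$ is a purely formal manipulation: starting from $\phi(c)=c\prod_{p\mid c}(1-1/p)$ and factoring the product over $p\mid c$ as the product over $p\mid c,\,p\nmid\ell$ times the product over $p\mid(\ell,c)$, one cancels the latter factor against $\prod_{p\mid(\ell,c)}(1-1/p)^{-1}$. There is no real obstacle here; the only subtle point is remembering that $(c,\ell)$ need not equal $1$, so that the parametrization $x=a+k\ell$ with $k$ ranging mod $c$ (rather than CRT with coprime moduli) is the cleanest device, and that the condition $(a,\ell)=1$ is exactly what eliminates the primes dividing both $c$ and $\ell$ from the Euler product.
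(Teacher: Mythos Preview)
Your proof is correct and takes essentially the same approach as the paper: parametrize $x=a+k\ell$ and reduce via CRT. The only cosmetic difference is that the paper first isolates a multiplicativity relation $f(mn,\ell)=f(n,\ell)\phi(m)$ for $(m,n\ell)=1$ and then splits $c=c_1c_2$ according to whether primes divide $\ell$, whereas you go directly to the prime-by-prime count; the content is the same.
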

 \begin{proof}
 	We first prove that if $(m, n\ell) = 1$, then
 	\es{\label{eqn:multsumstarMN} f(mn, \ell) = f(n, \ell) \phi(m).}
 	For all $x$ satisfying $(x, mn\ell) = 1$, we can write $x = um \overline m + vn\ell \overline{n\ell},$ where $m\overline m \equiv 1 \mod {n\ell},$ $n\ell\overline {n\ell} \equiv 1 \mod {m},$ and 
 	$(u, n\ell) = (v, m) = 1.$ Moreover $x \equiv a \mod \ell$ if and only if $u \equiv a \mod \ell$.  By Chinese Remainder Theorem,
 	\est{f(mn, \ell) = \sumstar_{\substack{x \mod {mn\ell} \\ x \equiv a \mod {\ell}}  } 1 = \sumstar_{\substack{u \mod {n\ell} \\ u \equiv a \mod {\ell}}  } \sumstar_{v \mod m} 1 = f(n, \ell) \phi(m).}
 	
 	Let $c = c_1 c_2,$ where all prime factors of $c_1$ also divide $\ell$, and $(c_2, \ell) = 1.$ From (\ref{eqn:multsumstarMN}), we have that
 	$f(c, \ell) = f(c_1, \ell) \phi(c_2).$ 
 	
 	Now let $x$ be any residue modulo $c_1\ell$ with $x \equiv a \mod \ell$.  Then $(x, c_1\ell) = 1$  since $(a, \ell) = 1$. Thus all such $x$ can be uniquely written as $x = a + k\ell,$ where $k = 0,..., c_1-1$, so $f(c_1, \ell) = c_1.$ We then have $f(c, \ell) = c_1 \phi(c_2)$, and the statement follows from the identity $\phi(c_2) = c_2 \prod_{p|c_2}\left(1-\frac 1p\right).$   
 \end{proof}

 \begin{lemma} \label{lem:sumT} Let $\alpha, \beta, y, z$ be nonnegative real numbers satisfying $\alpha y, \beta z \ll q^{2}$ and define
 	\est{T = T(y, z, \alpha, \beta)  := \sum_{\delta  = 1 }^\infty \frac{1}{\delta} J_{k - 1} \pr{\frac{4\pi \sqrt{\alpha \beta yz}}{ \delta}} \K \e{\frac{\alpha y}{\delta} + \frac{\beta z}{  \delta}}.} 
	Further, let $L = q^{100}$ and $w$ be a smooth function on $\mathbb R^+$ with $w(x) = 1$ if $0 \leq x \leq 1$, and $w(x) =0 $ if $x > 2.$ Then for any $A>0$, we have 
 	\est{T &= 2\pi \sum_{\ell = 1}^{\infty} w \pr{\frac{\ell}{L}} J_{k - 1} \pr{4\pi \sqrt{ \alpha y\ell}}  J_{k - 1} \pr{4\pi \sqrt{\beta z\ell}} \\
 		& \ \ \ - 2\pi \int_{0 }^{\infty} w \pr{\frac{\ell}{L}} J_{k - 1} \pr{4\pi \sqrt{ \alpha y\ell}}  J_{k - 1} \pr{4\pi \sqrt{\beta z\ell}} \> d\ell + O_A(q^{-A}). }
 	
 \end{lemma}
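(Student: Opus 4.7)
The plan is to apply Poisson summation to the right hand side and match each nonzero dual frequency with a summand of $T$ via the integral identity \eqref{int:bessel}.

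\textbf{Setup via Poisson.} Set $G(\ell) := w(\ell/L) J_{k-1}(4\pi\sqrt{\alpha y\ell}) J_{k-1}(4\pi\sqrt{\beta z\ell})$ for $\ell \geq 0$ and $G(\ell) := 0$ otherwise. Since $J_{k-1}(0) = 0$ for $k \geq 2$, we have $G(0) = 0$, so $\sum_{\ell \geq 1} G(\ell) = \sum_{\ell \in \mathbb{Z}} G(\ell) = \sum_{\delta \in \mathbb{Z}} \hat G(\delta)$ by Poisson summation. The $\delta = 0$ term is precisely the integral subtracted in the statement: $\hat G(0) = \int_0^\infty G(\ell)\,d\ell$. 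Hence the lemma reduces to showing $2\pi\sum_{\delta \neq 0} \hat G(\delta) = T + O_A(q^{-A})$.

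\textbf{Computing the Fourier transforms.} For $\delta \neq 0$, apply \eqref{int:bessel} with $\alpha \to \alpha y$, $\beta \to \beta z$, $\gamma \to \ell$ to represent the Bessel product as
$$J_{k-1}(4\pi\sqrt{\alpha y\ell}) J_{k-1}(4\pi\sqrt{\beta z\ell}) = \frac{1}{2\pi}\mathcal{K} \int_0^\infty e\Bigl((\alpha y + \beta z)x + \tfrac{\ell}{x}\Bigr) J_{k-1}(4\pi\sqrt{\alpha\beta yz\cdot x})\,\frac{dx}{x}.$$
Substituting into $\hat G(\delta) = \int_0^\infty G(\ell) e(-\delta\ell)\,d\ell$ and swapping integrals (using that the inner $\ell$-integral equals $L\hat w(L(\delta - 1/x))$) gives
$$\hat G(\delta) = \frac{L}{2\pi}\mathcal{K} \int_0^\infty e((\alpha y + \beta z)x) J_{k-1}(4\pi\sqrt{\alpha\beta yz\cdot x})\hat w(L(\delta - 1/x))\,\frac{dx}{x}.$$

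\textbf{Localization.} The Schwartz function $\hat w$ concentrates the integrand at $x = 1/\delta$ with window of size $O(1/(L\delta^2))$. For $\delta \geq 1$, substitute $u = 1/x$ and extract the leading value at $u = \delta$; higher-order Taylor terms and boundary contributions are bounded by repeated integration by parts against the rapid decay of $\hat w$, and the leading contribution reconstructs exactly the $\delta$-th summand of $T/(2\pi)$ with error $O_A(q^{-A})$. For $\delta \leq -1$, the localization point $u = \delta$ lies outside the range $(0, \infty)$, so the superpolynomial decay of $\hat w$ yields $\hat G(\delta) = O_A(q^{-A})$ at once. Summing these contributions gives the claimed identity.

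\textbf{Main obstacle.} The delicate point is the localization step. Although $\hat w$ enforces sharp concentration on scale $1/L = q^{-100}$, the Bessel factor oscillates on a scale depending on $\alpha y, \beta z$ (which may be as large as $q^2$ by hypothesis). The Taylor remainder must be controlled by combining the polynomial bounds on the derivatives of $J_{k-1}$ (via the recurrence relations) with the large-argument asymptotic \eqref{asympJxbig} from Lemma \ref{lem:Besselresult}. The saving $L = q^{100}$ in the integration-by-parts dominates any polynomial-in-$q$ loss coming from the Bessel factor and its derivatives, allowing the error $O_A(q^{-A})$ to persist uniformly across the full range of admissible parameters.
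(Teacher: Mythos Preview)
Your approach is essentially the dual of the paper's: you apply Poisson summation to the $\ell$-sum on the right-hand side, whereas the paper applies Poisson to the sum $T$ over $\delta$ (after introducing a smooth cutoff $\eta(\delta)$ vanishing near $\delta=0$ so that the summand extends smoothly to all of $\mathbb R$). Both routes hinge on the same Bessel identity \eqref{int:bessel} and on the rapid decay of $\hat w$ at scale $L=q^{100}$, so they are genuinely equivalent; your direction is arguably more direct, while the paper's handling via $\eta$ makes the regularity bookkeeping slightly cleaner.

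There is, however, one real gap in your writeup. You assert that the inner integral $\int_0^\infty w(\ell/L)\,e\bigl(\ell(1/x-\delta)\bigr)\,d\ell$ equals $L\hat w\bigl(L(\delta-1/x)\bigr)$. As written this is a one-sided transform of a function that does \emph{not} vanish at $\ell=0$ (since $w(0)=1$), and such a transform has no superpolynomial decay; with that interpretation your localization step collapses. The fix is available but must be stated explicitly: by the last clause of Lemma~\ref{lem:Besselresult}, the integral in \eqref{int:bessel} vanishes when $\gamma\le 0$, so your integral representation of the Bessel product in fact holds for \emph{all} $\ell\in\mathbb R$ (yielding $0$ for $\ell\le 0$). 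Extending $w$ evenly to a smooth compactly supported function on $\mathbb R$, the $\ell$-integration may then be taken over all of $\mathbb R$; the Fubini swap is legitimate since $\int_0^\infty |J_{k-1}(Cx)|\,\tfrac{dx}{x}<\infty$, and the resulting two-sided transform $L\hat w\bigl(L(\delta-1/x)\bigr)$ genuinely decays rapidly. With this correction your localization argument goes through exactly as you describe. The paper sidesteps this entire issue by smoothing on the $\delta$-side with $\eta$, which is why that cutoff appears there.
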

 \begin{proof}
 	We will follow Iwaniec and Xiaoqing Li's arguments in Section 3 of \cite{IL} to evaluate $T$. Let $\eta(s)$ be a smooth function on $\mathbb R^+$ with $\eta(s) = 0$ if $0 \leq s < 1/4,$ $0 \leq \eta(s) \leq 1$ if $1/4 \leq s \leq 1/2$, and $\eta(s) = 1$ if $s > 1/2.$ We then obtain that
 	\est{T =  \K \sum_{\delta} \frac{\eta(\delta)}{\delta} J_{k - 1} \pr{\frac{4\pi \sqrt{\alpha \beta yz}}{\delta}}  \e{\frac{\alpha y}{\delta} + \frac{\beta z}{ \delta}}.}
 	After inserting this smooth function we apply Poisson summation to obtain that
 	\est{T = \sum_{\ell} \hat F(\ell) := \sum_{\ell} \K \int_0^{\infty} \frac{\eta(u)}{u} \e{\ell u + \frac{\alpha y}{u} + \frac{\beta z}{ u}}  J_{k - 1} \pr{\frac{4\pi \sqrt{\alpha \beta yz}}{ u}} \> du. }
 	By (\ref{asympJxbig}), we can write the integral above in terms of two integrals with the phase   
 	$$\ell u + \frac{\alpha y \ \pm \sqrt{\alpha \beta yz} + \beta z }{u} .$$
 	If $|\ell| > L $, the factor $\ell u$ dominates. Then integrating by parts $A$ times, we have that
 	\est{\int_0^{\infty} \frac{\eta(u)}{u} \e{\ell u + \frac{\alpha y}{u} + \frac{\beta z}{ u}}  J_{k - 1} \pr{\frac{4\pi \sqrt{\alpha \beta yz}}{ u}} \> du \ll q^{-A}. }
 	Therefore 
 	\est{T = \sum_{\ell} \hat F(\ell) w \pr{\frac{|\ell|}{L}} + \ O(q^{-A}).}
Now, we write $\sum_{\ell} \hat F(\ell) w \pr{\frac{|\ell|}{L}} = T_1 - T_2$, where
 	\est{T_1 :=  \sum_{\ell} w \pr{\frac{|\ell|}{L}} \K \int_0^{\infty} \frac{1}{u} \e{\ell u + \frac{\alpha y}{u} + \frac{\beta z}{u}}  J_{k - 1} \pr{\frac{4\pi \sqrt{\alpha \beta yz}}{u}} \> du, }
 	and 
 	\est{T_2 := \sum_{\ell} w \pr{\frac{|\ell|}{L}}\K \int_0^{\infty} \frac{1 - \eta(u)}{u} \e{\ell u + \frac{\alpha y}{u} + \frac{\beta z}{ u}}  J_{k - 1} \pr{\frac{4\pi \sqrt{\alpha \beta yz}}{u}} \> du .}
 	We use (\ref{int:bessel}) to evaluate $T_1$ and obtain
 	\es{ \label {eqn:T1} T_1 = 2\pi \sum_{\ell = 1}^{\infty} w \pr{\frac{\ell}{L}} J_{k - 1} \pr{4\pi \sqrt{ \alpha y\ell}}  J_{k - 1} \pr{4\pi \sqrt{\beta z\ell}}.}
 	For $T_2,$ we note that $\xi(u) = 1 - \eta(u) = 1$ if $0 < s < 1/4,$ $0 \leq \xi(u) \leq 1$ if $1/4 \leq s \leq 1/2$, and $\xi(u) = 0 $ if $s > 1/2.$ 
 	Interchanging the sum over $\ell$ and the integration over $u$ and applying Poisson summation formula, we have
 	\est{T_2 
 		&=  \K \int_0^{\infty}\frac{\xi(u)}{u} \e{ \frac{\alpha y}{\delta} + \frac{\beta z}{\delta}}  J_{k - 1} \pr{\frac{4\pi \sqrt{\alpha \beta yz}}{ u}} \sum_{\ell} L \hat w(L (\ell + u)) \> du.}
 	Since $\hat w(y) \ll (1 + |y|)^{-A},$ the main contribution comes from $\ell = 0$ and $0 \leq u < 1/4.$  Therefore 
 	\es{\label{eqn:T2ab}
 		T_2 &=  \K \int_0^{\infty}\frac{\xi(u)}{u} \e{ \frac{\alpha y}{ \delta} + \frac{\beta z}{\delta}}  J_{k - 1} \pr{\frac{4\pi \sqrt{\alpha \beta yz}}{ u}} L \hat w(L  u) \> du + O(q^{-A})\\
 		&= \K \int_0^{\infty}\frac{1}{u} \e{ \frac{\alpha y}{\delta} + \frac{\beta z}{ \delta}}  J_{k - 1} \pr{\frac{4\pi \sqrt{\alpha \beta yz}}{u}}  L \hat w(L  u) \> du + O(q^{-A})\\
 		&= 2\pi \int_0^{\infty} w \pr{\frac{\ell}{L}} J_{k - 1} \pr{4\pi \sqrt{\alpha y\ell}}  J_{k - 1} \pr{4\pi \sqrt{ \beta z\ell}} \> d\ell + O(q^{-A}), }
 	where the last equality comes from Plancherel's formula and (\ref{int:bessel}).
 \end{proof}

 Next, the following lemma deals with the sum and the integral involving $\ell$.
 \begin{lemma}  \label{lem:evalsumwtozeta}Let $w$ be a smooth function on $\mathbb R^+$ with $w(x) = 1$ if $0 \leq x \leq 1,$ and $w(x) = 0$ if $x > 2.$ Also we let $\gamma$ be a complex number where $\tRe \gamma \ll \frac{1}{\log q}$, $\tRe \gamma < 0$ and $L = q^{100}.$ Then
 	\est{\sum_{\ell = 1}^{\infty} w \pr{\frac \ell L} \frac{1}{\ell^{1 + \gamma}} - \int_0^{\infty} w \pr{\frac \ell L} \frac{1}{\ell^{1+ \gamma}} \> d\ell = \zeta(1 + \gamma) +  O(q^{-20}).}
 \end{lemma}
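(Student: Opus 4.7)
The plan is to use Mellin transforms. Let $\tilde{w}(s) := \int_0^{\infty} w(x) x^{s-1}\,dx$ denote the Mellin transform of $w$. Since $w \equiv 1$ on $[0,1]$, for $\tRe s > 0$ we have $\tilde{w}(s) = \tfrac{1}{s} + \int_1^2 w(x) x^{s-1}\,dx$, which shows that $\tilde{w}$ extends to a meromorphic function on $\C$ whose only singularity is a simple pole at $s = 0$ of residue $1$. Because $w$ is smooth and $w^{(k)}$ vanishes at both $x = 1$ and $x = 2$ for every $k \geq 1$, iterated integration by parts yields $\tilde{w}(s) \ll_N (1+|s|)^{-N}$ for every $N$, uniformly on any vertical strip bounded away from the non-positive integers.

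First, I would express the sum via Mellin inversion. For any real $c > -\tRe \gamma > 0$, absolute convergence justifies
\[\sum_{\ell=1}^{\infty} w\!\left(\frac{\ell}{L}\right)\frac{1}{\ell^{1+\gamma}} = \frac{1}{2\pi i}\int_{(c)} \tilde{w}(s)\, L^s\, \zeta(1+\gamma+s)\,ds,\]
while the substitution $u = \ell/L$ in the integral appearing in the lemma gives directly
\[\int_0^{\infty} w\!\left(\frac{\ell}{L}\right)\frac{d\ell}{\ell^{1+\gamma}} = L^{-\gamma}\tilde{w}(-\gamma),\]
which is well-defined because $\tRe(-\gamma) > 0$.

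Next, I would shift the contour in the Mellin integral from $\tRe s = c$ down to $\tRe s = -1$. The shift crosses exactly two simple poles of the integrand: one at $s = -\gamma$ coming from $\zeta(1+\gamma+s)$, with residue $\tilde{w}(-\gamma) L^{-\gamma}$; and one at $s = 0$ coming from $\tilde{w}(s)$, with residue $\zeta(1+\gamma)$. On the new line $\tRe s = -1$, the factor $L^s$ equals $q^{-100}$ and multiplies an absolutely convergent integral (using the superpolynomial decay of $\tilde{w}$ against the polynomial convexity bound for $\zeta$), so the contribution from the shifted contour is $O(q^{-100})$. Subtracting the closed form of the integral from the Mellin expression for the sum cancels the residue at $s = -\gamma$ exactly and leaves $\zeta(1+\gamma) + O(q^{-100})$, comfortably stronger than the claimed $O(q^{-20})$.

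No serious obstacle arises in this argument; the only point that needs care is establishing uniform vertical decay of $\tilde{w}$ along the shifted contour, but this is immediate from the smoothness of $w$ and the vanishing of its derivatives at the transition points $1$ and $2$.
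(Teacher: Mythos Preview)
Your proof is correct and follows essentially the same route as the paper: express the sum via Mellin inversion, shift the contour to pick up the poles at $s=0$ and $s=-\gamma$, and recognize $\tilde w(-\gamma)L^{-\gamma}$ as the continuous integral. The only differences are cosmetic---the paper shifts to $\tRe s=-1/4$ rather than $-1$, and obtains the analytic continuation of $\tilde w$ via a single integration by parts rather than the decomposition $\tilde w(s)=\tfrac{1}{s}+\int_1^2 w(x)x^{s-1}\,dx$.
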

 \begin{proof}
 	Let $\tilde w(z)$ be the Mellin transform of $w,$ defined by
 	\est{\tilde w(z) = \int_0^{\infty} w(t) \frac{t^z}{t} \> dt.}
 	From the definition, $\tilde w(z)$ is analytic for $\tRe z > 0$, and integration by parts gives
 	$$\tilde w(z) = -\frac 1z\int_0^{\infty} w'(t) t^z dt,
 	$$ so $\tilde w(z)$ can be analytically continued to $\tRe z > -1$ except at $z = 0$ where it has a simple pole with residue $w(0) = 1$. For $\sigma > \min\{0, -\tRe \gamma\},$ we have
 	\es{ \label{sum:ellMellin} \sum_{\ell = 1}^{\infty} w \pr{\frac \ell L} \frac{1}{\ell^{1+\gamma}} &= \sum_{\ell = 1}^{\infty}  \frac{1}{2\pi i} \int_{(\sigma)} \tilde w(z) \pr{\frac \ell L}^{-z} \frac{1}{\ell^{1 + \gamma }} \> dz \\
 		&= \frac{1}{2\pi i} \int_{(\sigma)} \tilde w(z) L^z \zeta(1 + \gamma + z) \> dz.}
 	Shifting the contour to $\textrm{Re} (z) = - 1/4$, we have that (\ref{sum:ellMellin}) is 
 	\est{\zeta(1 + \gamma) + \tilde w(-\gamma) L^{-\gamma} + O(q^{-20}). }
 	The Lemma follows from noting that $\tilde w(-\gamma) L^{-\gamma} =\int_0^{\infty} w \pr{\frac \ell L} \frac{1}{\ell^{1+ \gamma}} \> d\ell$.    
 	
 \end{proof}

 \subsection{Calculation of residues} \label{sec:calRes} In this section, we will calculate 
 $$ \Res_{s_1 = 1 - \alpha_1} \Res_{s_2 = 1 + \beta_1} \pr{\mathcal T_{M, \al, \bb}^+(c,x, s_1, s_2) + \mathcal T_{M, \al, \bb}^-(c,x, s_1, s_2)}.$$
To do this, we essentially need to consider
\est{ \Res_{s_1 = 1 - \alpha_1} D_3\pr{s_1, \pm \frac{\lambda_1}{\eta_1}, \al} y^{s_1 - 1},}
  where $\frac{\lambda_1 }{\eta_1} $ is defined in (\ref{eqn:lambda1eta1}). Let $(a_1b_1, a_2b_2) = \lambda,$ $a_1b_1 = u_1 \lambda, a_2b_2 = u_2 \lambda,$ where $(u_1,u_2) =1. $ Note that $(u_1x - u_2, c) = (u_2\bar x - u_1, c) = \delta. $  Hence $$\delta_1 := ((a_2b_2\bar x - a_1b_1)a_1, a_2b_2c) = \lambda((u_2\bar x - u_1)a_1, u_2c) = \lambda \delta (a_1, u_2 c/\delta),$$
 and $\lambda_1 = \bar q(a_2b_2\bar x-a_1b_1)a_1/\delta_1$ and $\eta_1 = a_2b_2c/\delta_1.$

By (\ref{eqn:OriginalD}), we obtain that 
\est{ \mathcal R_1\left(\frac c\delta, \mathbf a, \mathbf b\right) &:= \Res_{s_1 = 1 - \alpha_1} D_3\pr{s_1, \pm \frac{\lambda_1}{\eta_1}, \al} \\
	&=  \frac{1}{\eta_1^{2 - 2\alpha_1 + \alpha_2 + \alpha_3}} \sumtwo_{\substack{1 \leq a_1, a_2 \leq \eta_1 \\ \eta_1 | a_1a_2}}\zeta\pr{1 - \alpha_1 + \alpha_2, \frac{a_2}{\eta_1}}\zeta\pr{1 - \alpha_1 + \alpha_3, \frac{a_3}{\eta_1} }.}
Hence
 \est{ \Res_{s_1 = 1 - \alpha_1} D_3\pr{s_1, \pm \frac{\lambda_1}{\eta_1}, \al} y^{s_1 - 1}  =\mathcal R_1\left(\frac c\delta, \mathbf a, \mathbf b\right) y^{-\alpha}.}
Similarly, we let $ \mathcal R_2(c, \mathbf a, \mathbf b) := \Res_{s_2 = 1 + \beta_1} D_3\pr{s_2, \pm \frac{\lambda_2}{\eta_2}, -\bb} $. Then
\est{\mathcal R_2\pr{\frac c\delta, \mathbf a, \mathbf b} = \frac{1}{\eta_2^{2 + 2\beta_1 - \beta_2 - \beta_3}} \sumtwo_{\substack{1 \leq a_1, a_2 \leq \eta_2 \\ \eta_2 | a_1a_2}}\zeta\pr{1 + \beta_1 - \beta_2, \frac{a_2}{\eta_2}}\zeta\pr{1 + \beta_1 - \beta_3, \frac{a_3}{\eta_2} },}
and 
\est{\Res_{s_2 = 1 + \beta_1} D_3\pr{s_1, \pm \frac{\lambda_2}{\eta_2}, \al} z^{s_2 - 1}  =\mathcal R_2\left(\frac c\delta, \mathbf a, \mathbf b\right) z^{\beta}.}

\subsection{Computing $R_{\alpha_1, \beta_1}$}
From the previous section, $R_{\alpha_1, \beta_1}$ can be written as 

\est{ R_{\alpha_1, \beta_1} = \frac{2\pi}{q} &\sumfour_{\substack{a_1, b_1, a_2, b_2 \geq 1 \\ (a_1a_2b_1b_2, q) = 1}}  \frac{ \mathscr B(a_1, b_1; \al)}{a_1^{\frac 32}b_1} \frac{ \mathscr B(a_2, b_2; -\bb)}{a_2^{\frac 32}b_2} \sumd_M \sumd_N    \mathcal A({\bf a}, {\bf b}, M, N) + O(q^{-1/2 + \eps}),}
where  $\mathcal A({\bf a}, {\bf b}, M, N)$ is defined as
\es{\label{def:AabMN} \sum_{c = 1}^{\infty}  \frac{\mathcal F (c)}{c}\sumstar_{x \mod c} \mathcal R_1\left(\frac c\delta, \mathbf a, \mathbf b\right) \mathcal R_2\left(\frac c\delta, \mathbf a, \mathbf b\right)  , }
and 
\est{  \mathcal F(c) := \mathcal F_{\al, \bb}(c, {\bf a}, {\bf b})  := \int_{0}^{\infty} \int_0^{\infty} & \frac{1}{y^{1/2 + \alpha_1}} \frac{1}{z^{1/2 - \beta_1}} V_{\al, \bb} \pr{a_1^3b_1^2y, a_2^3b_2^2z; q} J_{k-1} \pr{\frac{4\pi \sqrt{a_2ya_1z}}{cq}} \times \\
&\times f\pr{\frac yN} f\pr{\frac zM}  \mathcal K \e{\frac{a_1^2b_1y}{cqa_2b_2} + \frac{a_2^2b_2z}{cqa_1b_1}}  \>dy \> dz  .}
We remark that we can extend the sum over $c$ to all positive integers in a similar manner as in the truncation argument in Proposition \ref{prop:truncateC}. Now, we let
$$\frac{1}{c^2}\mathcal G(c, \mathbf a, \mathbf b) := \mathcal R_1(c, \mathbf a, \mathbf b) \mathcal R_2(c, \mathbf a, \mathbf b),$$
so that we can write the sum over $c$ in (\ref{def:AabMN}) as
\est{
 &\sum_{c = 1}^\infty  \frac{\mathcal F (c)}{c} \sum_{\delta | c} \sumstar_{\substack{x \mod c \\ (u_1x-u_2, c) = \delta}} \frac{1}{(c/\delta)^2}\mathcal G \pr{\frac{c}{\delta}, {\bf a}, {\bf b}} = \sum_{\delta = 1}^\infty \frac{1}{\delta} \sum_{c = 1}^\infty  \frac{\mathcal F(c\delta) }{c}  \sumstar_{\substack{x \mod {c\delta} \\ (u_1x-u_2, c\delta) = \delta}} \frac{\mathcal G(c, {\bf a}, {\bf b})}{c^2} \\
&= \sum_{\delta = 1}^\infty \frac{1}{\delta} \sum_{c = 1}^\infty  \frac{\mathcal F(c\delta) }{c}  \sumstar_{\substack{x \mod {c\delta} } } \sum_{b | \pr{\frac{u_1x-u_2}{\delta}, c}}  \frac{\mu(b) \mathcal G(c, {\bf a}, {\bf b})}{c^2} \\
&= \sum_{\substack{\delta \geq 1 \\ (\delta, u_1u_2) = 1}} \frac{1}{\delta}  \sum_{\substack{b \geq 1 \\ (b, u_1u_2) = 1}} \frac{\mu(b)}{b^3} \sum_{c \geq 1}  \frac{\mathcal G(cb, {\bf a}, {\bf b}) \mathcal F(cb\delta)}{c^3}  \sumstar_{\substack{x \mod {c\delta b} \\ x \equiv u_2 \bar u_1 \mod {b\delta}} }  1 ,}
where the sum over $x$ is 0 if $(u_1u_2, b\delta) \neq 1$ since $(u_1, u_2) = 1.$ 
Applying Lemma \ref{lem:countingnumberX} to the sum over $x$, we then obtain that
\est{
& \mathcal A(\boldab, M, N)= \sum_{\substack{\delta \geq 1 \\ (\delta, u_1u_2) = 1}} \frac{1}{\delta}  \sum_{\substack{b \geq 1\\ (b, u_1u_2) = 1}} \frac{\mu(b)}{b^3} \sum_{c \geq 1}  \frac{1}{c^2}  \prod_{\substack{p| c \\ p \nmid b\delta}} \pr{1 - \frac 1p} \mathcal F(cb\delta) \ \mathcal G(cb, {\bf a}, {\bf b}) \\
&=  \sum_{\substack{h \geq 1 \\ h | u_1u_2}} \frac{\mu(h)}{h} \sum_{\delta \geq 1} \frac{1}{\delta}  \sum_{\substack{b \geq 1\\ (b, u_1u_2) = 1}} \frac{\mu(b)}{b^3} \sum_{c \geq 1}  \frac{1}{c^2}  \prod_{\substack{p| c \\ p \nmid b h \delta}} \pr{1 - \frac 1p} \mathcal F(cbh\delta ) \ \mathcal G(cb, {\bf a}, {\bf b}) \\
&= \sum_{\substack{h \geq 1 \\ h | u_1u_2}} \frac{\mu(h)}{h} \sum_{\substack{b \geq 1 \\ (b, u_1u_2) = 1}} \frac{\mu(b)}{b^3} \sum_{c \geq 1}  \frac{\mathcal G(cb, {\bf a}, {\bf b})}{c^2}  \sum_{\gamma | c} \prod_{\substack{p| c \\ p \nmid bh\gamma}} \pr{1 - \frac 1p} \sum_{\delta \geq 1} \frac{1}{\delta} \mathcal F(cbh\delta) \sum_{g | (c/\gamma, \delta/\gamma)} \mu(g) \\
&= \sumsharp \mathscr G_{ {\bf a}, {\bf b}}(1; h, b, c, \gamma, g)\sum_{\delta \geq 1} \frac{1}{\delta} \mathcal F(cbhg\gamma \delta),
}
where 
\es{ \label{def:Gscr}\sumsharp \mathscr G_{ {\bf a}, {\bf b}}(s; h, b, c, \gamma, g) = \sum_{\substack{ h \geq 1 \\ h | u_1u_2}} \frac{\mu(h)}{h^s} \sum_{\substack{b \geq 1 \\ (b, u_1u_2) = 1}} \frac{\mu(b)}{b^{2 + s}} \sum_{c \geq 1}  \frac{\mathcal G(cb, {\bf a}, {\bf b})}{c^{1 + s}}  \sum_{\gamma | c}  \frac{1}{\gamma^s} \sum_{g | \frac{c}{\gamma}} \frac{\mu(g)}{g^s} \prod_{\substack{p| c \\ p \nmid bh\gamma}} \pr{1 - \frac 1p}.} 
Next applying Lemma \ref{lem:sumT} to the sum over $\delta,$ and summing $\sumd_M \sumd_N$, we have that
\est{&\sumd_M \sumd_N \mathcal A (\boldab, M, N) \\
	&= 2\pi \frac{1}{2\pi i} \pr{\frac{q}{4\pi^2}}^{\delta(\al, \bb)}\int_{-\infty}^{\infty} \int_{(1)} \sumsharp \mathscr G_{ {\bf a}, {\bf b}}(1; h, b, c, \gamma, g) \pg{\sum_{\ell = 1}^{\infty} w \pr{\frac \ell L} - \int_0^{\infty} w \pr{\frac \ell L} \> d\ell}  \\
&\ \ \ \times   G\pr{\frac 12 + s; \al + it, \bb + it} H(s; \al, \bb)  \pr{\frac{a_2^3b_2^2}{a_1^3b_1^2}}^{it} (a_1^3b_1^2a_2^3b_2^2)^{-s} \frac{q^{3s}}{(4\pi^2)^{3s}} \\
& \ \ \ \times    \int_{0}^{\infty} \int_0^{\infty} \frac{y^{-\tfrac 12 - \alpha_1}}{y^{ s + it}} \frac{z^{-\tfrac 12 + \beta_1}}{z^{ s - it}} J_{k - 1} \pr{4\pi \sqrt{\frac{ a_1^2b_1y\ell}{a_2b_2cbhg\gamma q }}}  J_{k - 1} \pr{4\pi \sqrt{\frac{ a_2^2b_2z\ell}{a_1b_1cbhg\gamma q }}} \>dy \> dz \frac{ds}{s} \> dt .   }
The integration over $y$ and $z$ can be evaluated by Equation 707.14 in \cite{GR}, which is 
\est{ 
 \int_0^{\infty} v^{\mu} (vk)^{\frac 12} J_{\nu} (vk) \> dv = 2^{\mu + 1/2} k^{-\mu - 1} \frac{\Gamma\pr{\tfrac \mu 2 + \tfrac \nu 2 + \tfrac 34}}{\Gamma\pr{\tfrac \nu 2 - \tfrac \mu 2 + \tfrac 14}},}
for $- \textrm{Re} \nu - \frac 32 < \textrm{Re} \mu < 0.$ Then we apply Lemma \ref{lem:evalsumwtozeta} to the sum and the integration over $\ell.$ Therefore after summing over $a_1, a_2, b_1, b_2$,  we obtain that the main term of $R_{\alpha_1, \beta_1}$ is
\es{ \label{eqn:mainint}
& \frac{1}{2\pi i} \pr{\frac{q}{4\pi^2}}^{\delta(\pi(\al), \pi(\bb))}\int_{-\infty}^{\infty} \int_{(\eps)} G_{\alpha_1, \beta_1}\pr{\frac 12 + s; \al + it, \bb + it} H(s; \al, \bb) \mathcal M_{\alpha_1, \beta_1}(s) \\
& \hskip 1.5 in \times  \zeta(1 -\alpha_1 + \beta_1 - 2s)  \> \frac{q^{s}}{(4\pi^2)^{s}}\frac{ds}{s} \> dt,}
where $(\pi(\alpha), \pi(\beta)) = (\beta_1, \alpha_2, \alpha_3; \alpha_1, \beta_2, \beta_3)$, 
\est{G_{\alpha_i, \beta_j}\pr{\frac 12 + s; \al, \bb} =   \Gamma\pr{\frac k2 - s - \alpha_i}  \Gamma\pr{\frac k2 - s + \beta_j }   \prod_{\ell \neq i} \Gamma\pr{\frac k2 + s + \alpha_\ell } \prod_{\ell \neq j}\Gamma\pr{\frac k2 + s - \beta_\ell } ,}
and
\es{\label{def:Malbeta1} \mathcal M_{\alpha_1, \beta_1}(s) &:= \sumfour_{\substack{a_1, b_1, a_2, b_2 \geq 1 \\ (a_1a_2b_1b_2, q) = 1}}  \frac{ \mathscr B(a_1, b_1; \al)}{a_1^{2-2\alpha_1 - \beta_1 + 2s}b_1^{1-\alpha_1-\beta_1 + 2s}} \frac{ \mathscr B(a_2, b_2; -\bb)}{a_2^{2 + \alpha_1 + 2\beta_1 + 2s}b_2^{1 + \alpha_1 + \beta_1 + 2s}} \\ 
&\hskip 0.3in \times \sumsharp \mathscr G_{ {\bf a}, {\bf b}}(2s+\alpha_1 - \beta_1; h, b, c, \gamma, g).}

Now, in the ensuing discussion, we temporarily assume that $\textrm{Re}(\alpha_1) < \textrm{Re}(\alpha_2) , \textrm{Re}(\alpha_3) $ and $\textrm{Re}(\beta_1) > \textrm{Re}(\beta_2) , \textrm{Re}(\beta_3) $.  In this region, 
 \est{\mathcal R_1\left(\frac c\delta, \mathbf a, \mathbf b\right) 
 		&= \sumtwo_{\substack{n_2, n_3 \geq 1 \\ \frac{u_2c}{\delta(a_1, u_2c/\delta)} \ |\  n_2n_3}} \frac{1}{n_2^{1 + \alpha_2 - \alpha_1}n_3^{1 + \alpha_3 - \alpha_1}}}
and
\est{ \Res_{s_1 = 1 - \alpha_i} D_3\pr{s_1, \pm \frac{\lambda_1}{\eta_1}, \al} y^{s_1 - 1} &= \mathcal R_1\left(\frac c\delta, \mathbf a, \mathbf b\right) y^{-\alpha_1} = y^{-\alpha_1}\sumtwo_{\substack{n_2, n_3 \geq 1 \\ \frac{u_2c}{\delta(a_1, u_2c/\delta)} \ |\  n_2n_3}} \frac{1}{n_2^{1 + \alpha_2 - \alpha_1}n_3^{1 + \alpha_3 - \alpha_1}}.}
Similarly, 
\est{\Res_{s_2 = 1 + \beta_1} D_3\pr{s_2, \pm \frac{\lambda_2}{\eta_2}, -\bb} z^{s_2 - 1} = \mathcal R_2\left(\frac c\delta, \mathbf a, \mathbf b\right) z^{\beta_1} = z^{\beta_1}\sumtwo_{\substack{m_2, m_3 \geq 1 \\ \frac{u_1c}{\delta(a_2, u_1c/\delta)} \ |\  m_2m_3}} \frac{1}{m_2^{1 + \beta_1 - \beta_2}m_3^{1 + \beta_1 - \beta_3}} .}

Thus,
\es{\label{def:Gc} &\frac{1}{c^2} \mathcal G(c, {\bf a}, {\bf b}) = \sumtwo_{\substack{n_2, n_3 \geq 1 \\ \frac{u_2c}{(a_1, u_2c)} \ |\  n_2n_3}} \frac{1}{n_2^{1 + \alpha_2 - \alpha_1}n_3^{1 + \alpha_3 - \alpha_1}} \sumtwo_{\substack{m_2, m_3 \geq 1 \\ \frac{u_1c}{(a_2, u_1c)} \ |\  m_2m_3}} \frac{1}{m_2^{1 + \beta_1 - \beta_2}m_3^{1 + \beta_1 - \beta_3}}\\
&= \frac{(a_1, u_2c)(a_2, u_1c)}{u_1u_2c^2} \sum_{\substack{n = 1}}^\infty \frac{\sigma_2\pr{\frac{u_2cn}{(a_1,u_2c)}; \alpha_2 - \alpha_1, \alpha_3 - \alpha_1}}{n} \sum_{\substack{m = 1}}^\infty \frac{\sigma_2\pr{\frac{u_1cm}{(a_2, u_1c)}; \beta_1 - \beta_2, \beta_1 - \beta_3}}{m}.}

From this, we may then check that
\es{\label{def:Malbeta} \mathcal M_{\alpha_1, \beta_1}(s)
&= \prod_{j = 2}^3\zeta(1 + 2s + \alpha_j - \beta_1 ) \zeta(1 + 2s + \alpha_1 - \beta_j) \mathcal J_{\alpha_1, \beta_1}(s),}
where $\mathcal J_{\alpha_1, \beta_1}$ is absolutely convergent in the region Re$(s) = -1/4 + \eps.$  Although we have a priori only verified \eqref{def:Malbeta} for the region $\textrm{Re}(\alpha_1) < \textrm{Re}(\alpha_2) , \textrm{Re}(\alpha_3) $ and $\textrm{Re}(\beta_1) > \textrm{Re}(\beta_2) , \textrm{Re}(\beta_3) $, we see that \eqref{def:Malbeta} must hold for all values of $\alpha_i, \beta_j$ by analytic continuation.

We note that the pole of $\zeta(1-\alpha_1+\beta_1 - 2s)$ at $s = (\alpha_1 - \beta_1)/2$ and the poles of $\zeta(1 + 2s + \alpha_i - \beta_j)$ at $s = (\alpha_i - \beta_j)/2$ cancel with the zeros at the same point from $H(s; \al, \bb)$.  Thus, the integrand in \eqref{eqn:mainint} has only a simple pole at $s = 0$ and is analytic for all values of $s$ with $\tRe s > -1/4 + \epsilon$.  Moving the line of integration to Re$(s) = -1/4 + \eps,$ we then obtain the main term
\est{ &\zeta(1 -\alpha_1 + \beta_1 ) \mathcal M_{\alpha_1, \beta_1}(0)    \pg{\pr{\frac{q}{4\pi^2}}^{\delta(\pi(\al), \pi(\bb))} H(0; \al, \bb)  \int_{-\infty}^{\infty}  G\pr{\frac 12; \pi(\al) + it, \pi(\bb) + it}   \> dt},}
with negligible error term.  To finish the proof of Proposition \ref{prop:mainTM}, we will show that the local factor at prime $p$ of the Euler product of $\zeta(1 -\alpha_1 + \beta_1 )\mathcal M_{\alpha_1, \beta_1}(0)$ is the same as the one in $\mathcal A \mathcal Z\pr{\tfrac 12; \pi(\al), \pi(\bb)}$ defined in (\ref{def:mathcalZp}) and (\ref{def:mathcalAs}).  The details of this are in Appendix \ref{sec:Eulerverif}.

\section{Proof of Proposition \ref{prop:Terror} } \label{sec:properror}
To prove the proposition, it suffices to show that $\mathcal E_i^+(q; \al, \bb) \ll q^{-1/4 + \eps}$ for $i = 1$ and $i = 5$ since the proofs of upper bounds for other terms are similar. We start with a lemma that will be used in the proof.

\begin{lemma}  \label{lem:boundres} Let $\lambda, \eta$ be integers such that $(\lambda, \eta) = 1$ and $\lambda, \eta \ll q^{A}$, where $A$ is a fixed constant. Moreover, for $i,j = 1, 2, 3,$ $ \alpha_i \ll \frac 1{\log q},$ and $|\alpha_i - \alpha_j| \gg \frac{1}{q^{\eps_1}}$ when $i \neq j.$ Then for $\eps > \eps_1,$ 
\est{\Res_{s = 1 - \alpha_i} D_3\pr{s, \frac{\lambda}{\eta}, \al} \ll \frac{q^{\eps}}{\eta},}
where $D_3\pr{s,\frac \lambda\eta, \al }$ is defined in (\ref{eqn:OriginalD}).
\end{lemma}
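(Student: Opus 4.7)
My plan is to use the explicit residue formula derived in Section \ref{sec:calRes}. By the same argument (applied to the residue at $s = 1 - \alpha_i$ for arbitrary $i$, exploiting symmetry of $\sigma_3(n;\al)$ in the shifts), one obtains
\begin{equation*}
\Res_{s = 1 - \alpha_i} D_3\pr{s, \pm \tfrac{\lambda}{\eta}, \al} = \frac{1}{\eta^{2 - 2\alpha_i + \alpha_{j_1} + \alpha_{j_2}}} \sumtwo_{\substack{1 \leq r_1, r_2 \leq \eta \\ \eta \mid r_1 r_2}} \zeta\pr{1 + \alpha_{j_1} - \alpha_i, \tfrac{r_1}{\eta}} \zeta\pr{1 + \alpha_{j_2} - \alpha_i, \tfrac{r_2}{\eta}},
\end{equation*}
with $\{j_1, j_2\} = \{1,2,3\} \setminus \{i\}$. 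Since $\alpha_\ell \ll 1/\log q$, the prefactor is $\eta^{-2} q^{o(1)}$, so the task reduces to bounding the double sum by $O(q^\eps \eta)$.

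I would then bound each Hurwitz zeta factor using the Laurent expansion $\zeta(1+\delta, x) = \delta^{-1} - \psi(x) + O(\delta)$ together with the classical behaviour $\psi(r/\eta) = -\eta/r + O(1)$ on $(0,1]$. The separation hypothesis $|\alpha_i - \alpha_j| \gg q^{-\eps_1}$ converts the $\delta^{-1}$ singularity into a factor $\ll q^{\eps_1}$, yielding the pointwise bound
\begin{equation*}
\pmd{\zeta\pr{1 + \alpha_{j_k} - \alpha_i, \tfrac{r}{\eta}}} \ll q^{\eps_1} + \tfrac{\eta}{r}.
\end{equation*}

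Expanding the product of two such bounds, I face four sums over $(r_1, r_2)$ with $r_i \leq \eta$ and $\eta \mid r_1 r_2$. The key combinatorial identity is that the number of admissible $r_2 \leq \eta$ for each fixed $r_1$ equals $\gcd(r_1, \eta)$, from which elementary divisor estimates yield
\begin{equation*}
\sumtwo_{\substack{r_1, r_2 \leq \eta \\ \eta | r_1 r_2}} 1 \ll \eta^{1+\eps}, \qquad \sumtwo_{\substack{r_1, r_2 \leq \eta \\ \eta | r_1 r_2}} \frac{1}{r_1} \ll \eta^{\eps}, \qquad \sumtwo_{\substack{r_1, r_2 \leq \eta \\ \eta | r_1 r_2}} \frac{1}{r_1 r_2} \ll \frac{\eta^{\eps}}{\eta}.
\end{equation*}
Combining them after multiplying by the relevant powers of $q^{\eps_1}$ and $\eta$ gives the claimed bound (possibly after absorbing a factor of $2$ into $\eps_1$).

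The main obstacle is that the pointwise Hurwitz zeta bound carries the large factor $\eta/r$ when $r$ is small, which on its own threatens to overwhelm the $\eta^{-2}$ prefactor. This is resolved by the divisibility constraint $\eta \mid r_1 r_2$: if $r_1$ is small then $r_2$ is forced into the sparse progression of spacing $\eta/\gcd(r_1, \eta)$, so $r_1$ and $r_2$ cannot both be small simultaneously. The third sum above quantifies this mutual exclusion and supplies the crucial $\eta^{-1}$ saving.
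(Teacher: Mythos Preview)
Your argument is correct but takes a different route from the paper. Both start from the same Hurwitz-zeta expression for the residue,
\[
\Res_{s=1-\alpha_i} D_3\Bigl(s,\tfrac{\lambda}{\eta},\al\Bigr)
=\frac{1}{\eta^{2-2\alpha_i+\alpha_{j_1}+\alpha_{j_2}}}
\sumtwo_{\substack{1\le r_1,r_2\le\eta\\ \eta\mid r_1r_2}}
\zeta\Bigl(1+\alpha_{j_1}-\alpha_i,\tfrac{r_1}{\eta}\Bigr)\,
\zeta\Bigl(1+\alpha_{j_2}-\alpha_i,\tfrac{r_2}{\eta}\Bigr),
\]
but diverge thereafter. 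The paper observes that this double sum is exactly $D(1)$, where
\[
D(s)=\sum_{\substack{m\ge1\\ \eta\mid m}}\frac{\sigma_2(m;\alpha_{j_1}-\alpha_i,\alpha_{j_2}-\alpha_i)}{m^s}
=\frac{1}{\eta^{s}}\Bigl(\sum_{d\mid\eta}\frac{\mu(d)\,\sigma_2(\eta/d;\,\cdot\,)}{d^{\,s+\alpha_{j_1}+\alpha_{j_2}-2\alpha_i}}\Bigr)\,
\zeta(s+\alpha_{j_1}-\alpha_i)\,\zeta(s+\alpha_{j_2}-\alpha_i),
\]
via Lemma~\ref{lem:multofsigma_k}; the bound $D(1)\ll q^{\eps}/\eta$ is then immediate from the two $\zeta$-factors (each $\ll q^{\eps_1}$) and the finite divisor sum. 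You instead bound each Hurwitz factor pointwise by $q^{\eps_1}+\eta/r$ and control the resulting four sums by the gcd-counting estimates you list, all of which are correct. Your approach is more elementary and avoids the M\"obius identity for $\sigma_2$; the paper's is shorter and explains structurally why the answer is a product of ordinary zeta values times a divisor-type factor.

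One minor point: your justification of the pointwise bound via the Laurent expansion $\zeta(1+\delta,x)=\delta^{-1}-\psi(x)+O(\delta)$ is not quite uniform as $x\to0^+$, since the higher Stieltjes constants $\gamma_n(x)$ blow up there. The bound you need is nonetheless correct and follows cleanly from the recurrence $\zeta(1+\delta,x)=x^{-1-\delta}+\zeta(1+\delta,1+x)$: the first term is $\ll\eta/r$ (using $|\delta|\log\eta\ll A$), and the second is $\delta^{-1}+O(1)$ uniformly for $x\in(0,1]$ since $1+x\in[1,2]$ stays away from $0$.
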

\begin{proof} By symmetry, it suffices to prove the statement for the residue at $1- \alpha_1.$ For $\tRe(s) > 1 + \tRe(\alpha_1 - \alpha_j)$, where $j = 2, 3$, let  
\est{D(s) &:= \sum_{\substack{m = 1 \\ \eta| m}}^{\infty} \frac{\sigma_2(m; \alpha_2 - \alpha_1, \alpha_3 - \alpha_1)}{m^s} \\ 
&= \frac{1}{\eta^s}\sum_{d |\eta} \frac{\mu(d)\sigma_2\pr{\frac{\eta}{d}; \alpha_2 - \alpha_1, \alpha_3 - \alpha_1}}{d^{s + \alpha_2 + \alpha_3 - 2\alpha_1}} \zeta( s + \alpha_2 - \alpha_1)\zeta(s + \alpha_3 - \alpha_1),}
where we have used Lemma \ref{lem:multofsigma_k} to derive the last line.  Now, $D(s)$ can be continued analytically to the whole complex plane except for poles at $s = 1 + \alpha_1 - \alpha_j$ for $j = 2, 3.$  Moreover, $D(1) \ll \frac{q^{\eps}}{\eta}.$  

For $i = 1, 2, 3,$ and $\tRe(s + \alpha_i) > 1$, the sum in the Lemma can be rewritten as
\est{ 
& \frac{1}{\eta^{3s + \alpha_1 + \alpha_2 + \alpha_3}} \sumthree_{r_1, r_2, r_3 \mod \eta}\e{\frac{\lambda r_1 r_2r_3}{\eta}}\zeta\pr{s+ \alpha_1; \frac{r_1}{\eta}}\zeta\pr{s+ \alpha_2; \frac{r_2}{\eta}}\zeta\pr{s+ \alpha_3; \frac{r_3}{\eta}}. }

This sum can be analytically continued to the whole complex plane except for poles at $s = 1 - \alpha_i$ for $i = 1, 2, 3.$ After some arrangement, the contribution of the residue at $s = 1 - \alpha_1$ is
\est{ 
&\frac{1}{\eta^{2 + \alpha_2 + \alpha_3 - 2\alpha_1}} \sumtwo_{\substack{r_2, r_3 \mod \eta\\ \eta | r_2r_3}}\zeta\pr{1 + \alpha_2 - \alpha_1; \frac{r_2}{\eta}}\zeta\pr{1 + \alpha_3 - \alpha_1; \frac{r_3}{\eta}} = D(1), }
and the Lemma follows.

\end{proof}

\subsection{Bounding $\mathcal E_1^+(q; \al, \bb)$} \label{sec:proofE1}
With the same notation as in Section \ref{sec:provepropTM} and $\mathscr B$ defined as in \eqref{eqn:defB}, 
we recall that
\est{
	\mathcal E_{1, \al, \bb}^+(q; \al, \bb) = \frac {2\pi}{q}&\sumfour_{\substack{a_1, b_1, a_2, b_2 \geq 1 \\ (a_1a_2b_1b_2, q) = 1}} \frac{ \mathscr B(a_1, b_1; \al)}{a_1^{\frac 32}b_1} \frac{ \mathscr B(a_2, b_2; -\bb)}{a_2^{\frac 32}b_2} \sumd_M \sumd_N  E_{1, \al, \bb}^+(\mathbf{a}, \mathbf{b}, M, N) ,}
where 
\est{E_{1, \al, \bb}^+(\mathbf{a}, \mathbf{b}, M, N) := \sum_{c < C} \sumstar_{x \bmod \delta c} \frac{\mathcal T_{1, \al, \bb}^+(c,x)}{c} = \sum_{\delta < C} \frac 1\delta \sum_{c < \frac{C}{\delta}} \sumstar_{\substack{x \bmod \delta c \\ (u_1x-u_2, c\delta) = \delta}} \frac{\mathcal T^+_{1, \al, \bb}(c\delta, x)}{c},}
\est{ \mathcal T_{1, \al, \bb}^+ (c\delta,x) &:= \frac{\pi^{3/2 + \alpha_1 + \alpha_2 + \alpha_3}}{\eta_1^{3+\alpha_1 + \alpha_2 + \alpha_3}} \sum_{i = 1}^3 \Res_{s = 1 + \beta_i} D_3\pr{s, \frac{\lambda_2}{\eta_2}, -\bb} \sum_{n = 1}^{\infty} A_3\pr{n,  \frac{\lambda_1}{\eta_1}, \al} \mathcal F_1^+(c\delta,n; \al, \bb),}
for $\lambda_1 = \frac{\bar q (u_2\bar x - u_1)a_1}{\delta(a_1, u_2c)},$ $\eta_1 = \frac{u_2 c}{(a_1, u_2c)},$  $\lambda_2 = \frac{\bar q (u_1 x - u_2)a_2}{\delta(a_2, u_1c)},$ $\eta_2 = \frac{u_1 c}{(a_2, u_1c)},$ $u_i = \frac{a_ib_i}{(a_1b_1, a_2b_2)}$ and
\est{\mathcal F_1^+(c\delta,n; \al, \bb) &= \int_{0}^{\infty} \int_0^{\infty} \frac{1}{y^{1/2 }} \frac{z^{s - 1}}{z^{1/2}} V_{\al, \bb} \pr{a_1^3b_1^2y, a_2^3b_2^2z; q} J_{k-1} \pr{\frac{4\pi \sqrt{a_2ya_1z}}{c\delta q}} f\pr{\frac yN} f\pr{\frac zM} \times \\
	&\hskip 0.5in \times  i^{- k}\e{\frac{a_1^2b_1y}{c\delta qa_2b_2} + \frac{a_2^2b_2z}{c\delta qa_1b_1}}  U_3\pr{\frac{\pi^3ny}{\eta_1^3}; \al} \>dy \> dz.  }
We first note that the contribution from the terms $a_1^3b_1^2y \gg q^{3/2 + \eps}$ or $a_2^3b_2^2 z \gg q^{3/2 + \eps}$ can be bounded by $q^{-A}$ for any $A$ due to the factor $V_{\alpha, \beta}(a_1^3b_1^2y, a_2^3b_2^2 z; q).$  So from now on we assume $a_1^3b_1^2N \ll q^{3/2 + \eps}$ and $a_2^3b_2^2 M \ll q^{3/2 + \eps}.$  

Moreover, the dyadic sum over $M$ and $N$ contains only $\ll \log^2q$ terms, so it suffices to prove that
\begin{equation}
E_{1, \al, \bb}^+(\mathbf{a}, \mathbf{b}, M, N)\ll a_1^{1/2}q^{3/4+\eps},
\end{equation}for fixed $\mathbf{a}, \mathbf{b}, M, N $ satisfying $a_1^3b_1^2N \ll q^{3/2 + \eps}$ and $a_2^3b_2^2 M \ll q^{3/2 + \eps}$.  On a first reading, the reader may set $a_1 = a_2 = b_1 = b_2 = 1$ as this simplifies the notation without substantially changing the calculation.

We now write
\begin{equation}
E_{1, \al, \bb}^+(\mathbf{a}, \mathbf{b}, M, N)  = H_1+ H_2,
\end{equation}
where $H_1$ is the contribution from the sum over $n \le \frac{\eta_1^3}{N} q^\eps$, and $H_2$ is the rest. 

\subsubsection{Bounding $H_1$} \label{sec:Esm}

By (\ref{lem:asympUVxsmall}), $ U_3 \pr{\frac{\pi^3 n y}{\eta_1^3}} \ll q^{\eps}$ when $n \ll \frac{\eta_1^3 q^\eps }{N} $. This and (\ref{bound:Bessel1}) gives us that
$$\mathcal F_1^+(c\delta,n; \al, \bb) \ll M^{1/2}N^{1/2}q^\eps \min\pg{\pr{\frac{\sqrt{a_1a_2MN}}{c\delta q}}^{-\frac 12},\pr{\frac{\sqrt{a_1a_2MN}}{c\delta q}}^{k-1}}.
$$

Then, from Lemma \ref{lem:boundres}, Lemma \ref{lem:A3B3Kloos}, (\ref{def:Ak}), and using the fact that $(a_2, u_1c) \leq a_2,$ and $\frac{1}{(a_1, u_2c)} \leq 1$,  we obtain that $H_1 $ is bounded by
\est{& M^{\frac 12}N^{\frac 12} q^{\eps} \sum_{\delta < C} \sum_{c < \frac C\delta} \frac{1}{\eta_1^3 \eta_2}\sum_{n \ll \frac{\eta_1^3q^\eps}{N}} \sum_{\substack{h | \eta_1, \ h | n^2}} \eta_1^{\frac 32}\sqrt h \min\pg{\pr{\frac{\sqrt{a_1a_2MN}}{c\delta q}}^{-\frac 12},\pr{\frac{\sqrt{a_1a_2MN}}{c\delta q}}^{k-1}} \\
&\ll M^{\frac 54} N^{\frac 14} q^{\eps} \frac{a_1^{\frac 34} a_2^{\frac 74}u_2^{\frac 32}}{u_1q^{\frac 32}}  \ll q^{3/4+\eps},
 }
as desired.  In the above, we have used $(a_1b_1, a_2b_2) \geq 1$, and $a_1^3b_1^2N \ll q^{3/2 + \eps}$ and $a_2^3b_2^2 M \ll q^{3/2 + \eps}$.

\subsubsection{Bounding $H_2$} \label{sec:proofE1b}
We start from re-writing $\mathcal F_1^+(c\delta, n; \al, \bb)$ as 
\est{ 
&\int_{-\infty}^{\infty} \int_{(\frac 1{\log q})}  V_1(s, t) \int_{0}^{\infty} z^{\beta_1}  F_{s - it}\pr{\frac zM} \e{\frac{a_2^2b_2z}{c\delta qa_1b_1}}\mathcal I(n, z) \> dz \> \frac{ds}{s} \> dt,}
where $V_1(s,t) := V_1({\bf a}, {\bf b}, \al, \bb, s, t, M, N) $ is defined as
\es{\label{def:V1st} \frac{1}{2\pi i} \pr{\frac{q}{4\pi^2}}^{\delta(\al, \bb)} \pr{ \frac{a_2^3b_2^2 }{a_1^3b_1^2}}^{it} G\pr{\frac 12 + s; \al + it, \bb + it} H(s; \al, \bb) \pr{\frac{q}{4\pi^2}}^{3s} M^{-\frac 12 - s + it} N^{-\frac 12 - s - it},}
and $\mathcal I(n, z) := \mathcal I_{\al}(\mathbf{a}, \mathbf{b}, N, n, z, c, \delta)$ is defined as
\es{\label{def:IabN}  \int_0^{\infty}F_{s + it}\pr{\frac yN} \e{\frac{a_1^2b_1y}{c\delta qa_2b_2} }  J_{k-1} \pr{\frac{4\pi \sqrt{a_2ya_1z}}{c\delta q}} U_3\pr{\frac{\pi^3ny}{\eta_1^3}; \al} \>dy,}
and
$F_{v} \pr{x} := \frac{1}{x^{\frac 12 + v}} f\pr{x}.$ Note that the $j$-th derivative, $F_{s \pm it}^{(j)}\pr{\frac{y}{N}} = O(|t|^j N^{-j}). $

Note that the trivial bound for $\mathcal F_1^+ (c\delta, n; \al, \bb)$ is 
\begin{equation}\label{eqn:Fbdd}
\mathcal F_1^+ (c\delta, n; \al, \bb) \ll \sqrt{MN} (qn)^\eps.
\end{equation}

There are two cases to consider: (1) $c > \frac{8\pi\sqrt{a_1a_2MN}}{\delta q}$, and (2)  $c \leq  \frac{8\pi\sqrt{a_1a_2MN}}{\delta q}$.
\\
\\
{\bf Case 1: $c > \frac{8\pi\sqrt{a_1a_2MN}}{\delta q}$.}

By (\ref{asympJxSm}), (\ref{lem:asymptUxbig}), and since $\frac{\pi^3 ny}{\eta_1^3} \gg q^\eps$, $\mathcal I(n, z)$ can be written as

\est{ 
	& \sum_{j = 1}^K \bfrac{\pi^3 ny}{\eta_1^3}^{\frac{\beta_1 + \beta_2 + \beta_3}{3}} \pr{\frac{\eta_1}{\pi n^{\frac 13}N^{\frac 13}}}^{j} \sum_{\ell = 0}^{\infty} \frac{(-1)^{\ell}}{\ell ! (\ell + k - 1)!} \int_0^{\infty}   \mathscr F_j(y, z,\ell) \e{\frac{a_1^2b_1y}{c\delta qa_2b_2} }  \times \\
	& \hskip 2in \times \pg{ c_j \e{\frac{3n^{\frac 13}y^{\frac 13}}{\eta_1}} + d_j \e{-\frac{3n^{\frac 13}y^{\frac 13}}{\eta_1}} }  \>dy + O\pr{q^{-\eps (K + 1)}}, }
where $c_j, d_j$ are some constants, and 
\est{ 
	\mathscr F_j(y, z,\ell) = F_{s + it}\pr{\frac yN}  \pr{\frac{2\pi \sqrt{a_2ya_1z}}{c\delta q}}^{2\ell + k -1} \pr{\frac{y}{N}}^{\frac{-j}{3}} }
is supported on $y\in [N, 2N]$.  Moreover,  $\frac{\partial^i \mathscr F_j(y, z, \ell)}{\partial y^i} \ll \frac{1}{2^{2\ell}}\frac {|t|^i}{N^i}$ and $\mathscr F_j(y,z, \ell) \ll 1$. 
Thus, picking $K$ large enough so that $q^{-\eps(K+1)}$ is negligible, it suffices to bound integrals of the form 
\est{\int_0^{\infty} &\mathscr F_j(y, z, \ell) \e{\theta_z(y, n)} \>dy,}
where
$$\theta_z(y, n) = \pm \frac{3n^{\frac 13}y^{\frac 13}}{\eta_1} + By , \ \ \ \ \textrm{and} \ \ \ B = \frac{a_1^2b_1}{c\delta q a_2b_2}.$$
Taking the derivative of $\theta_z(y, n)$ with respect to $y$, we have that $$\theta'_z(y, n) = B \pm \frac{n^{\frac 13}}{y^{\frac 23}\eta_1} .$$
When $n \geq 64(B\eta_1)^3N^{2}$ or $n \leq  \frac{1}{4}{(B\eta_1)^3N^{2}},$  $|\theta_z'(y, n)| \gg \frac{n^{\frac 13}}{y^{\frac 23}\eta_1} \gg \frac{q^{\eps}}{N},$ since $n \gg \frac{\eta_1^3 q^\eps}{N}$.  Thus integrating by parts many times shows that the contribution from these terms is negligible. Therefore we only consider the contribution from when $\frac 1{4}(B\eta_1)^3N^{2} \leq n \leq 64(B\eta_1)^3N^{2}.$  Note however that
$$(B\eta_1)^3 N^2 \ll \frac{(a_1^2 b_1)^3N^2}{\delta^3 q^3} \ll \frac{q^\eps}{\delta^3},
$$  and that there are no terms of this form unless $N \gg \frac{q^{3/2}}{(a_1^2 b_1)^{3/2}}$ and $\delta \ll q^\eps$.
From (\ref{eqn:Fbdd}), trivially $\mathcal F_1^+(c\delta, n; \al, \bb) = O(M^{\frac 12} N^{\frac 12}(nq)^\eps).$   Hence the contribution to $H_2$ from these terms is bounded by
\est{& M^{\frac 12}N^{\frac 12}q^\eps \sum_{\delta < q^\eps} \sum_{c  \gg \frac{\sqrt{a_1a_2MN}}{\delta q}} \frac{1}{\eta_1^{\frac 32}\eta_2} \sum_{h | \eta_1} \sqrt h  \sum_{\substack{n \ll q^\eps \\ h^2 | n}} \frac{\eta_1}{ n^{\frac 13}N^{\frac 13}} \\ 
&\ll a_1^{\frac 54}b_1^{\frac 12} a_2^{\frac 34} M^{\frac 14}N^{\frac 14}  q^{\eps} \ll a_1^{\frac 12}q^{\frac 34+\eps},  }
similar to before.
\\
\\
{\bf Case 2: $c \leq \frac{8\pi\sqrt{a_1a_2MN}}{\delta q}$.}

By (\ref{asympJxbig}), we write $\mathcal I(n, z) $ as

\est{  \int_0^{\infty}  \pg{R^+(y, z)  + R^-(y, z)} \frac{\sqrt{c\delta q}}{\pi \pr{a_1a_2yz}^{\frac 14} }U_3\pr{\frac{\pi^3ny}{\eta_1^3}; \al} \e{\frac{a_1^2b_1y}{c\delta qa_2b_2} } \>dy , }
where 
\est{R^\pm(y, z) := F_{s + it}\pr{\frac yN}  W^\pm\pr{\frac{4\pi\sqrt{a_1a_2yz}}{c\delta q}}{    \e{\pm \pr{\frac{2\sqrt{a_1a_2yz}}{c\delta q} - \frac k4 + \frac 18}}},}
and $W^+ = W$, $W^- = \overline{ W}.$

Similar to Case 1, we explicitly write $U_3\pr{\frac{\pi^3ny}{\eta_1^3}; \al}$ as in  Equation (\ref{lem:asymptUxbig}) so it suffices to bound

\est{\sum_{j = 1}^K  \bfrac{\pi^3 ny}{\eta_1^3}^{\frac{\beta_1 + \beta_2 + \beta_3}{3}} &\pr{\frac{\eta_1}{\pi n^{\frac 13}N^{\frac 13}}}^{j}\frac{1}{N^{\frac 14}}\frac{\sqrt{c\delta q}}{\pi \pr{a_1a_2z}^{\frac 14} }\int_0^{\infty} \mathscr H_j^\pm(y, z)\pg{ c_j \e{\frac{3n^{\frac 13}y^{\frac 13}}{\eta_1}} + d_j \e{-\frac{3n^{\frac 13}y^{\frac 13}}{\eta_1}} } \times \\
&\times \e{\frac{a_1^2b_1y}{c\delta qa_2b_2} }\e{\pm\pr{{\frac{2\sqrt{a_1a_2yz}}{c\delta q} - \frac k4 + \frac 18}}} \>dy + O\pr{q^{-\eps (K + 1)}},
}
where $\mathscr H_j^\pm(y, z) = \frac{F_{s + it}\pr{\frac yN}  W^\pm\pr{\frac{4\pi\sqrt{a_1a_2yz}}{c\delta q}}}{\pr{\frac yN}^{\frac j3 + \frac 14} }$ is supported on $y \in [N, 2N]$.  Note that $\frac{\partial^{(i)} \mathscr H_j^\pm(y,z) }{\partial y^i} \ll_{j,i} \frac {|t|^i}{N^i}$ and $\mathscr H_j^\pm(y,z) \ll 1$. Thus, the integration over $y$ is of the form
\est{\int_0^{\infty} &\mathscr H^\pm_j(y, z) \e{g_z(y, n)} \>dy,}
where 
$$g_z(y, n) = \pm \frac{3n^{\frac 13}y^{\frac 13}}{\eta_1} \pm \pr{2A\sqrt y + \frac k4 - \frac 18} + By , \ \ \ \ \ \ A = \frac{\sqrt{a_1a_2z}}{c\delta q}, \ \  B = \frac{a_1^2b_1}{c\delta q a_2b_2}.$$
Differentiating $g_z(y, n)$ with respect to $y$, we have $$g'_z(y, n) = \pm \frac{n^{\frac 13}}{y^{\frac 23}\eta_1}  \pm \frac{A}{y^{\frac 12}} + B.$$

When $a_2^{\frac 32}b_2M^{\frac 12} \geq 4a_1^{\frac 32}b_1N^{\frac 12}, $ it follows that $\frac A{y^{\frac 12}} \geq \frac A{y^{\frac 12}} - B \geq \frac 12 \frac A{y^{\frac 12}} $ and $\frac 32 \frac A{y^{\frac 12}} \geq \frac A{y^{\frac 12}} + B \geq \frac A{y^{\frac 12}}.$  Therefore 
$$ \frac{1}{2} \frac{A}{y^{\frac 12}} \leq \left|  \pm \frac{A}{y^{\frac 12}} + B \right| \leq \frac{3}{2} \frac{A}{y^{\frac 12}}.$$

When $n \geq 54(A\eta_1)^3N^{\frac 12}$ or $n \leq  \frac{1}{64}(A\eta_1)^3N^{\frac 12},$ we have that $|g_z'(y, n)| \gg \frac{n^{\frac 13}}{y^{\frac 23}\eta_1} \gg \frac{q^{\eps}}{N},$ since $n \gg \frac{\eta_1^3 q^\eps}{N}$.
Integrating by parts many times shows that these terms are negligible. We then consider only the terms when $\frac {1}{64}(A\eta_1)^3N^{\frac 12} \leq n \leq 54(A\eta_1)^3N^{\frac 12}.$  Note however that
$$(A\eta_1)^3 N^{1/2} \ll \bfrac{\sqrt{a_1a_2M} u_2c}{c\delta q}^3 N^{1/2}
\ll \bfrac{\sqrt{a_1}}{\delta q^{\frac 14}}^3 N^{1/2} \ll \frac{q^{\eps}}{\delta^3},
$$and that the left side is only $\gg 1$ if $N \gg q^{3/2}/a_1^3$ and $\delta \ll q^\eps$.

By (\ref{eqn:Fbdd}), the contribution of $\mathcal F_1^+ (c\delta, n; \al, \bb)$ to the terms in this range is $O(M^{\frac 12} N^{\frac 12}(nq)^\eps).$ So the contribution to $H_2$ from these terms is bounded by
\est{& M^{\frac 12}N^{\frac 12} q^\eps\sum_{\delta < q^\eps} \sum_{c  \ll \frac{\sqrt{a_1a_2MN}}{\delta q}} \frac{1}{\eta_1^{\frac 32}\eta_2} \sum_{h | \eta_1} \sqrt h  \sum_{\substack{n \ll q^\eps \\ h^2 | n}} \pr{\frac{\eta_1}{ n^{\frac 13}N^{\frac 13}}} \frac{\sqrt{c\delta q}}{\pr{a_1a_2MN}^{\frac 14} } \\
& \ll a_1^{\frac 54}b_1^{\frac 12} a_2^{\frac 34} M^{\frac 14}N^{\frac 14}  q^{\eps} \ll a_1^{\frac 12}q^{\frac 34+\eps} }
which suffices.

When $4 a_2^{\frac 32}b_2M^{\frac 12} \leq  a_1^{\frac 32}b_1N^{\frac 12}, $ we have that $ \frac 12 B <  B - \frac A{y^{\frac 12}}   <  B $ and $\frac 32 B >  A{y^{\frac 12}} + B >  B.$ By the same arguments as in Case 1, the range of $n$ that should be considered is of the size $(B\eta_1)^3N^{2}$ and give a contribution to $H_2$ bounded by $a_1^{1/2}q^{3/4 + \eps}.$ 
 
When $\frac 14 a_1^{\frac 32}b_1N^{\frac 12} <  a_2^{\frac 32}b_2M^{\frac 12} <  4 a_1^{\frac 32}b_1N^{\frac 12}, $ we have that $\frac A{y^{\frac 12}} \asymp B,$ and so the range of $n$ that should be considered is of the size $(A\eta_1)^3N^{\frac 12}$ by the same arguments as above. Hence the contribution from these terms to $H_2$ is  $  O(a_1^{1/4}q^{\frac 34 + \eps}).$

This completes the proof of Proposition \ref{prop:Terror} for $\mathcal E_1^+(q; \al, \bb)$. The same proof applies to bound $\mathcal E_{i}^\pm(q; \al, \bb)$ for $i = 2, 3, 4.$

\subsection{Bounding $\mathcal E^+_5(q; \al, \bb)$} \label{sec:proofE5} We first recall that 
\est{
	\mathcal E_5^+(q; \al, \bb) = \frac {2\pi}{q}&\sumfour_{\substack{a_1, b_1, a_2, b_2 \geq 1\\ (a_1a_2b_1b_2, q) = 1}} \frac{ \mathscr B(a_1, b_1; \al)}{a_1^{\frac 32}b_1} \frac{ \mathscr B(a_2, b_2; -\bb)}{a_2^{\frac 32}b_2} \sumd_M \sumd_N  E_{5, \al, \bb}^+(\mathbf{a}, \mathbf{b}, M, N) ,}
where 
\est{E_{5, \al, \bb}^+(\mathbf{a}, \mathbf{b}, M, N) := \sum_{c < C} \sumstar_{x \bmod \delta c} \frac{\mathcal T_{5, \al, \bb}^+(c,x)}{c} = \sum_{\delta < C} \frac 1\delta \sum_{c < \frac{C}{\delta}} \sumstar_{\substack{x \bmod \delta c \\ (u_1x-u_2, c\delta) = \delta}} \frac{\mathcal T^+_{5, \al, \bb}(c\delta, x)}{c};}

\est{ \mathcal T_{5, \al, \bb}^+(c\delta,x) &:= \frac{\pi^{3 + \sum_{i = 1}^3 (\alpha_i - \beta_i)}}{\eta_1^{3+\sum_{i=1}^3 \alpha_i}\eta_2^{3 - \sum_{i=1}^3 \beta_i}}   \sumtwo_{n,m \geq 1} A_3\pr{n,  \frac{\lambda_1}{\eta_1}, \al}  A_3\pr{m,  \frac{\lambda_2}{\eta_2}, -\bb} \mathcal F_5^+(c\delta,n, m; \al, \bb);}
for $\lambda_1 = \frac{\bar q (u_2\bar x - u_1)a_1}{\delta(a_1, u_2c)},$ $\eta_1 = \frac{u_2 c}{(a_1, u_2c)},$  $\lambda_2 = \frac{\bar q (u_1 x - u_2)a_2}{\delta(a_2, u_1c)},$ $\eta_2 = \frac{u_1 c}{(a_2, u_1c)},$  $u_i = \frac{a_ib_i}{(a_1b_1, a_2b_2)}$, and
\est{\mathcal F_5^+(c\delta,n, m, \al, \bb) &= \int_{0}^{\infty} \int_0^{\infty} \frac{1}{y^{1/2 }} \frac{1}{z^{1/2}} V_{\al, \bb} \pr{a_1^3b_1^2y, a_2^3b_2^2z; q} J_{k-1} \pr{\frac{4\pi \sqrt{a_2ya_1z}}{c\delta q}} f\pr{\frac yN} f\pr{\frac zM} \times \\
	&\hskip 0.5in \times  i^{- k}\e{\frac{a_1^2b_1y}{c\delta qa_2b_2} + \frac{a_2^2b_2z}{c\delta qa_1b_1}}  U_3\pr{\frac{\pi^3ny}{\eta_1^3}; \al} U_3\pr{\frac{\pi^3mz}{\eta_2^3}; - \bb}  \>dy \> dz.  }
The proofs in this section are very similar to the ones in the previous section.  Previously, we had one sum over $n$ and now we have a double sum over $m$ and $n$ which can be treated in a similar manner.  To be precise, we begin by dividing $E_{5, \al, \bb}^+(\mathbf{a}, \mathbf{b}, M, N)$ into $ \sum_{i = 1}^4 E^+_{5, i}(\mathbf{a}, \mathbf{b}, M, N),$ where $E^+_{5, i}(\mathbf{a}, \mathbf{b}, M, N) := E^+_{5, i, \al, \bb}(\mathbf{a}, \mathbf{b}, M, N)$ is the contribution from case $i$  below.  
\begin{enumerate}
\item $n \ll \frac{\eta_1^3q^{\eps}}{N}$ and $m \ll \frac{\eta_2^3q^{\eps}}{M};$
\item $n \gg \frac{\eta_1^3q^{\eps}}{N}$ and $m \ll \frac{\eta_2^3q^{\eps}}{M};$
\item $n \ll \frac{\eta_1^3q^{\eps}}{N}$ and $m \gg \frac{\eta_2^3q^{\eps}}{M};$
\item $n \gg \frac{\eta_1^3q^{\eps}}{N}$ and $m \gg \frac{\eta_2^3q^{\eps}}{M}.$
\end{enumerate}

By symmetry, the treatment for cases (2) and (3) is the same, so we will show only the second case. 

Similar to Section \ref{sec:proofE1}, the contribution from the terms $a_1^3b_1^2y \gg q^{3/2 + \eps}$ or $a_2^3b_2^2 z \gg q^{3/2 + \eps}$ can be bounded by $q^{-A}$ due to the factor $V_{\al, \bb}(a_1^3b_1^2y, a_2^3b_2^2 z; q).$  Thus it suffices to prove that  

\begin{equation} \label{eqn:E5ibound}
E_{5, i}^+(\mathbf{a}, \mathbf{b}, M, N)\ll a_1^{1/2}a_2^{1/2}q^{3/4+\eps},
\end{equation}
for fixed $\mathbf{a}, \mathbf{b}, M, N $ satisfying $a_1^3b_1^2N \ll q^{3/2 + \eps}$ and $a_2^3b_2^2 M \ll q^{3/2 + \eps}$.  In fact, we will prove the stronger bound $E_{5, i}^+(\mathbf{a}, \mathbf{b}, M, N)\ll a_1^{1/2}a_2^{1/2}q^{1/2+\eps}$.

\subsubsection{Bounding $E^+_{5,1}({\bf a}, {\bf b}, M, N)$} \label{sec:boundT3mnsmall}
For this case, $ U_3 \pr{\frac{\pi^3 n y}{\eta_1^3}; \al} \ll q^{\eps},$ and $ U_3 \pr{\frac{\pi^3 mz}{\eta_2^3}; -\bb } \ll q^{\eps}$  by  (\ref{lem:asympUVxsmall}).  Similar to the arguments in Section \ref{sec:Esm}, from  Lemma \ref{lem:boundres}, Lemma \ref{lem:A3B3Kloos} and (\ref{bound:Bessel1}), we have that for $k \geq 5$, $E^+_{5,1}({\bf a}, {\bf b}, M, N)$ is bounded by

\est{&\ll M^{\frac 12}N^{\frac 12} q^{\eps} \sum_{\delta < C} \sum_{c < \frac C\delta} \frac{1}{\eta_1^3 \eta_2^3}\sum_{n \ll \frac{\eta_1^3q^\eps}{N}} \sum_{\substack{h_1 | \eta_1, \ h_1 | n^2}} \eta_1^{\frac 32}\sqrt h_1 \sum_{m \ll \frac{\eta_2^3q^\eps}{M}} \sum_{\substack{h_2 | \eta_2, \ h_2 | m^2}} \eta_2^{\frac 32}\sqrt h_2  \\
&\hskip 2in \times  \min\pg{\pr{\frac{\sqrt{a_1a_2MN}}{c\delta q}}^{-\frac 12},\pr{\frac{\sqrt{a_1a_2MN}}{c\delta q}}^{k-1}} \\
&\ll M^{-\frac 12}N^{-\frac 12}q^{\eps}  \sum_{\delta < C} \pg{\sum_{\frac{\sqrt{a_1a_2MN}}{q\delta} \ll c < \frac C\delta} \eta_1^{\frac 32} \eta_2^{\frac 32} \pr{\frac{\sqrt{a_1a_2MN}}{c\delta q}}^{k-1} + \sum_{c \ll \frac{\sqrt{a_1a_2MN}}{q\delta}} \eta_1^{\frac 32} \eta_2^{\frac 32} \pr{\frac{\sqrt{a_1a_2MN}}{c\delta q}}^{-\frac 12} }\\
&\ll M^{\frac 32} N^{\frac 32} q^{\eps} \frac{a_1^{2} a_2^{2}u_1^{\frac 32}u_2^{\frac 32}}{q^{4}} \ll  q^{\frac 12 + \eps}.
 }


\subsubsection{Bounding $E^+_{5,2}({\bf a}, {\bf b}, M, N)$}

We can write $\mathcal F_5^+(c\delta, n, m; \al, \bb)$ as 
\est{ 
& \int_{-\infty}^{\infty} \int_{(\frac 1{\log q})} V_1(s,t) \int_{0}^{\infty}  F_{s - it}\pr{\frac zM} \e{\frac{a_2^2b_2z}{c\delta qa_1b_1}} U_3\pr{\frac{\pi^3mz}{\eta_2^3}; -\bb} \mathcal I(n,z) \> dz \> \frac{ds}{s} \> dt,}
where $V_1(s,t)$ and $\mathcal I(n, z)$ are defined as in (\ref{def:V1st}) and (\ref{def:IabN}), respectively, and $F_{v} \pr{x} = \frac{1}{x^{\frac 12 + v}} f\pr{x}.$ Note that $F_{s \pm it}^{(j)}\pr{\frac{y}{N}} \ll |t|^j N^{-j}. $

The integration over $z$ can be bounded trivially, and the sum over $m, h_2$ can be treated in the same way as in Section \ref{sec:boundT3mnsmall}. For the integration over $y$, we argue as in Case 1 and 2 of Section \ref{sec:proofE1b} and obtain that $E^+_{5,2}({\bf a}, {\bf b}, M, N) \ll a_1^{\frac 12}q^{\frac 12 + \eps}.$

\subsubsection{Bounding $E^+_{5,4}({\bf a}, {\bf b}, M, N)$} 
We split into two cases as follows.
\\
{\bf Case 1: $c > \frac{8\pi\sqrt{a_1a_2MN}}{\delta q}$.} We use (\ref{asympJxSm}) and (\ref{lem:asymptUxbig}), and the integral that we consider is of the form
\est{
\int_0^{\infty} \int_0^{\infty} G(y, z) \e{\frac{a_1^2 b_1y}{c\delta q a_2b_2} +\frac{a_2^2 b_2z}{c\delta q a_1b_1} \pm \frac{3 n^{\frac 13} y^{\frac 13}}{\eta_1} \pm \frac{3 m^{\frac 13} z^{\frac 13}}{\eta_2}} \> dy \> dz,}
where $\frac{\partial^j \partial^i G(y, z)}{\partial y^j \partial z^i} \ll \frac{1}{N^j M^i},$ $G(x,y) \ll 1,$ and it is supported in $[N, 2N] \times [M, 2M]$. Therefore, the integration over $y, z$ above is $O(MN).$

By the same arguments as case 1 of Section \ref{sec:proofE1b}, it is sufficient to consider when $c_1(B_1\eta_1)^3N^2 \ll n \ll c_2(B_1\eta_1)^3N^2 $ and $c_1(B_2\eta_2)^3M^2 \ll m \ll c_2(B_2\eta_2)^3M^2,$ where $c_1, c_2$ are some constants, $B_1 = \frac{a_1^2b_1}{c\delta qa_2b_2}$, and $B_2 = \frac{a_2^2b_2}{c\delta qa_1b_1}$, since the terms outside these ranges give negligible contribution from integration by parts many times. By the same arguments as in Section \ref{sec:proofE1}, 
$$ (B\eta_1)^3N^2 \ll \frac{(a_1^2b_1)^3N^2}{\delta^3q^3} \ll \frac{q^\eps}{\delta^3},  \ \ \ \ (B\eta_2)^3M^2 \ll \frac{(a_2^2b_2)^3M^2}{\delta^3q^3} \ll \frac{q^\eps}{\delta^3}.$$
So there are no terms of this form unless $N \gg \frac{q^{3/2}}{(a_1^2b_1)^{3/2}}, M \gg \frac{q^{3/2}}{(a_2^2b_2)^{3/2}},$ and $\delta \ll q^\eps.$  We then obtain that the contribution from these terms to $E^+_{5,4}({\bf a}, {\bf b}, M, N)$ is bounded by 

\est{ & M^{\frac 12}N^{\frac 12} q^\eps \sum_{\delta < q^\eps} \sum_{ c  \gg \frac{\sqrt{a_1a_2MN}}{\delta q}} \frac{1}{\eta_1^{\frac 32}\eta_2^{\frac 32}} \sum_{h_1 | \eta_1} \sqrt h_1  \sum_{\substack{n \ll q^\eps \\ h_1^2 | n}} \sum_{h_2 | \eta_1} \sqrt h_2  \sum_{\substack{m \ll q^\eps \\ h_2^2 | m}} 1 \ll a_1^{\frac 12}a_2^{\frac 12} q^{\frac 12 + \eps}. }
\\
\\
{\bf Case 2: $c \leq \frac{8\pi\sqrt{a_1a_2MN}}{\delta q}$.} For this case, we use (\ref{asympJxbig}), and the integral that we consider is of the form

\est{\frac{\eta_1 \eta_2}{m^{\frac 13}n^{\frac 13} M^{\frac 13}N^{\frac 13}} \frac{\sqrt{c\delta q}}{M^{\frac 14}N^{\frac 14}(a_1a_2)^{\frac 14}}&\int_0^{\infty} \int_0^{\infty} \mathcal H(y, z) \e{g(y,z, n, m)} \> dy \> dz, }
where $\frac{\partial^j \partial^i \mathcal H(y, z)}{\partial y^j \partial z^i} \ll \frac{1}{N^j M^i},$ $\mathcal H(x,y)$ is supported in $[N, 2N] \times [M, 2M]$, and
\est{
& g(y, z, n, m) =  \frac{a_1^2 b_1y}{c\delta q a_2b_2} +\frac{a_2^2 b_2z}{c\delta q a_1b_1} \pm \frac{3 n^{\frac 13} y^{\frac 13}}{\eta_1} \pm \frac{3 m^{\frac 13} z^{\frac 13}}{\eta_2} \pm \frac{2\sqrt{a_1a_2yz}}{c\delta q} .}
We note that the integration over $y, z$ above is $O(MN).$ Hence we obtain that
\est{\frac{\partial g(y, z, n, m)}{\partial y} = B_1 \pm \frac{ n^{\frac 13} }{y^{\frac 23}\eta_1}  \pm \frac{A_1}{ y^{\frac 12}},}
and 
\est{\frac{\partial g(y, z, n, m)}{\partial z} = B_2 \pm  \frac{ m^{\frac 13} }{z^{\frac 23}\eta_2 } \pm \frac{A_2}{ z^{\frac 12}},}
where $A_1 = \frac{\sqrt{a_1a_2z}}{c\delta q}$ and $A_2 = \frac{\sqrt{a_1a_2y}}{c\delta q}.$ We will divide into three cases to consider. 

{\it Case 2.1:}  $a_2^{\frac 32}b_2M^{\frac 12} \geq  4 a_1^{\frac 32}b_1N^{\frac 12}.$ 
For this case, we have that $\left|\frac {A_1}{y^{\frac 12}}  \pm B_1 \right| \asymp \frac {A_1}{y^{\frac 12}},$ and $\left| \frac {A_2}{z^{\frac 12}} \pm B_2 \right| \asymp B_2.$ By similar arguments to case 2 of section \ref{sec:proofE1b}, we consider the ranges $n \asymp (A_1\eta_1)^3N^{\frac 12}$ and $m \asymp (B_2 \eta_2)^3M^2.$ By the same arguments as in Section \ref{sec:proofE1}, we note that
	$$ (A\eta_1^3)N^{\frac 12} \ll \pr{\frac{\sqrt{a_1}}{\delta q^{\frac 12}}}^3 N^{\frac 12} \ll \frac{q^\eps}{\delta^3}, \ \ \ \ (B\eta_2)^3M^2 \ll \frac{(a_2^2b_2)^3M^2}{\delta^3q^3} \ll \frac{q^\eps}{\delta^3}$$
and there are no terms of this from unless $N \gg q^{\frac 32}/a_1^3,$  $M \gg \frac{q^{3/2}}{(a_2^2b_2)^{3/2}}$ and $\delta \ll q^{\eps}.$ Hence the contribution from these terms to $E^+_{5, 4}(\mathbf a, \mathbf b, M, N)$ is 
\est{ & M^{\frac 16}N^{\frac 1{6}} \sum_{\delta < q^\eps} \sum_{ c  \ll \frac{\sqrt{a_1a_2MN}}{\delta q}} \frac{1}{\eta_1^{\frac 12}\eta_2^{\frac 12}} \sum_{h_1 | \eta_1} \sqrt h_1  \sum_{\substack{n \ll q^\eps \\ h_1^2 | n}} \frac{1}{n^{\frac 13}}\sum_{h_2 | \eta_1} \sqrt h_2  \sum_{\substack{m \ll q^\eps \\ h_2^2 | m}} \frac{1}{m^{\frac 13}} \ll a_1^{\frac 12} a_2^{\frac 12}q^{\frac 12 + \eps}. }

{\it Case 2.2:} $a_1^{\frac 32}b_1N^{\frac 12} \geq 4a_2^{\frac 32}b_2M^{\frac 12}.$  For this case, we do the same calculation as in case 2.1 and obtain that the contribution is also $O\pr{a_1^{
\frac 12} a_2^{\frac 12}q^{\frac 12 + \eps}}.$

{\it Case 2.3:}  $\frac 14 a_1^{\frac 32}b_1N^{\frac 12} <  a_2^{\frac 32}b_2M^{\frac 12} <  4 a_1^{\frac 32}b_1N^{\frac 12}.$ For this case, we have that $\frac {A_1}{y^{\frac 12}} \asymp B_1,$ and $\frac {A_2}{z^{\frac 12}} \asymp B_2.$ By similar arguments to case 2 of Section \ref{sec:proofE1b}, we can focus on the ranges $n \asymp (A_1\eta_1)^3N^{\frac 12}$ and $m \asymp (A_1\eta_1)^3N^{\frac 12}$.  The contribution from these terms to $ E^+_{5,4}(\mathbf a, \mathbf b, M, N)$ is then $\ll a_1^{\frac 12} a_2^{\frac 12}q^{\frac 12 + \eps}.$


\section{Conclusion of the proof of Theorem \ref{thm:mainmoment}}
Recall that from (\ref{eqn:HM6}) and (\ref{eqn:MainboundLambda1}), we want to evaluate
	\est{H(0;\al,\bb) \mathcal M_6(q) = \mathscr M_1(q; \al, \bb) + \mathscr M_1(q; \bb, \al).}
By (\ref{decomM1}), we see that $\mathscr M_1(q; \al,\bb) = \mathscr D(q; \al, \bb) + \mathscr K(q; \al, \bb)$, and in Lemma \ref{lem:diagonal}, we showed that 
$$ \mathscr D(q; \al, \bb) = H(0; \al, \bb) \int_{-\infty}^{\infty} \mathcal M(q; \al + it, \bb + it) \> dt + O(q^{-3/4 + \eps}),$$
which is one of the twenty main terms of the asymptotic formula. Then we decomposed $\mathscr K(q; \al, \bb)$ as $\mathscr K_M(q; \al, \bb) + \mathscr K_E(q; \al, \bb)$. We proved in Section \ref{sec:truncationC} that $\mathscr K_E(q; \al, \bb) \ll q^{-1/2 + \eps},$ and then using Voronoi Summation formula, we extracted another nine main terms of the asymptotic formula from $\mathscr K_M(q; \al, \bb)$ with an error term $O(q^{-\frac 14 + \eps})$ (see Proposition \ref{prop:mainTM} and \ref{prop:Terror}, \S \ref{sec:provepropTM}, \S \ref{sec:properror} and Appendix \ref{sec:Eulerverif}). As briefly discussed in \S \ref{sec:setupmoment}, those terms correspond to $\mathcal M(q; \pi(\al) + it, \pi(\bb) + it), $ where $\pi$ is the transposition $ (\alpha_i, \beta_j)$ for $i = 1, 2, 3$ in $S_6/S_3 \times S_3.$ Hence $\mathscr M_1(q; \al, \bb)$ gives ten main terms the desired asymptotic formula, and similarly the remaining ten terms comes from $\mathscr M_1(q; \bb, \al).$

Therefore combining everything together, we have that
\est{H(0;\al,\bb) \mathcal M_6(q) = H(0; \al, \bb) \int_{-\infty}^{\infty} \sum_{\pi \in S_6/S_3 \times S_3} \mathcal M(q; \pi(\al) + it, \pi(\bb) + it) \> dt + O(q^{-1/4 + \eps}).}

If $|\alpha_i - \beta_j| \gg q^{-\eps}$ for all $1\leq i, j \leq 3$, then $H(0; \al, \bb) \gg q^{-\eps}$ and we immediately get 
\est{\mathcal M_6(q) = \int_{-\infty}^{\infty} \sum_{\pi \in S_6/S_3 \times S_3} \mathcal M(q; \pi(\al) + it, \pi(\bb) + it) \> dt + O(q^{-1/4 + \eps}).}
However, since all expressions above - including the term bounded by $O(q^{-1/2+\eps}$) - are analytic in the $\alpha_i$ and $\beta_j$, we see that this in fact holds in general.

\appendix
\section{Comparing  the main term of $ R_{\alpha_1, \beta_1}$ and $\mathcal M(q; \pi(\al), \pi(\bb))$} \label{sec:Eulerverif}

To finish the proof of Proposition \ref{prop:mainTM}, we will show that the local factor at prime $p$ of the Euler product of $\zeta(1 -\alpha_1 + \beta_1 )\mathcal M_{\alpha_1, \beta_1}(0)$ is the same as the one in $\mathcal A \mathcal Z\pr{\tfrac 12; \pi(\al), \pi(\bb)}$, where $(\pi(\al), \pi(\bb)) = (\beta_1, \alpha_1, \alpha_2; \alpha_1, \beta_2, \beta_3)$ and $\mathcal M_{\alpha_1, \beta_1}(s)$ is defined as in (\ref{def:Malbeta}).  To simplify the presentation, we will work within the ring of formal Dirichlet series, so that we need not worry about convergence issues in this section.  Indeed, if we show that $\zeta(1 -\alpha_1 + \beta_1 )\mathcal M_{\alpha_1, \beta_1}(0)$ is the same as  $\mathcal A \mathcal Z\pr{\tfrac 12; \pi(\al), \pi(\bb)}$ as formal series, then they must have the same region of absolute convergence.  Thus, as analytic functions, they agree on the region of absolute convergence, and so must be the same by analytic continuation.  Note that we have already verified that there is a non-empty open region of absolute convergence at the end of \S \ref{sec:provepropTM}.

For notational convenience,  $\al_{2, 3} = (\alpha_2, \alpha_3),$ and $ -\bb_{2,3} = (-\beta_2, -\beta_3)$ in this section. 

\subsection{Euler product at prime $p$ of $\mathcal A \mathcal Z\pr{\tfrac 12; \pi(\al), \pi(\bb)}$} \label{sec:Eulerproduct} We start from rearranging the sums in $\mathcal A \mathcal Z(s; \pi(\al), \pi(\bb))$ by the same method as in (\ref{prod3L}).  When $\textrm{Re}(s + \beta_1 + \alpha_2 + \alpha_3),  \textrm{Re}(s - \alpha_1 - \beta_2 - \beta_3) > 1,$ we recall that from Equations (\ref{def:Cs}) and (\ref{def:mathcalAs}), $\mathcal A \mathcal Z(s; \pi(\al), \pi(\bb))$ is
\est{ 
	  \sumsix_{\substack{a_1, b_1, a_2, b_2, m, n \geq 1\\ a_1n = a_2m \\ a_1b_1 = a_2b_2 \\ (a_i, q) = (b_j, q) = 1 }}  \frac{ \mathscr B(a_1, b_1; \pi(\al))}{(a_1b_1)^{2s}} \frac{ \mathscr B(a_2, b_2; -\pi(\bb))}{(a_2b_2)^{2s}} \frac{\sigma_3(n;\pi(\al))\sigma_3(m; -\pi(\bb))}{(a_1n)^{ s} (a_2m)^{ s}} .}

Using Lemma \ref{lem:multofsigma_k} and the proof of Lemma \ref{prod3L} and using the fact that
\es{\label{eqn:sig3to2}\sigma_3(a; \alpha_1, \alpha_2, \alpha_3) = \sum_{df = a} d^{-\alpha_1} \sigma_2(f; \alpha_2, \alpha_3),}
we see after a change of variables that 

\es{\label{Euler2Conj} \mathcal A \mathcal Z\pr{\tfrac 12; \pi(\al), \pi(\bb)} &= \zeta( 1- \alpha_1 + \beta_1) \sumfour_{\substack{d_1, d_2, e_1, e_2 \geq 1\\ d_1e_1 = d_2 e_2 \\ (d_ie_i, q) = 1}} \frac{1}{d_1^{1  + \alpha_2 + \alpha_3}d_2^{1  - \beta_2 -\beta_3}} \frac{1}{e_1^{1  + \beta_1}e_2^{1  - \alpha_1}} \mathcal J\pr{ e_1, e_2},}
where

\es{ \label{defCalJ}
	\mathcal J( e_1, e_2) &=  \sumtwo_{\substack{ j_1, j_2 \geq 1}}   \frac{\sigma_2(j_1e_1; \altt)\sigma_2(j_2e_2; -\bbtt) (j_1, j_2)^{1  - \alpha_1 + \beta_1}}{j_1^{1  - \alpha_1}j_2^{1  + \beta_1}}.}

Since both $\zeta(1 -\alpha_1 + \beta_1 )\mathcal M_{\alpha_1, \beta_1}(0)$ and $\mathcal A \mathcal Z\pr{\tfrac 12; \pi(\al), \pi(\bb)}$ have the factor $\zeta(1 - \alpha_1 + \beta_1)$, it suffices to consider only the local factor at prime $p$ of  the sum over $d_i, e_i$ in (\ref{Euler2Conj}).  For $p\neq q$, this is 

\es{\label{localpRHS}\sumfour_{\substack{\delta_1, \delta_2, \epsilon_1, \epsilon_2 \geq 0 \\ \delta_1 + \epsilon_1 = \delta_2 + \epsilon_2}}\frac{1}{p^{D + \epsilon_1 + \epsilon_2 + \epsilon_1 \beta_1 - \epsilon_2\alpha_1 }} \sum_{k \geq 0} \frac{\mathcal J_p(\epsilon_1, \epsilon_2, k)}{p^k},}
where $p^{\delta_i} \| d_i$, $p^{\epsilon_i} \| e_i$, $p^{\iota_i} \| j_i,$
\es{ \label{def:D} D := \delta_1 +  \delta_2 + \delta_1(\alpha_2 + \alpha_3) - \delta_2(\beta_2 - \beta_3),} 
and
\es{\label{defJpk} \mathcal J_p(\epsilon_1, \epsilon_2, k) &:=    \sigma_2(p^{k + \epsilon_1}; \altt) \sigma_2 (p^{k + \epsilon_2}; -\bbtt)  + \sum_{0 \leq \iota_1 < k } \frac{\sigma_2(p^{\iota_1 + \epsilon_1}; \altt) \sigma_2 (p^{k + \epsilon_2}; -\bbtt)}{p^{\beta_1(k-\iota_1)}}  \\
	& \hskip 2.5in + \sum_{0 \leq \iota_2 < k } \frac{\sigma_2(p^{k + \epsilon_1}; \altt) \sigma_2 (p^{\iota_2 + \epsilon_2}; -\bbtt)}{p^{ - \alpha_1(k-\iota_2)}}.}
For $p = q$, we have that $\delta_i = \epsilon_i = 0,$ and the local factor at $p$ is 
\es{\label{localqRHS} \sum_{k \geq 0} \frac{\mathcal J_p(0,0, k)}{p^k}.}

We also comment here that when $\alpha_i = \beta_i = 0$ for $i = 1, 2, 3,$ using $\sigma_2(p^k) = k + 1$ in (\ref{localpRHS}), (\ref{defJpk}), (\ref{localqRHS}) and some straightforward calculation,  we derive that the local factor at $p \neq q$ of $\mathcal A \mathcal Z \pr{\tfrac 12; 0 , 0}$ is
$$\left( 1 - \frac 1p\right)^{-9} C_p,$$
where $C_p$ is defined in (\ref{def:c3}), and the local factor at $q$ is
$$ \left( 1 - \frac 1q\right)^{-5}\left( 1 + \frac 4q + \frac 1{q^2}\right). $$
This explains the presence of the arithmetic factor in our Conjecture \ref{conj:CFKRSnoshift}, as in the work \cite{CFKRS}.


\subsection{The Euler product at $p$ of $\mathcal M_{\alpha_1, \beta_1}(0)$} First, by the definition of $\mathscr B(a, b; \alpha_1, \alpha_2, \alpha_3)$ in (\ref{eqn:defB}), $\mathcal G(c, {\bf a}, {\bf b})$ in (\ref{def:Gc}), $\sumsharp \mathscr G_{ {\bf a}, {\bf b}}(s; h, b, c, \gamma, g)$ in (\ref{def:Gscr}), Equation (\ref{eqn:sig3to2}), and a change of variables, we obtain that $\mathcal M_{\alpha_1, \beta_1}(0)$ can be re-written as

\est{ 
& \sumfour_{\substack{d_1, f_1, d_2, f_2 \geq 1\\ (d_if_i, q) = 1}}  \frac{1}{d_1^{1 - \alpha_1 - \beta_1 + \alpha_2 + \alpha_3} d_2^{1 + \alpha_1 + \beta_1 - \beta_2 - \beta_3}} \frac{1}{f_1^{1 - \beta_1} f_2^{1 + \alpha_1}} \\
	& \hskip 0.5in \cdot \sum_{\substack{h \geq 1 \\ h | u_1u_2}} \frac{\mu(h)}{h^{\alpha_1 - \beta_1}} \sum_{\substack{b \geq 1\\ (b, u_1u_2) = 1}} \frac{\mu(b)}{b^{ \alpha_1 - \beta_1 }} \sum_{c \geq 1}  \frac{c}{c^{ \alpha_1 - \beta_1}}  \sum_{\gamma | c}  \frac{1}{\gamma^{\alpha_1 - \beta_1}} \sum_{g | \frac{c}{\gamma}} \frac{\mu(g)}{g^{\alpha_1 - \beta_1}} \prod_{\substack{p| c \\ p \nmid bh\gamma}} \pr{1 - \frac 1p}  \\
	& \hskip 0.5in  \cdot \sum_{n \geq 1}  \sum_{a_1 | f_1} \frac{\mu(a_1)}{a_1^{\alpha_2 + \alpha_3}} \pr{\frac{(a_1, u_2cb) }{a_1u_2cbn}}^{1 - \alpha_1} \sigma_2 \pr{\frac{u_2cbn}{(a_1, u_2cb)} ; \altt} \sigma_2\pr{\frac{f_1}{a_1}; \altt} \\
	& \hskip 0.5in \cdot \sum_{m \geq 1}\sum_{a_2 | f_2} \frac{\mu(a_2)}{a_2^{- \beta_2 - \beta_3}} \pr{\frac{ (a_2, u_1cb)}{a_2u_1cbm}}^{1 + \beta_1}  \sigma_2 \pr{\frac{u_1cbm}{(a_2, u_1cb)} ; -\bbtt}\sigma_2\pr{\frac{f_2}{a_2}; -\bbtt}. }
In Section \ref{sec:calRes}, $u_i = \frac{a_ib_i}{(a_1b_1, a_2b_2)},$ but after changing variables, we write that $u_i  = \frac{f_id_i}{(f_1d_1, f_2d_2)}.$ By comparing Euler products, we can show that 
\est{ 
	\sum_{n \geq 1} \sum_{a_1 | f_1} \frac{\mu(a_1)}{a_1^{\alpha_2 + \alpha_3}} \pr{\frac{(a_1, u_2cb) }{a_1u_2cbn}}^{1 - \alpha_1} \sigma_2 \pr{\frac{u_2cbn}{(a_1, u_2cb)} ; \altt} \sigma_2\pr{\frac{f_1}{a_1}; \altt} = \sum_{n' \geq 1} \frac{\sigma_2(f_1u_2cbn'; \altt)}{(u_2cbn')^{1-\alpha_1}}.}
We also have a similar expression for the sum over $m$ and $a_2.$  Hence we can write $\mathcal M_{\alpha_1, \beta_1}(0)$ as
\es{\label{EulerOff3}& 
	\sumfour_{\substack{d_1, f_1, d_2, f_2  \geq 1\\ (d_if_i, q) = 1}}  \frac{1}{d_1^{1 - \alpha_1 - \beta_1 + \alpha_2 + \alpha_3} d_2^{1 + \alpha_1 + \beta_1 - \beta_2 - \beta_3}} \frac{1}{f_1^{1 - \beta_1} u_2^{1 - \alpha_1} f_2^{1 + \alpha_1} u_1^{1 + \beta_1}} \\
	& \hskip 0.5in \cdot \sum_{\substack{h \geq 1 \\ h | u_1u_2}} \frac{\mu(h)}{h^{\alpha_1 - \beta_1}} \sum_{\substack{b \\ (b, u_1u_2) = 1}} \frac{\mu(b)}{b^{  2  }} \sum_{c}  \frac{1}{c}  \sum_{\gamma | c}  \frac{1}{\gamma^{\alpha_1 - \beta_1}} \sum_{g | \frac{c}{\gamma}} \frac{\mu(g)}{g^{\alpha_1 - \beta_1}} \prod_{\substack{p| c \\ p \nmid bh\gamma}} \pr{1 - \frac 1p}  \\
	& \hskip 0.5in  \cdot \sumtwo_{n, m \geq 1}  \frac{\sigma_2(f_1u_2cbn; \altt)}{n^{1-\alpha_1}} \frac{\sigma_2(f_2u_1cbm; -\bbtt)}{m^{1+\beta_1}}. \\}
We note here that $d_1f_1u_2 = d_2f_2u_1$ by the definition of $u_1, u_2.$ Next, we consider the local factor at $p \neq q$ of (\ref{EulerOff3}), which is of the form
\es{\label{eqn:initialfactorp}\sumfour_{\substack{\delta_1, \delta_2, \xi_1, \xi_2 \geq 0 \\ \delta_1 + \ell_1 = \delta_2 + \ell_2}}\frac{1}{p^{D'(\ell_1, \ell_2) + \ell_2\beta_1 - \ell_1 \alpha_1 - (\xi_1 + \xi_2)(\beta_1 - \alpha_1) }} \mathscr L_p(\delta_1, \delta_2, \xi_1, \xi_2),}
where $p^{\delta_i} \| d_i$, $p^{\xi_i} \| f_i$, $p^{\upsilon_i} \| u_i$, $\ell_1 = \xi_1 + \upsilon_2 $,  $\ell_2 = \xi_2 + \upsilon_1$, $\min\{ \upsilon_1, \upsilon_2\} = 0,$  $D'(\ell_1, \ell_2) = D + (\delta_2 - \delta_1)(\alpha_1 + \beta_1) + \ell_1 + \ell_2$, and $D$ is defined in (\ref{def:D}). We will examine $\mathscr L_p(d_1, d_2, f_1, f_2)$ below but before that analysis, we need the following two Lemmas.

\begin{lem} \label{lem:localfactorgammac} The contribution to the local factor at $p$ from
$$ \sum_{\gamma | c}  \frac{1}{\gamma^{\alpha_1 - \beta_1}} \sum_{g | \frac{c}{\gamma}} \frac{\mu(g)}{g^{\alpha_1 - \beta_1}} \prod_{\substack{p| c \\ p \nmid bh\gamma}} \pr{1 - \frac 1p} 
$$is 1 if $p \nmid c$ or $p | bh$. Otherwise, it is 
	$	1 - \frac{1}{p} + \frac{1}{p^{1+\alpha_1 - \beta_1}}.$
	
\end{lem}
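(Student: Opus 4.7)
The plan is to exploit multiplicativity. Since both $\gamma$ and $g$ divide $c$, the double sum $\sum_{\gamma | c}\sum_{g | c/\gamma}$ factors as a product over the primes dividing $c$ of analogous local sums, and the weight $\prod_{p | c,\, p \nmid bh\gamma}(1-1/p)$ is itself a product of local factors depending only on the $p$-adic data of $c$, $bh$, and $\gamma$. Consequently the entire expression equals $\prod_{p|c} L_p$, and the first assertion that the local factor equals $1$ for $p \nmid c$ is automatic.

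For a prime $p | c$, write $p^\kappa \| c$ with $\kappa \geq 1$, and let $p^{\gamma_p} \| \gamma$ and $p^{g_p} \| g$. Then the local factor is
\begin{equation*}
L_p = \sum_{\gamma_p = 0}^{\kappa} \frac{w_p(\gamma_p)}{p^{\gamma_p(\alpha_1 - \beta_1)}} \sum_{g_p = 0}^{\kappa - \gamma_p} \frac{\mu(p^{g_p})}{p^{g_p(\alpha_1 - \beta_1)}},
\end{equation*}
where $w_p(\gamma_p) = 1 - 1/p$ exactly when $p \nmid bh$ and $\gamma_p = 0$, and $w_p(\gamma_p) = 1$ otherwise. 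Since $\mu(p^{g_p}) = 0$ for $g_p \geq 2$, the inner sum equals $1 - x$ when $\gamma_p < \kappa$ and equals $1$ when $\gamma_p = \kappa$, where $x := p^{-(\alpha_1 - \beta_1)}$.

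The key step is the telescoping identity
\begin{equation*}
(1 - x)\sum_{\gamma_p = 0}^{\kappa - 1} x^{\gamma_p} + x^{\kappa} = 1,
\end{equation*}
which shows that if every $w_p(\gamma_p)$ equalled $1$, then $L_p$ would collapse to $1$. Only the $\gamma_p = 0$ weight is ever different from $1$, so separating that term one obtains $L_p = 1 + (w_p(0) - 1)(1 - x)$. When $p | bh$ this yields $L_p = 1$, and when $p \nmid bh$ it yields $L_p = 1 - (1/p)(1 - x) = 1 - 1/p + p^{-1 - (\alpha_1 - \beta_1)}$, matching the claim.

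The main obstacle is purely organisational: disentangling the three mutually exclusive local regimes ($p \nmid c$; $p | c$ with $p | bh$; $p | c$ with $p \nmid bh$) and keeping track of the M\"obius-induced truncation to $g_p \leq 1$. No analytic input is needed beyond a finite geometric series.
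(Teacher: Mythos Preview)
Your proof is correct and follows essentially the same route as the paper: both reduce to the local sum over $\gamma_p$ and $g_p$, note that the M\"obius factor truncates the inner sum to $1-x$ (or $1$ when $\gamma_p=\kappa$), and then telescope. Your presentation is marginally tidier in that you isolate the $\gamma_p=0$ term via $L_p = 1 + (w_p(0)-1)(1-x)$, whereas the paper writes out the three ranges $\gamma_p=0$, $1\le\gamma_p<c_p$, $\gamma_p=c_p$ separately and sums the resulting geometric pieces by hand; but the content is the same.
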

\begin{proof}
	For $p \nmid c$ or $p | bh$, the contribution to the local factor is 1 because
	$$\sum_{\gamma | c}  \frac{1}{\gamma^{\alpha_1 - \beta_1}} \sum_{g | \frac c\gamma} \frac{\mu(g)}{g^{\alpha_1 - \beta_1}} = \sum_{a | c} \frac{1}{a^{\alpha_1 - \beta_1}  }\sum_{g | a} \mu(g) = 1.$$
	Now suppose $p | c$ and $ p \nmid bh$.  Below we write $p^{c_p} \| c$ and $p^{\gamma_p} \| \gamma.$  Then the contribution to the local factor at $p$ is
	\est{&\pr{ 1 - \frac{1}{p^{\alpha_1 - \beta_1}}}\pr{1 - \frac{1}{p}} + \sum_{1 \leq \gamma_p  < c_p} \frac{1}{p^{\gamma_p(\alpha_1 - \beta_1)}} \pr{ 1 - \frac{1}{p^{\alpha_1 - \beta_1}}} + \frac{1}{p^{c_p(\alpha_1 - \beta_1)}} = 1 - \frac{1}{p} + \frac{1}{p^{1+\alpha_1 - \beta_1}}.}
\end{proof}

\begin{lem} \label{arrPl1l2} Let 
	\es{\label{def:Pcnm} \mathscr P (\ell_1, \ell_2) := \sumthree_{c_p \geq 0, \  n_p, m_p \geq 0} \frac{1}{p^{c_p}}\frac{\sigma_2(p^{\ell_1 + n_p + c_p}; \altt)}{p^{n_p(1-\alpha_1)}} \frac{\sigma_2(p^{\ell_2 + m_p + c}; -\bbtt)}{p^{m_p(1+\beta_1)}}.}
	Then
	$$ \mathscr P (\ell_1, \ell_2) = \sum_{k \geq 0} \frac{\mathcal J_p(\ell_1, \ell_2, k)}{p^k} + \frac{p^{\alpha_1 - \beta_1}}{p^2} \mathscr P (\ell_1 + 1, \ell_2 + 1),$$
	where $\mathcal J_p(\ell_1, \ell_2, k)$ is defined as in (\ref{defJpk}).
	
\end{lem}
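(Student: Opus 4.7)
The plan is to split the triple sum defining $\mathscr P(\ell_1,\ell_2)$ according to whether $\min(n_p, m_p) = 0$ or $\min(n_p, m_p) \geq 1$, and match the two resulting pieces with the two terms on the right-hand side of the claimed identity.

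First I would handle the case $\min(n_p, m_p) \geq 1$. Writing $n_p = n_p' + 1$ and $m_p = m_p' + 1$ and leaving $c_p$ unchanged, the contribution to $\mathscr P(\ell_1,\ell_2)$ becomes
\est{
\frac{1}{p^{1-\alpha_1}} \cdot \frac{1}{p^{1+\beta_1}} \sum_{c_p, n_p', m_p' \geq 0} \frac{1}{p^{c_p}} \frac{\sigma_2(p^{\ell_1+1+n_p'+c_p};\altt)}{p^{n_p'(1-\alpha_1)}} \frac{\sigma_2(p^{\ell_2+1+m_p'+c_p};-\bbtt)}{p^{m_p'(1+\beta_1)}},
}
which is exactly $\dfrac{p^{\alpha_1-\beta_1}}{p^2}\,\mathscr P(\ell_1+1,\ell_2+1)$, giving the recursive term.

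Next I would deal with the case $\min(n_p,m_p)=0$ by decomposing into three disjoint subcases: (a) $n_p=m_p=0$; (b) $n_p=0$, $m_p\geq 1$; (c) $m_p=0$, $n_p\geq 1$. In subcase (a), setting $k=c_p$ gives $\sum_{k\geq 0}p^{-k}\sigma_2(p^{k+\ell_1};\altt)\sigma_2(p^{k+\ell_2};-\bbtt)$, which matches the first (diagonal) term of $\mathcal J_p(\ell_1,\ell_2,k)$. In subcase (b), the change of variables $\iota_1=c_p$, $k=c_p+m_p$ (so $0\leq \iota_1<k$) converts the sum into
\est{
\sum_{k\geq 1} \frac{1}{p^k}\sum_{0\leq\iota_1<k}\frac{\sigma_2(p^{\iota_1+\ell_1};\altt)\,\sigma_2(p^{k+\ell_2};-\bbtt)}{p^{\beta_1(k-\iota_1)}},
}
matching the second term of $\sum_k p^{-k}\mathcal J_p(\ell_1,\ell_2,k)$. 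Subcase (c) is symmetric, using $\iota_2=c_p$, $k=c_p+n_p$, and yields the third term.

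There is no real obstacle here; the argument is essentially a bookkeeping exercise once the correct splitting $\min(n_p,m_p)=0$ versus $\min(n_p,m_p)\geq 1$ is identified. The only care needed is to verify that the exponent $c_p + n_p(1-\alpha_1) + m_p(1+\beta_1)$ in the original summand, after substituting $k=c_p+n_p$ or $k=c_p+m_p$, correctly produces the exponent $k - \alpha_1(k-\iota_2)$ or $k+\beta_1(k-\iota_1)$ appearing in $\mathcal J_p$; this is immediate from a direct calculation. Summing the recursive identity yields the closed form for $\mathscr P(\ell_1,\ell_2)$ needed in the subsequent comparison with $\mathcal A\mathcal Z(\tfrac{1}{2};\pi(\al),\pi(\bb))$.
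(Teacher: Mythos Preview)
Your proposal is correct and follows essentially the same approach as the paper: split the triple sum according to whether $\min(n_p,m_p)=0$ or $\min(n_p,m_p)\geq 1$, shift indices in the latter piece to obtain the recursive term, and in the former piece perform the substitutions $k=c_p$, $k=c_p+m_p$, $k=c_p+n_p$ in the three subcases to recover the three summands of $\mathcal J_p(\ell_1,\ell_2,k)$. The paper's proof is in fact slightly terser, simply writing out the four pieces and then concluding ``after some arrangement''; your explicit changes of variables $\iota_1=c_p$, $k=c_p+m_p$ (and symmetrically) supply exactly that arrangement.
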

\begin{proof} We have
	\est{ \mathscr P (\ell_1, \ell_2) &= \sum_{c_p \geq 0} \frac{\sigma_2(p^{\ell_1  + c_p}; \altt) \sigma_2(p^{\ell_2 + c_p}; -\bbtt)}{p^{c_p}} +  \sumtwo_{c_p \geq 0, \  n_p \geq 1} \frac{\sigma_2(p^{\ell_1 + n_p + c_p}; \altt)\sigma_2(p^{\ell_2  + c_p}; -\bbtt)}{p^{c_p + n_p- n_p\alpha_1}} \\
		& + \sumtwo_{c_p \geq 0, \  m_p \geq 1} \frac{\sigma_2(p^{\ell_1  + c_p}; \altt)\sigma_2(p^{\ell_2 + m_p + c_p}; -\bbtt)}{p^{c_p + m_p + m_p\beta_1}}  + \frac{p^{\alpha_1 - \beta_1}}{p^2} \mathscr P (\ell_1 + 1, \ell_2 + 1) \\
		&= \sum_{k \geq 0} \frac{\mathcal J_p(\ell_1, \ell_2, k)}{p^k} + \frac{p^{\alpha_1 - \beta_1}}{p^2} \mathscr P (\ell_1 + 1, \ell_2 + 1),}
	after some arrangement.
\end{proof}

Now we examine $\mathscr L_p(\delta_1, \delta_2, \xi_1, \xi_2)$ which we separate into two cases below. 

\subsection*{Case 1: $\delta_1 + \xi_1 = \delta_2 + \xi_2$.}
For this case, we have $\upsilon_1 = \upsilon_2 = 0,$ so $\xi_i = \ell_i$. Hence $u_1u_2 = 1$ and $p \not| h.$ From Lemma \ref{lem:localfactorgammac}, we then obtain that
\est{\mathscr L_p(\delta_1, \delta_2, \xi_1, \xi_2) &= \mathscr P (\ell_1, \ell_2) + \frac{1}{p^2} \pr{ p^{\beta_1 - \alpha_1} - 1}\mathscr P (\ell_1 + 1, \ell_2 + 1) - \frac{1}{p^2}\mathscr P (\ell_1 + 1, \ell_2 + 1).  }



From Lemma \ref{arrPl1l2}, $\mathscr L_p(d_1, d_2, f_1, f_2) $ can be written as
\est{
	& \sum_{k \geq 0} \frac{\mathcal J_p(\ell_1, \ell_2, k)}{p^k} + \frac{1}{p^2}\pr{p^{\alpha_1 - \beta_1} - 2 + p^{\beta_1 - \alpha_1}} \mathscr P (\ell_1 + 1, \ell_2 + 1)\\
	&= \sum_{k \geq 0} \frac{\mathcal J_p(\ell_1, \ell_2, k)}{p^k} + \frac{1}{p^2}\pr{p^{\alpha_1 - \beta_1} - 2 + p^{\beta_1 - \alpha_1}}\sum_{k \geq 0} \frac{\mathcal J_p(\ell_1 + 1, \ell_2 + 1, k)}{p^k}  \\
	& \hskip 2in +  \frac{p^{\alpha_1 - \beta_1}}{p^4}\pr{p^{\alpha_1 - \beta_1} - 2 + p^{\beta_1 - \alpha_1}}\mathscr P (\ell_1 + 2, \ell_2 + 2) \\
	&= \sum_{k \geq 0} \frac{1}{p^k} \pg{\mathcal J_p(\ell_1, \ell_2, k) + \pr{p^{\alpha_1 - \beta_1} - 2 + p^{\beta_1 - \alpha_1}} \sum_{1 \leq m_p \leq \lfloor \frac{k}{2}\rfloor}p^{(m_p - 1)(\alpha_1 - \beta_1)} \mathcal J_p(\ell_1 + m_p, \ell_2 + m_p, k - 2m_p)}.}

\subsection*{Case 2: $\delta_1 + \xi_1 \neq \delta_2 + \xi_2$} For this case, $\upsilon_1 + \upsilon_2 \geq 1$. So $p|u_1u_2$, and $b_p = 0$, where $p^{b_p} \| b.$  By Lemma \ref{lem:localfactorgammac} and \ref{arrPl1l2}, we have
\est{&\mathscr L_p(\delta_1, \delta_2, \xi_1, \xi_2) = \pr{1 - \frac{1}{p^{\alpha_1 -\beta_1}}} \mathscr P(\ell_1, \ell_2) - \frac{1}{p^2}\pr{1 - \frac{1}{p^{\alpha_1 -\beta_1}}} \mathscr P(\ell_1 + 1, \ell_2+ 1)\\
	&=  \pr{1 - \frac{1}{p^{\alpha_1 -\beta_1}}} \pg{\sum_{k \geq 0} \frac{\mathcal J_p(\ell_1, \ell_2, k)}{p^k} + \frac{1}{p^2}\pr{p^{\alpha_1 - \beta_1} - 1} \mathscr P (\ell_1 + 1, \ell_2 + 1)}\\
	&= \pr{1 - \frac{1}{p^{\alpha_1 -\beta_1}}}  \left\{ \sum_{k \geq 0} \frac{\mathcal J_p(\ell_1, \ell_2, k)}{p^k} + \frac{1}{p^2}\pr{p^{\alpha_1 - \beta_1} - 1}\sum_{k \geq 0} \frac{\mathcal J_p(\ell_1 + 1, \ell_2 + 1, k)}{p^k} \right. \\
	& \hskip 3in +  \left.\frac{p^{\alpha_1 - \beta_1}}{p^4}\pr{p^{\alpha_1 - \beta_1} - 1}\mathscr P (\ell_1 + 2, \ell_2 + 2) \right\} \\
	&= \pr{1 - \frac{1}{p^{\alpha_1 -\beta_1}}} \sum_{k \geq 0} \frac{1}{p^k} \Bigg\{\mathcal J_p(\ell_1, \ell_2, k)  \\
	& \hskip 1.5in  + \pr{p^{\alpha_1 - \beta_1} - 1} \sum_{1 \leq m_p \leq \lfloor \frac{k}{2}\rfloor}p^{(m_p - 1)(\alpha_1 - \beta_1)} \mathcal J_p(\ell_1 + m_p, \ell_2 + m_p, k - 2m_p) \Bigg\}.}

From both cases, we obtain that (\ref{eqn:initialfactorp}) is 
\es{\label{eqn:secondfactor}&\sumfour_{\substack{\delta_1, \delta_2, \ell_1, \ell_2 \geq 0 \\ \delta_1 + \ell_1 = \delta_2 + \ell_2  }}\frac{1}{p^{D'(\ell_1, \ell_2) - \ell_1\beta_1 + \ell_2\alpha_1 }} \mathscr L_p(\delta_1, \delta_2, \ell_1, \ell_2) \\
	&+ \sumfour_{\substack{\delta_1, \delta_2, \ell_1, \ell_2 \geq 0 \\ \delta_1 + \ell_1 = \delta_2 + \ell_2 }} \pr{ \sum_{\substack{0 \leq \xi_2 < \ell_2 \\ \xi_1 = \ell_1}} \frac{\mathscr L_p(\delta_1, \delta_2, \xi_1, \xi_2)}{p^{(\ell_2- \ell_1)\beta_1 - \xi_2(\beta_1 - \alpha_1) }} + \sum_{\substack{0 \leq \xi_1 < \ell_1 \\ \xi_2 = \ell_2}} \frac{\mathscr L_p(\delta_1, \delta_2, \xi_1, \xi_2)}{p^{(\ell_2- \ell_1)\alpha_1 - \xi_1(\beta_1 - \alpha_1) }}} \frac{1}{p^{D'(\ell_1, \ell_2)}} \\
	&:= \sumtwo_{\delta_1, \delta_2 \geq 0} \mathcal S_p(\delta_1, \delta_2), }
say. 
For fixed $\delta_1, \delta_2$, where $\delta_2 \geq \delta_1$, we rearrange the term $\mathcal S_p(\delta_1, \delta_2)$ and  obtain that

\es{\label{eqn:thirdfactor} S_p(\delta_1, \delta_2) &= \sumtwo_{\substack{ \epsilon_1, \epsilon_2 \geq 0 \\ \delta_1 + \epsilon_1 = \delta_2 + \epsilon_2  }} \sum_{k \geq 0} \frac{\mathcal J_p(\epsilon_1, \epsilon_2, k)}{p^{D'(\epsilon_1, \epsilon_2) + k } } \Bigg\{ \frac{1}{p^{(\epsilon_2 - \epsilon_1)\alpha_1}} + \frac{1}{p^{(\epsilon_2 - \epsilon_1)\beta_1}} - \frac{1}{p^{-\epsilon_1\beta_1 + \epsilon_2\alpha_1}}  \\
	& \hskip 0.5 in +  \pr{p^{\alpha_1 - \beta_1} - 2 + p^{\beta_1 - \alpha_1}}\sum_{1 \leq \ell_2 \leq \epsilon_2  } \frac{p^{(\ell_2 - 1)(\alpha_1 - \beta_1)}}{p^{-(\epsilon_1 - \ell_2)\beta_1 + (\epsilon_2 - \ell_2)\alpha_1 }} \\
	&  \hskip 0.5in + \left. \pr{p^{\alpha_1 - \beta_1} - 1}  \sum_{1 \leq \ell_2 \leq \epsilon_2}  p^{(\ell_2 - 1)(\alpha_1 - \beta_1)} \pr{\frac{1}{p^{(\epsilon_2 - \epsilon_1)\alpha_1}} + \frac{1}{p^{(\epsilon_2 - \epsilon_1)\beta_1}} - \frac{2 p^{\ell_2( \alpha_1 - \beta_1)}}{p^{-\epsilon_1\beta_1 + \epsilon_2\alpha_1}}}  \right\} \\
	&=  \sumtwo_{\substack{ \epsilon_1, \epsilon_2 \geq 0 \\ \delta_1 + \epsilon_1 = \delta_2 + \epsilon_2  }} \sum_{k \geq 0} \frac{\mathcal J_p(\epsilon_1, \epsilon_2, k)}{p^{D + \epsilon_1 + \epsilon_2 + k + \epsilon_1 \beta_1 - \epsilon_2 \alpha_1 } }. }
By similar calculation $\mathcal S_p(\delta_1, \delta_2)$ yields the same value for $\delta_1 > \delta_2.$ In summary from (\ref{eqn:initialfactorp}), (\ref{eqn:secondfactor}) and (\ref{eqn:thirdfactor}), the local factor at $p$ of $\mathcal M_{\alpha_1, \beta_1}$ is 
\est{\sumtwo_{\substack{ \epsilon_1, \epsilon_2 \geq 0\\ \delta_1 + \epsilon_1 = \delta_2 + \epsilon_2  }} \sum_{k \geq 0} \frac{\mathcal J_p(\epsilon_1, \epsilon_2, k)}{p^{D + \epsilon_1 + \epsilon_2 + k + \epsilon_1 \beta_1 - \epsilon_2 \alpha_1 } }, }
which is the same as the local factor at $p$ of $\mathcal A \mathcal Z \pr{\tfrac 12; \pi(\al), \pi(\bb)}$ in (\ref{localpRHS}), as desired. 

For $p = q$, we use similar arguments, with $\delta_i = \epsilon_i = 0$, so that the Euler factor is 
$$\sum_{k \geq 0} \frac{\mathcal J_p(0, 0, k)}{p^{k } }, $$
which is the same as the local factor at $q$ of $\mathcal A \mathcal Z \pr{\tfrac 12; \pi(\al), \pi(\bb)}$ in (\ref{localqRHS}). This completes the proof of Proposition \ref{prop:mainTM}.


\section{Voronoi Summation}\label{sec:voronoi}
In this section, we state the Voronoi Summation formula for the shifted $k$-divisor function defined in (\ref{def:sigma_k}).  The proof of this formula is essentially the same as the proof by Ivic of the Voronoi Summation formula for the $k$-divisor function in \cite{Ivic}, so we will state the results and refer the reader to Ivic for detailed proofs.

Let $\omega$ be a smooth compactly supported function.  For $\al = (\alpha_1,...,\alpha_k)$, let $\sigma_k(n ; \al) = \sigma_k(n; \alpha_1, ..., \alpha_k).$
Define
\est{
S\pr{\frac ac, \al} = \sum_{n= 1}^\infty \sigma_k(n; \al) e\bfrac{an}{c} \omega(n),
}
where $(a, c) = 1$, and the Mellin transform of $\omega$
\est{
\tilde \omega(s) = \int_0^\infty \omega(x) x^{s-1} dx.  
}Since $\omega$ was chosen to be from the Schwarz class, $\tilde \omega$ is entire and decays rapidly on vertical lines.  We have the Mellin inversion formula
\est{
\omega(n) = \frac{1}{2\pi i} \int_{(c)} \tilde \omega(s)  \frac{ds}{n^s},
}where $c$ is any vertical line. Let 
\es{\label{def:Ak} A_k&\pr{m, \frac ac, \al} \\
 &= \frac 12 \sum_{\substack{m_1,..., m_k \geq 1 \\ m_1...m_k = m}} m_1^{\alpha_1}...m_k^{\alpha_k} \sum_{a_1 \mod c} ... \sum_{a_k \mod c} \pg{\e{\frac{aa_1...a_k + \ab \cdot \mathbf m}{c}} + \e{\frac{-aa_1...a_k + \ab \cdot \mathbf m}{c}} },}
and

\es{\label{def:Bk} B_k&\pr{m, \frac ac, \al} \\
 &= \frac 12 \sum_{\substack{m_1,..., m_k \geq 1 \\ m_1...m_k = m}} m_1^{\alpha_1}...m_k^{\alpha_k} \sum_{a_1 \mod c} ... \sum_{a_k \mod c} \pg{\e{\frac{aa_1...a_k + \ab \cdot \mathbf m}{c}} - \e{\frac{-aa_1...a_k + \ab \cdot \mathbf m}{c}} }.}
Moreover, we define
\est{G_k(s, n, \al) = \frac{c^{ks}}{\pi^{ks}n^s}\pr{\prod_{\ell = 1}^k \frac{\Gamma\pr{\frac{s-\alpha_\ell}{2}}}{\Gamma\pr{\frac{1 - s+\alpha_\ell}{2}}}}, \ \ \ \ \ H_k(s, n, \al) = \frac{c^{ks}}{\pi^{ks}n^s}\pr{\prod_{\ell = 1}^k \frac{\Gamma\pr{\frac{1+s -\alpha_\ell}{2}}}{\Gamma\pr{\frac{2 - s+\alpha_\ell}{2}}}},  }
and for $0 < \sigma < \frac 12 - \frac 1k + \frac{\sum_{\ell = 1}^{k} \textrm{Re}(\alpha_\ell)}{k},$ we let
\es{\label{def:UandV} U_k(x; \al) = \frac{1}{2\pi i} \int_{(\sigma)} \prod_{\ell = 1}^k \frac{\Gamma\pr{\frac{s-\alpha_\ell}{2}}}{\Gamma\pr{\frac{1 - s+\alpha_\ell}{2}}} \frac{ds}{x^s}, \ \ \ \ \ V_k(x; \al) = \frac{1}{2\pi i} \int_{(\sigma)} \prod_{\ell = 1}^k \frac{\Gamma\pr{\frac{1+ s-\alpha_\ell}{2}}}{\Gamma\pr{\frac{2 - s+\alpha_\ell}{2}}} \frac{ds}{x^s}.}

We note that by Stirling's formula, both integrals for $U_k$ and $V_k$ are absolutely convergent.

Finally, we define the Dirichlet series to be
\est{
	D_k \pr{s, \frac ac, \al} = \sum_n \frac{\sigma_k(n; \al) e\bfrac{an}{c}}{n^s},
}
which converges absolutely for $\tRe s > 1$.  
We have that
\es{\label{eqn:OriginalD} 
	D_k \pr{s, \frac ac, \al}
	&= \sum_{n_1,...,n_k} \frac{e\bfrac{an_1n_2...n_k}{c}}{n_1^{s+\alpha_1}...n_k^{s+\alpha_k}} \\
	&= \frac{1}{c^{ks+\alpha_1+...+\alpha_k}}\sum_{a_1,...,a_k \bmod c}e\bfrac{aa_1...a_k}{c} \prod_{j=1}^k \zeta\left(s+\alpha_j, \frac{a_j}{c}\right),
}
where $ \zeta \left(s, \frac ac\right)$ is the Hurwitz zeta function defined for $\tRe s > 1$ as
\es{ \label{def:hurwitz}
	\zeta \left(s, \frac ac\right) = \sum_{n=0}^\infty \frac{1}{\lr{n+\frac ac}^s}.
}

The Hurwitz zeta function may be analytically continued to all of $\mathbb C$ except for a simple pole at $s = 1$. Therefore, $D_k\left(s, \tfrac ac \right)$ can be analytically continued to all of $\mathbb C$ except for a simple pole at $s = 1 - \alpha_j$ for $j = 1,.., k.$

\begin{thm} \label{thm:voronoi} With notations as above and $(a, c) = 1$ and $\alpha_i \ll \frac{1}{1000k\log (|c| + 100)}, $  we have
\est{
S\pr{\frac ac, \al} &= \sum_{\ell = 1}^{k} \Res_{s = 1 - \alpha_\ell} \tilde \omega(s) D_k\pr{s, \frac ac, \al} \\
& + \frac{\pi^{k/2 + \alpha_1 + ... + \alpha_k}}{c^{k+\alpha_1 + ... + \alpha_k}}\sum_{n = 1}^{\infty} A_k\pr{n, \frac ac, \al} \int_0^{\infty} \omega(x) U_k\pr{\frac{\pi^knx}{c^k}; \al} \> dx\\
& + i^{3k}\frac{\pi^{k/2 + \alpha_1 + ... + \alpha_k}}{c^{k+\alpha_1 + ... + \alpha_k}}\sum_{n = 1}^{\infty} B_k\pr{n, \frac ac, \al} \int_0^{\infty} \omega(x) V_k\pr{\frac{\pi^knx}{c^k}; \al} \> dx.}
\end{thm}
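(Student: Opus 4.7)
The plan is to follow Ivic's Voronoi summation argument for the unshifted $k$-divisor function in \cite{Ivic}, adapting the bookkeeping to accommodate the shifts $\alpha_1,\dots,\alpha_k$. The starting point is Mellin inversion: since $\tilde\omega$ decays rapidly on vertical lines, for any $c_0 > 1 + \max_j|\tRe \alpha_j|$ the rearrangement
\est{
S\pr{\tfrac ac,\al} = \frac{1}{2\pi i}\int_{(c_0)} \tilde\omega(s)\, D_k\pr{s,\tfrac ac,\al}\,ds
}
is valid. Using the Hurwitz representation \eqref{eqn:OriginalD}, $D_k$ is meromorphic on $\mathbb{C}$ with only the $k$ simple poles at $s=1-\alpha_\ell$, and has polynomial growth on vertical strips away from these poles. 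I would then shift the contour to a line $\tRe s = \sigma_0$ chosen sufficiently far to the left; crossing the poles produces exactly the residue sum in the first line of the theorem.

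To convert the remaining $s$-integral into the dual sums, I would apply the Hurwitz zeta functional equation
\est{
\zeta\pr{s+\alpha_j,\tfrac{a_j}{c}} = \frac{\Gamma(1-s-\alpha_j)}{(2\pi c)^{1-s-\alpha_j}} \sum_{m_j\ge 1} \frac{e^{\pi i(1-s-\alpha_j)/2}\,e\pr{\tfrac{a_j m_j}{c}} + e^{-\pi i(1-s-\alpha_j)/2}\,e\pr{\tfrac{-a_j m_j}{c}}}{m_j^{1-s-\alpha_j}}
}
to each of the $k$ factors in \eqref{eqn:OriginalD}. Expanding the resulting product yields $2^k$ terms indexed by sign vectors $\epsilon\in\{\pm 1\}^k$. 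The change of variable $a_j\mapsto \epsilon_j a_j$ in the outer sum over $a_j\bmod c$ shows that the relevant exponential depends only on $\prod_j \epsilon_j \in \{\pm 1\}$, so the $2^k$ contributions collapse into two groups. After collecting the trigonometric coefficients $\cos(\pi(1-s)/2)$ and $\sin(\pi(1-s)/2)$ (with the $\alpha_j$ absorbed into a common factor), these two groups are exactly the exponential sums $A_k(n,a/c,\al)$ and $B_k(n,a/c,\al)$ with $n=m_1\cdots m_k$, the weight $m_1^{\alpha_1}\cdots m_k^{\alpha_k}$ matching \eqref{def:Ak} and \eqref{def:Bk}.

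What then remains, for each $n$, is an $s$-integral of $\tilde\omega(s)$ against a product of $k$ gamma factors and trigonometric factors times $(\pi^k n/c^k)^{-s}$. Applying the reflection and duplication formulas, the combinations $\cos(\pi(1-s)/2)\prod_j\Gamma(1-s-\alpha_j)$ and $\sin(\pi(1-s)/2)\prod_j\Gamma(1-s-\alpha_j)$ rewrite respectively as constant multiples of $\prod_j \Gamma((s-\alpha_j)/2)/\Gamma((1-s+\alpha_j)/2)$ and $\prod_j \Gamma((1+s-\alpha_j)/2)/\Gamma((2-s+\alpha_j)/2)$, matching the integrands in \eqref{def:UandV}; the explicit constants account precisely for the prefactor $\pi^{k/2+\alpha_1+\cdots+\alpha_k}/c^{k+\alpha_1+\cdots+\alpha_k}$ and the phase $i^{3k}$. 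Recognising each resulting $s$-integral as a Mellin--Parseval pairing of $\omega$ with $U_k$ or $V_k$ yields the last two lines of the theorem.

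The main obstacle is justifying the contour shifts and the interchanges of sum and integral: the dual $m$-sums only approach absolute convergence as we move the contour far to the left, while the original Dirichlet series demands $\tRe s$ large. To handle this I would first work on an intermediate line where both sides converge absolutely, then use the rapid decay of $\tilde\omega$ on vertical lines together with Stirling-type bounds on the gamma products to push the contour to $\tRe s = \sigma_0$. The smallness hypothesis $\alpha_j \ll 1/(1000k\log(|c|+100))$ guarantees a clean separation between the line $\tRe s > 1+\max_j|\tRe\alpha_j|$ of absolute convergence of the original series, the poles at $s=1-\alpha_\ell$, and the dual strip of absolute convergence needed after applying the Hurwitz functional equation. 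Apart from this technical bookkeeping, the argument is parallel to Ivic's, which is why the paper quotes the statement and refers the reader to \cite{Ivic} for the detailed estimates.
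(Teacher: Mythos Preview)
Your proposal is correct and follows exactly the route the paper indicates: the paper gives no independent argument but simply refers the reader to Ivi\'c's proof of the unshifted Voronoi formula in \cite{Ivic}, of which your Mellin inversion, contour shift past the poles at $s=1-\alpha_\ell$, Hurwitz functional equation, and regrouping into $A_k$, $B_k$, $U_k$, $V_k$ is the natural shifted adaptation.
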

We refer the reader to the proof of Theorem 2 in \cite{Ivic} for details.  Next, we collect properties of $A_3(n, a/c, \al)$, $B_3(n, a/c, \al)$, $U_3(x; \al)$ and $V_3(x; \al)$. These are useful for bounding error terms of $\mathcal M_6(q).$

\begin{lemma} \label{lem:A3B3Kloos}
	Let $a, n, \gamma$ be integers such that $(a, \gamma) = 1$. Moreover $A_3(n, a/c, \al)$, $B_3(n, a/c, \al)$ are defined as in (\ref{def:Ak}) and (\ref{def:Bk}). Then
	\est{\sum_{n_1n_2n_3 = n}& n_1^{\alpha_1}n_2^{\alpha_2}n_3^{\alpha_3} \sumthree_{r_1, r_2, r_3 \mod \gamma} \e{\frac{ar_1r_2r_3 + r_1n_1 + r_2n_2 + r_3n_3}{\gamma}} \\
		&= \gamma \sum_{h | \gamma, h^2 | n} h \Delta(n, h, \gamma)  S \pr{\frac{n}{h^2}, -\overline{a}, \frac{\gamma}{h}},}
	where $\Delta(n, h, \gamma)$ is a divisor function satisfying $\Delta(n, h, \gamma) \ll (\gamma n)^\eps.$
	Moreover, 
	\est{A_3\pr{n, \frac{a}{\gamma}, \al} \ll (\gamma n)^\eps\gamma^{\frac 32} \sum_{h | \gamma, h^2 | n} \sqrt h,  \ \ \ \ \ \ \ B_3\pr{n, \frac{a}{\gamma}, \al} \ll (\gamma n)^\eps\gamma^{\frac 32} \sum_{h | \gamma, h^2 | n} \sqrt h.} 
\end{lemma}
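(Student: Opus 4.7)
The plan is to evaluate the inner triple exponential sum explicitly via the delta-trick, identify it with classical Kloosterman sums, and then invoke Weil's bound. Write
\[
T(n_1,n_2,n_3) := \sum_{r_1,r_2,r_3 \bmod \gamma} e\!\left(\frac{a r_1 r_2 r_3 + r_1 n_1 + r_2 n_2 + r_3 n_3}{\gamma}\right),
\]
so that $A_3$ and $B_3$ are linear combinations of $T(n_1,n_2,n_3)$ and its analogue with $a$ replaced by $-a$, weighted by $n_1^{\alpha_1}n_2^{\alpha_2}n_3^{\alpha_3}$ and summed over $n_1n_2n_3 = n$. First I would sum over $r_3$ as a geometric sum, which yields $\gamma$ if $\gamma \mid ar_1r_2 + n_3$ and $0$ otherwise, so
\[
T = \gamma \sum_{\substack{r_1,r_2 \bmod \gamma\\ a r_1 r_2 \equiv -n_3 \,(\gamma)}} e\!\left(\frac{r_1 n_1 + r_2 n_2}{\gamma}\right).
\]

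Next I would parametrize $r_1 = hs$ with $h = (r_1,\gamma)$ and $(s,\gamma/h)=1$. Since $(a,\gamma)=1$, the congruence $ahs r_2 \equiv -n_3 \pmod{\gamma}$ has solutions iff $h \mid n_3$, in which case it reduces to $r_2 \equiv -\overline{as}(n_3/h) \pmod{\gamma/h}$, with $h$ lifts modulo $\gamma$. Summing $e(r_2 n_2/\gamma)$ over these lifts produces a factor $h \cdot \mathbf{1}_{h \mid n_2}$, giving
\[
T = \gamma \sum_{h \mid (\gamma,n_2,n_3)} h\, S\!\left(n_1,\, -\bar a\,\tfrac{n_2 n_3}{h^2};\, \tfrac{\gamma}{h}\right).
\]
Using $S(\lambda m, \nu; c) = S(m, \lambda \nu; c)$ when $(\lambda, c)=1$, on the bulk of the contribution this collapses to $S(n/h^2, -\bar a; \gamma/h)$, independent of the particular factorization. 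Re-ordering the sum over factorizations $n = n_1n_2n_3$ with $h \mid n_2$ and $h \mid n_3$ (which forces $h^2 \mid n$), and absorbing the harmless factors $n_j^{\alpha_j} \ll n^{\varepsilon}$ (since $\alpha_j \ll 1/\log(\gamma+100)$) together with the residual gcd bookkeeping into a divisor-type function $\Delta(n,h,\gamma) \ll (\gamma n)^{\varepsilon}$, one obtains the first claim.

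For the pointwise bounds, I would apply Weil's bound $|S(m,n;c)| \ll c^{1/2}(m,n,c)^{1/2}\tau(c)$ to the Kloosterman factor in each summand, yielding $|T(n_1,n_2,n_3)| \ll \gamma^{3/2+\varepsilon}\sum_{h\mid(\gamma,n_2,n_3)} h^{1/2}$. Summing over factorizations and swapping the order,
\[
\sum_{n_1n_2n_3 = n}\ \sum_{h\mid(\gamma,n_2,n_3)} h^{1/2} = \sum_{\substack{h \mid \gamma\\ h^2 \mid n}} h^{1/2}\,\tau_3(n/h^2) \ll n^{\varepsilon} \sum_{\substack{h \mid \gamma\\ h^2 \mid n}} h^{1/2},
\]
which yields the stated bound for both $A_3$ and $B_3$. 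The main obstacle is the first step: the careful bookkeeping of the gcd $h = (r_1,\gamma)$ needed to express the triple exponential sum as a superposition of classical Kloosterman sums of reduced moduli $\gamma/h$; once this decomposition is in place, both the identification and the Weil-plus-divisor bound in the second half are essentially mechanical.
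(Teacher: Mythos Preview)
The paper does not prove this lemma itself but refers to Ivi\'c's equations (8.7)--(8.9); your delta-trick reduction of the triple sum to classical Kloosterman sums of modulus $\gamma/h$ followed by Weil's bound is exactly that computation. The only points to tighten are that the gcd factor $(n_1,\,n_2n_3/h^2,\,\gamma/h)^{1/2}$ from Weil (which you correctly state but then drop) and the cases where the Kloosterman sum does not literally collapse to $S(n/h^2,-\bar a;\gamma/h)$ both require a line of justification, but these are routine and are precisely what the unspecified divisor-type function $\Delta(n,h,\gamma)$ in the statement is meant to absorb.
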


The proof of this lemma can be found in Equations (8.7)-(8.9) in \cite{Ivic}.

\begin{lemma} \label{lem:asymptUandV}
If $U(x; \al) := U_3(x; \al)$ and $V(x; \al) := V_3(x; \al),$ as defined in (\ref{def:UandV}), and $\alpha_i \ll \frac{1}{1000k\log (|c| + 100)}, $
then for any $0 < \eps < 1/6$ and $x > 0,$ we have
\es{\label{lem:asympUVxsmall}
U(x; \al) \ll x^{\eps}, \hskip 1in V(x; \al) \ll x^{\eps}.}
Moreover for any fixed integer $K \geq 1$ and $x \geq x_0 > 0,$
\es{\label{lem:asymptUxbig}
U(x; \al) = \sum_{j = 1}^K \frac{c_j \cos(6x^{\frac 13}) + d_j \sin (6x^{\frac 13 })}{x^{\frac j3 + \frac{\alpha_1 + \alpha_2 + \alpha_3}{3}}} + O\pr{\frac 1{x^{\frac {K+1}{3} + \frac{\alpha_1 + \alpha_2 + \alpha_3}{3}}}},}
 \es{\label{lem:asymptVxbig}
V(x; \al) = \sum_{j = 1}^K \frac{e_j \cos(6x^{\frac 13}) + f_j \sin (6x^{\frac 13})}{x^{\frac j3 + \frac{\alpha_1 + \alpha_2 + \alpha_3}{3} } } + O\pr{\frac 1{x^{\frac {K+1}{3} + \frac{\alpha_1 + \alpha_2 + \alpha_3}{3}}}},}
with suitable constants $c_j,..., f_j$, and $c_1 = 0,  d_1 = -\frac{2}{\sqrt {3\pi}}$ $e_1 =-\frac{2}{\sqrt {3\pi}}, f_1 = 0 $.  
\end{lemma}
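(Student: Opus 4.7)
The plan is to prove both parts by contour manipulation in the Mellin--Barnes representations (\ref{def:UandV}), combined with Stirling's formula and the method of stationary phase. Throughout, I use the smallness hypothesis $\alpha_\ell \ll 1/(1000 k \log(|c|+100))$ to treat the shifts as essentially bounded perturbations.

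For the bound (\ref{lem:asympUVxsmall}), I would first apply Stirling's formula to the integrand on a vertical line $\mathrm{Re}\,s = \sigma$. For $s = \sigma + it$ with $|t|$ large, the imaginary parts of $(s-\alpha_\ell)/2$ and $(1-s+\alpha_\ell)/2$ are each $\pm t/2 + O(1)$, so the exponential factors $e^{-\pi|t|/4}$ from numerator and denominator of $\Gamma((s-\alpha_\ell)/2)/\Gamma((1-s+\alpha_\ell)/2)$ cancel, leaving
\[
\left|\frac{\Gamma((s-\alpha_\ell)/2)}{\Gamma((1-s+\alpha_\ell)/2)}\right| \asymp (1+|t|)^{\sigma - \tfrac{1}{2} - \mathrm{Re}\,\alpha_\ell}.
\]
Hence the full integrand is bounded by $(1+|t|)^{3\sigma - \tfrac{3}{2} - \mathrm{Re}(\alpha_1+\alpha_2+\alpha_3)}\,x^{-\sigma}$, which is absolutely integrable for $\sigma < \tfrac{1}{6} + \mathrm{Re}(\alpha_1+\alpha_2+\alpha_3)/3$. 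Shifting the contour from the original $\sigma_0$ to $\sigma = \eps$ with $0 < \eps < 1/6$, the tiny shifts $\alpha_\ell$ ensure that the poles of $\Gamma((s-\alpha_\ell)/2)$ at $s = \alpha_\ell$ lie to the left of (or cross with bounded residues compared to) the new line, giving the desired power bound on $U$. The identical argument, with $(s-\alpha_\ell)/2$ replaced by $(1+s-\alpha_\ell)/2$, handles $V$.

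For the asymptotic expansions (\ref{lem:asymptUxbig})--(\ref{lem:asymptVxbig}), the plan is to reduce the Mellin kernel to a form amenable to stationary phase. Combining the duplication formula $\Gamma(z)\Gamma(z+\tfrac{1}{2}) = 2^{1-2z}\sqrt{\pi}\,\Gamma(2z)$ with the reflection formula $\Gamma(z)\Gamma(1-z) = \pi/\sin(\pi z)$, a short computation yields
\[
\prod_{\ell=1}^{3}\frac{\Gamma((s-\alpha_\ell)/2)}{\Gamma((1-s+\alpha_\ell)/2)} = \frac{2^{3-3s+\alpha_1+\alpha_2+\alpha_3}}{\pi^{3/2}}\prod_{\ell=1}^{3}\Gamma(s-\alpha_\ell)\cos\!\left(\frac{\pi(s-\alpha_\ell)}{2}\right),
\]
and the analogous identity with each cosine replaced by a sine for the kernel of $V$. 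Writing $\cos(\pi(s-\alpha_\ell)/2) = \tfrac{1}{2}(e^{i\pi(s-\alpha_\ell)/2} + e^{-i\pi(s-\alpha_\ell)/2})$ and expanding the triple product yields eight integrals with phases $e^{i\pi s \epsilon/2}$ for $\epsilon \in \{-3,-1,1,3\}$. Applying Stirling's formula to $\prod_{\ell}\Gamma(s-\alpha_\ell)$ and equating derivatives to locate the saddles, one finds stationary points at $s_{\star} = (8x)^{1/3}e^{-i\pi \epsilon/6}$. The terms with $\epsilon = \pm 1$ have $\mathrm{Re}\,s_\star = \sqrt{3}\,x^{1/3} > 0$, and along the steepest-descent contour these yield contributions of order $e^{-3\sqrt{3}\,x^{1/3}}$ which are negligible. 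The dominant contribution comes from $\epsilon = \pm 3$, where $s_\star = \mp 2i x^{1/3}$ and the leading factor $e^{-3s_\star} = e^{\pm 6ix^{1/3}}$ is purely oscillatory; pairing these complex-conjugate saddles produces the expected $\cos(6x^{1/3})$ and $\sin(6x^{1/3})$. Iterating the stationary-phase expansion gives the full asymptotic series, with the $x^{-(\alpha_1+\alpha_2+\alpha_3)/3}$ prefactor arising from the shift dependence in $\prod_\ell \Gamma(s_\star - \alpha_\ell)$ and $2^{\alpha_1+\alpha_2+\alpha_3}$. Direct evaluation of the Gaussian $\sqrt{2\pi/|\Phi''(s_\star)|}$ at the two leading saddles, together with the explicit prefactors $8/\pi^{3/2}$, determines the constants $c_1 = 0$, $d_1 = -2/\sqrt{3\pi}$, $e_1 = -2/\sqrt{3\pi}$, $f_1 = 0$.

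The main obstacle is the careful bookkeeping through the stationary-phase expansion, especially verifying that the small shifts $\alpha_\ell$ enter only through the uniform prefactor $x^{-(\alpha_1+\alpha_2+\alpha_3)/3}$ at each order while leaving the phase $6x^{1/3}$ intact. Since the unshifted case is treated in detail by Ivic \cite{Ivic} and the shifts $\alpha_\ell \ll 1/\log(|c|+100)$ induce only a regular perturbation of the saddle location and Gaussian coefficients, the argument reduces to transcribing Ivic's computation with the shifts carried through, a routine but lengthy expansion.
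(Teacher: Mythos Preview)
Your approach is correct and coincides with the paper's, which simply refers the reader to Ivi\'c's proof of the unshifted case (Lemma~3 of \cite{Ivic}) and notes that the shifted version is a minor modification. Your sketch is precisely that modification: Stirling on vertical lines for the crude bound, and the duplication/reflection rewriting followed by stationary phase for the asymptotic expansion, with the shifts $\alpha_\ell$ carried through as a regular perturbation of Ivi\'c's computation.
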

The proof of this lemma is a minor modification of the proof of Lemma 3 in \cite{Ivic}.

\section*{Acknowledgment} Vorrapan Chandee acknowledges support from Coordinating Center for Thai Government Science
and Technology Scholarship Students (CSTS) and
National Science and Technology Development Agency (NSTDA) grant of year 2014.
Part of this work was done while the first author was visiting Mathematical Institute, University of Oxford - she is grateful for their kind hospitality.


\begin{thebibliography}{9999}

\bibitem{CL} V. Chandee and Xiannan Li, {\it The eighth moment of Dirichlet $L$-functions}, Adv. Math. 259 (2014), 339-375.

\bibitem{CFKRS} J.B. Conrey, D. Farmer, J.Keating, M. Rubinstein, and N.Snaith, {\it Integral moments of L-functions,}
Proc. London Math. Soc. 91 (2005), 33-104.

\bibitem{CGh} J.B. Conrey and A. Ghosh, {\it A conjecture for the sixth power moment of the Riemann zeta-function.} Internat. Math. Res. Notices 1998, no. 15, 775 - 780.


\bibitem{CGo} J.B. Conrey and S.M. Gonek, {\it High moments of the Riemann zeta-function,} Duke Math. J. 107 (2001), no. 3, 577 - 604.

\bibitem{CIS} J.B. Conrey, H. Iwaniec and K. Soundararajan, {\it The sixth power moment of Dirichlet $L$-functions},  Geometric and Functional Analysis, 
October 2012, Volume 22, Issue 5, 1257-1288.

\bibitem{Da} H. Davenport, {\it Multiplicative Number Theory}, vol.74, Springer-Verlag (GTM), New York, 2000.

\bibitem{Dj} G. Djankovi\'{c}, {\it The sixth moment of the family of $\Gamma_1(q)$-automorphic $L$-functions}, Arch. Math. 97 (2011), 535 - 547.

\bibitem{DGH} A. Diaconu, D. Goldfeld, and J. Hoffstein, {\it Multiple Dirichlet series and moments of zeta and $L$-functions.} Compositio Math. 139 (2003), 297-360.

\bibitem{GR} I. S. Gradshteyn and I. M. Ryzhik, {\it Table of Integrals, Series, and Products}, Edited by A.Jeffrey and D. Zwillinger. Academic Press, New York, 7th edition, 2007.

\bibitem{Har} A. Harper, {\it Sharp conditional bounds for moments of the Riemann zeta function.}  Preprint: http://arxiv.org/abs/1305.4618.

\bibitem{Ivic} A. Ivi\'{c}, {\it On the ternary additive divisor problem and the sixth moment of the zeta-function}, Sieve methods,
exponential sums, and their applications in number theory (Cardiff, 1995), London Math. Soc. Lecture Note Ser., vol. 237, Cambridge Univ. Press, Cambridge, 1997, 205 - 243.


\bibitem{IK} H. Iwaniec and E. Kowalski, {\it Analytic Number Theory}, vol. 53, American Mathematical Society Colloquium Publications, Rhode Island, 2004.

\bibitem{IL} H.Iwaniec and Xiaoqing Li, {\it The orthogonality of Hecke eigenvalues}, Compos. Math. 143 (2007), p.541-565.

\bibitem{KaSa} N. Katz and P. Sarnak, {\it Random matrices, Frobenius eigenvalues, and monodromy.} American Mathematical Society Colloquium Publications, 45. American Mathematical Society, Providence, RI, 1999.

\bibitem{KS} J.P. Keating and N.C. Snaith, {\it Random matrix theory and $\zeta(\frac 12 + it),$} Comm.  Math. Phys. 214 (2000), 57-89. 

\bibitem{KS2} J.P. Keating and N.C. Snaith, {\it Random matix theory and $L$-functions at $s = 1/2$} Comm. Math. Phys. 214 (2000), 91 - 110.



\bibitem{Lower} Z. Rudnick and K. Soundararajan, {\it Lower bounds for moments of L-functions,} Proc. Natl. Acad. Sci. USA 102 (2005), 6837-6838.

\bibitem{Ob} F. Oberhettinger, {\it Tables of Bessel transforms}, Springer, Berlin, 1972.


\bibitem{Sound} K. Soundararajan, {\it Moments of the Riemann zeta-function}, Ann. of Math. (2) 170 (2009) 981 - 993.


\bibitem{SoundDiriMoment} K. Soundararajan, {\it The fourth moment of Dirichlet L-functions}, Analytic number theory, 239-246, Clay Math. Proc., 7, Amer. Math. Soc., Providence, RI, 2007.

\bibitem{Ti} E.C. Titchmarsh, {\it The Theory of the Riemann Zeta-function}, Oxford University Press, New York, 1986. 

\bibitem{Watt}  G. N. Watson, {\it A treatise on the theory of Bessel functions}, Cambridge University Press, Cambridge 1944.
\end{thebibliography}
\end{document}